 \journalname{}
\begin{document}

\title{Compactification d'espaces de repr{\'e}sentations\\ 
de groupes de type fini.%
\thanks{avec le soutien de l'ANR Repsurf :  ANR-06-BLAN-0311}
}

\titlerunning{Compactification d'espaces de repr{\'e}sentations}        

\author{Anne Parreau}

\institute{Institut Fourier, Universit{\'e} Grenoble I et CNRS,
BP 74,
38402 Saint-Martin-d'H{\`e}res cedex, France.\\
\email{Anne.Parreau@ujf-grenoble.fr}
}

\date{}

\maketitle

\selectlanguage{english}

\begin{abstract}
Let $\Ga$ be a finitely generated group and $G$ be a noncompact
semisimple connected real Lie group with finite center. 
We consider the space $\XsepGaG$ of conjugacy classes of
semisimple representations of $\Ga$ into $G$, which is the maximal
Hausdorff quotient of $\Hom(\Ga,G)/G$.
 We define the {\it translation vector} of $g\in G$, with value in a
Weyl chamber, as a natural refinement of the translation
 length of $g$ in the symmetric space associated with
 $G$.
We construct a compactification of $\XsepGaG$, induced by the marked
translation vector spectrum, generalizing Thurston's compactification
of the Teichm\"uller space.
We show that the boundary points are projectivized marked translation
vector spectra of actions of $\Ga$ on affine buildings with no global
fixed point.
An analoguous result holds for  any reductive group $G$ over a local
field. 

\keywords{
Moduli spaces of representations 
\and Higher teichm{\"u}ller theory 
\and Reductive groups 
\and Symmetric spaces
\and Euclidean buildings
\and Asymptotic cones
}
\end{abstract}

\selectlanguage{francais}

\section{Introduction}

\paragraph{Espaces de repr{\'e}sentations.}
Soit $\Ga$ un groupe infini de type fini.
Soit $G$ un groupe de Lie semisimple r{\'e}el, connexe, non compact, de
centre fini.
On s'int{\'e}resse {\`a} l'espace $\RGaG=\Hom(\Ga,G)$ des repr{\'e}sentations
de $\Ga$ dans $G$,  muni de la topologie de la convergence simple
et de l'action de $G$ par conjugaison.
 L'espace topologique quotient $\RGaGsG$ n'{\'e}tant pas s{\'e}par{\'e},
 on le remplace par son plus gros quotient s{\'e}par{\'e}
 $\XsepGaG=\RGaGssG$, que l'on peut d{\'e}crire  de la
 mani{\`e}re suivante (voir section
 \ref{ss- espace de rep et quotient}, et \cite{ParRepcr}).
On consid{\`e}re l'action de $G$ par isom{\'e}tries sur son espace sym{\'e}trique
(sans facteur compact) $\Es$ associ{\'e}. On note $\bordinf\Es$ le bord {\`a}
l'infini de $\Es$.
Une repr{\'e}sentation $\rho:\Ga\fleche G$ est dite
{\em compl{\`e}tement r{\'e}ductible} (cr) si
pour tout  $\alpha\in\bordinf\Es$ fix{\'e} par $\rho$,  il existe
$\beta\in\bordinf \Es$ fix{\'e} par $\rho$ et oppos{\'e} {\`a} $\alpha$ (ie. joint
{\`a} $\alpha$ par une g{\'e}od{\'e}sique dans $\Es$).
Cette propri{\'e}t{\'e} g{\'e}om{\'e}trique (introduite par J.P~Serre \cite{SerreCR})
est {\'e}quivalente aux notions alg{\'e}briques classiques de r{\'e}ductivit{\'e} ou
semisimplicit{\'e}.

L'espace $\XsepGaG$ s'identifie alors naturellement au sous-espace $\RcrGaGsG$ de
 $\RGaGsG$ form{\'e} par les classes de repr{\'e}sentations cr. 
Cet espace est consid{\'e}r{\'e} de mani{\`e}re classique 
dans la litt{\'e}rature. Il est l{\'e}g{\`e}rement ``plus gros'' que
le quotient alg{\'e}bro-g{\'e}om{\'e}trique usuel
(voir section \ref{ss- espace de rep et quotient}).
On s'int{\'e}resse aussi  plus sp{\'e}cialement
 au sous-espace 
$\XsepfdGaG$ de $\XsepGaG$ form{\'e} des classes de
repr{\'e}sentations fid{\`e}les et discr{\`e}tes.

\paragraph{Quelques exemples.}
Si $\Ga$ est le groupe fondamental d'une surface $\Sf$ connexe
orien\-t{\'e}e ferm{\'e}e, de genre $g\geq 2$, et $G=\PSL_2(\RR)$, alors $\Es$
est le plan hyperbolique $\HH^2$, et l'espace de Teichm{\"u}ller $\Teich$
des structures hyperboliques (marqu{\'e}es) sur $\Sf$
 s'identifie naturellement {\`a} l'une des
composantes connexes de $\XsepGaG$, incluse dans $\XsepfdGaG$.
Si $\Ga=\pi_1(\Sf)$ et $G=\PSLn(\RR)$, alors d'apr{\`e}s \cite{Hitchin}
et \cite{Labourie} la composante de Hitchin
de l'espace des modules de $G$-fibr{\'e}s plats sur $\Sf$ 
est 
une composante connexe de $\XsepGaG$, incluse dans $\XsepfdGaG$,
 et c'est
une cellule de dimension $(2g-2)\dim G$.
Pour $n=3$ Goldman et Cho{\"\i} ont montr{\'e} dans \cite{ChGo} qu'elle
s'identifie {\`a}  l'espace des structures projectives r{\'e}elles
convexes (marqu{\'e}es) sur la surface $\Sf$.
Par contre, si $\Ga$ est un r{\'e}seau irr{\'e}ductible d'un groupe de Lie $H$
semisimple r{\'e}el, connexe, sans facteurs compacts, de centre fini, avec
$\rang(H)\geq 2$,
le th{\'e}or{\`e}me de super-rigidit{\'e} de
Margulis \cite{Margulis}  entra{\^\i}ne que  l'espace
$\Xsep(\Ga,\SLn(\CC))$ est fini pour tout $n$
(voir aussi \cite{Bass}).
Les r{\'e}seaux des groupes de Lie r{\'e}els semisimples (connexes, sans
facteurs compacts, de centre fini) $H$ de rang $1$ non localement
isomorphes {\`a} $\PSL_2(\RR)$ (comme $\SO_0(n,1)$ pour $n\geq 3$) n'ont
pas de d{\'e}formations non triviales en des r{\'e}seaux dans $H$ par les
r{\'e}sultats de rigidit{\'e} de Mostow \cite{Mostow}. Par contre ils ont dans
certains cas de riches espaces de d{\'e}formations fid{\`e}les et discr{\`e}tes
dans d'autres groupes $G$, correspondant par exemple aux structures
conformes ($G=\SO_0(n+1,1)$) et projectives r{\'e}elles
($G=\PGL_{n+1}(\RR)$) marqu{\'e}es sur une vari{\'e}t{\'e} $M$ de groupe
fondamental $\Ga$ (voir par exemple \cite{JoMi}).

\paragraph{Objectif.}
W.~Thurston a construit une compactification de l'espace de Teichm{\"u}ller
(voir par exemple \cite{FLP}).
Plus g{\'e}n{\'e}ralement, Morgan et Shalen (voir \cite{MoSh}, \cite{Morgan},
\cite{Chiswell}) puis Bestvina \cite{Bestvina} et Paulin
\cite{PauInv}, ont g{\'e}n{\'e}ralis{\'e} la compactification de Thurston pour
 $\XsepfdGaG$ avec $G=\SO_0(n,1)$.
Les points du bord proviennent d'actions de $\Ga$ sur des {\em arbres
r{\'e}els}, {\`a} ``petits'' (i.e. virtuellement cycliques) stabilisateurs
d'ar{\^e}tes. L'un des outils fondamentaux est le {\em spectre marqu{\'e} des
 longueurs}.
Le but de ce travail est de g{\'e}n{\'e}raliser cette compactification pour
$G$ de rang sup{\'e}rieur ou {\'e}gal {\`a} $2$.
La possibilit{\'e} d'une telle compactification a {\'e}t{\'e} envisag{\'e}e auparavant
par de nombreuses personnes (Gromov, Paulin \cite{PauHab},
Kleiner-Leeb, \ldots).
En rang sup{\'e}rieur ou {\'e}gal {\`a} $2$, il convient de remplacer les arbres
r{\'e}els par une cat{\'e}gorie plus g{\'e}n{\'e}rale d'espaces m{\'e}triques, les {\em
immeubles affines}, ou encore {\em euclidiens}.
Expliquons maintenant plus en d{\'e}tail le r{\'e}sultat pr{\'e}sent{\'e} dans cet
article. 

Pour compactifier $\XsepGaG$, on introduit (en section \ref{s- VdT}) un
raffinement de la notion de longueur de
translation, le {\em  vecteur de translation}.
On se fixe une chambre de Weyl ferm{\'e}e $\Cb$ de $\Es$. 
On consid{\`e}re la projection naturelle $\Cd:\Es\times\Es\fleche\Cb$,
qui raffine la distance usuelle.
Le {\em vecteur de translation} $v(g)\in\Cb$ de $g\in G$ est
l'unique vecteur de longueur minimale dans l'adh{\'e}rence de 
${\{\Cd(x,gx),\ x\in \Es\}}$.
Alg{\'e}briquement, c'est la ``projection de Jordan'' de $g$ \cite{Benoist}.
Pour $G=\SLn(\RR)$, cet invariant de conjugaison est {\'e}gal {\`a} la suite
d{\'e}croissante des logarithmes des modules des valeurs propres de $g$. 
On consid{\`e}re l'application suivante 
$$\V :
\begin{array}{ccl}
\XsepGaG & \fleche & \CGa\\[0ex]
[\rho] & \mapsto &v\circ\rho
\end{array} \; .$$
Soit $\PCGa$ le projectifi{\'e} de $\CGa$, c'est-{\`a}-dire l'espace topologique
 quotient de $\CGao$ par multiplication dans $\RRpet$, et  $\PP: \CGao
 \fleche \PCGa$ la projection canonique.
On d{\'e}montre alors le th{\'e}or{\`e}me suivant (voir th{\'e}or{\`e}me \ref{theo- comp}
pour un {\'e}nonc{\'e} plus pr{\'e}cis).

\begin{theointro} \label{theointro- interpretation points du bord}
L'application continue $$\PP\circ\V :\XsepGaG - \V^{-1}(0) \fleche
\PCGa$$ induit une compactification $\XseptGaG$ de $\XsepGaG$, dont le
bord $\bordinf\XsepGaG\subset \PCGa$ est form{\'e} de points de la forme
$x=[\vro]$, o{\`u} $\rho$ est une action de $\Ga$ sur un immeuble affine
(pas n{\'e}cessairement discret), sans point fixe global.

L'action du groupe $\Out(\Ga)$ des automorphismes ext{\'e}rieurs
de $\Ga$ sur $\XsepGaG$ s'{\'e}tend naturellement {\`a} $\XseptGaG$.
\end{theointro}

Si  $\Ga$ n'a pas de sous-groupe d'indice fini
poss{\'e}dant un sous-groupe ab{\'e}lien distingu{\'e} infini 
(par exemple si $\Ga$ est un sous-groupe discret fortement
irr{\'e}\-duc\-tible de $\SLn(\RR)$),
alors le sous-espace $\XsepfdGaG$ des classes de repr{\'e}sentations fid{\`e}les et
discr{\`e}tes est un ferm{\'e} de $\XsepGaG$ (section
\ref{ss- comp Xfd}).
La compactification  de $\XsepGaG$ cons\-trui\-te induit donc une
compactification naturelle $\XsepfdtGaG$ de $\XsepfdGaG$.
Dans ce cas, F.~Paulin a d{\'e}montr{\'e} que les actions $\rho$ dont
proviennent les points du bord sont {\`a} stabilisateurs de germes
d'appartements virtuellement r{\'e}solubles \cite{PauDg}.

\medskip

En particulier, lorsque $\Ga$ est un groupe de surface et
$G=\PSLn(\RR)$, ceci donne une compactification de la composante de
Hitchin.
On a {\'e}tudi{\'e} dans \cite{ParTh} des exemples de d{\'e}g{\'e}n{\'e}rescences de
structures projectives r{\'e}elles (cas o{\`u} $n=3$) par
pliage, et calcul{\'e} les spectres limites correspondants.

\paragraph{Sur la structure de la preuve.}
Les actions sur des immeubles affines sont obtenues par passage {\`a} des
c{\^o}nes asymptotiques, comme dans \cite{PauDg} et \cite{KaLe}.
L'essentiel de la difficult{\'e} consiste ici, premi{\`e}rement, {\`a} d{\'e}montrer la
continuit{\'e} du vecteur de translation par passage {\`a} un c{\^o}ne
asymptotique (Prop. \ref{prop- passage au cone}), et deuxi{\`e}mement, que
le spectre limite obtenu est non nul, fait crucial pour cette
compactification.

Cette derni{\`e}re propri{\'e}t{\'e} est obtenue en utilisant un mod{\`e}le alg{\'e}brique
pour le c{\^o}ne asymptotique.  On peut en effet voir 
l'action limite comme
provenant d'une repr{\'e}sentation $\rho:\Ga\fleche\SLn(\KKom)$, 
agissant sur son immeuble de Bruhat-Tits, o{\`u}
$\KKom$ est un corps valu{\'e},  ni localement compact, ni {\`a} valuation
discr{\`e}te, obtenu {\`a} partir de $\RR$ par un proc{\'e}d{\'e} de passage {\`a} un
c{\^o}ne asymptotique (en utilisant un ultrafiltre $\om$).
Cette construction est d{\'e}taill{\'e}e dans la section \ref{s- modele
alg de Esom}.
La non nullit{\'e} du spectre d{\'e}coule alors de l'absence de point fixe
global par \cite{ParEll}.

\bigskip

On obtient {\'e}galement les r{\'e}sultats analogues
dans le cadre des groupes r{\'e}ductifs sur les corps locaux non
archim{\'e}diens. 

\section{Pr{\'e}liminaires}

Cette section est consacr{\'e}e {\`a} des pr{\'e}liminaires g{\'e}om{\'e}triques. On
introduit les notations utilis{\'e}es et on rappelle ou on {\'e}tablit les
propri{\'e}t{\'e}s dont on aura besoin, 
d'abord pour les espaces m{\'e}triques $\CAT(0)$ (section \ref{ss-
  CAT(0)}), puis plus sp{\'e}cifiquement pour les espaces sym{\'e}triques et
les immeubles affines (section \ref{ss- prel ES IMM}). 
Enfin on rappelle la notion de c{\^o}ne asymptotique d'un espace m{\'e}trique
et les propri{\'e}t{\'e}s des c{\^o}nes asymptotiques des espaces sym{\'e}triques et
des immeubles qu'on utilisera (section \ref{ss- prel conas}).

\subsection{Espaces m{\'e}triques  $\CAT(0)$}
\label{ss- CAT(0)}

Dans cette section $\Es$ est un espace m{\'e}trique $\CAT(0)$.
On renvoie  {\`a} \cite{BrHa} comme r{\'e}f{\'e}rence g{\'e}n{\'e}rale.
Etant donn{\'e}s $x,y\in\Es$, on notera $[x,y]$ ou bien $xy$
l'unique segment g{\'e}od{\'e}sique joignant $x$ {\`a} $y$.
Etant donn{\'e}s  $x, y, z\in \Es$ avec $y,z\neq x$, on notera
$\angletilde_x(y,z)$ l'angle de comparaison en $x$  entre $y$ et $z$, 
et $\angle_x(y,z)$ l'angle en $x$ entre $y$ et $z$.
L'espace $\Es$ se d{\'e}compose canoniquement en produit $\Es_0\times
\Es'$, o{\`u} $\Es_0$ est  euclidien et $\Es'$ est un espace
m{\'e}trique $\CAT(0)$ sans facteur euclidien.
Les isom{\'e}tries de $\Es$ pr{\'e}servent cette d{\'e}composition \cite[Thm
  II.6.15]{BrHa}.
On appellera $\Es_0$ le {\em facteur euclidien (maximal)} de $\Es$.

Le groupe $\Isom(\Es)$ des isom{\'e}tries de $\Es$ est muni de la
topologie de la convergence uniforme sur les born{\'e}s (qui est
m{\'e}trisable).
Lorsque $\Es$ est propre (i.e.~si ses boules ferm{\'e}es sont compactes)
 $\Isom(\Es)$ est localement compact, et agit 
proprement sur $\Es$ (c'est-{\`a}-dire que si $x\in\Es$ et
$M\geq 0$, alors $\{g\in\Isom(\Es),\ d(x,gx)\leq M\}$ est un compact).

\paragraph{Faisceaux.}
Soit $r:\RR\fleche\Es$ une g{\'e}od{\'e}sique.
Le {\em faisceau} $\Fr$ 
est la r{\'e}union des g{\'e}o\-d{\'e}\-si\-ques parall{\`e}les {\`a} $r$.  
C'est un sous-espace  convexe et il se
d{\'e}compose  \cite[Thm II.2.14]{BrHa} canoniquement en un produit
$\Cr\times r$, o{\`u} $\Cr\subset\Es$ est un convexe et $\{x\}\times r$ est
une g{\'e}od{\'e}sique parall{\`e}le {\`a} $r$ pour tout $x\in\Cr$.
Si $\Es$ est complet, alors $\Fr$ et $\Cr$ aussi.

\subsubsection{D{\'e}placement et longueur de translation \cite[II.6]{BrHa}.}
Soit $g\in\Isom(\Es)$.
On appelle fonction {\em de d{\'e}placement} de $g$ la fonction convexe
$$\fonction{d_g}{\Es}{\RRp}{x}{d(x,gx)}$$  
et  {\em longueur de translation} de
$g$ le r{\'e}el positif
$$\ell(g)=\inf_{x\in \Es}d_g(x).$$
L'{\em ensemble minimal} de $g$  est  le convexe ferm{\'e}
({\'e}ven\-tu\-el\-lement vide) 
$$\Min(g)=\{x\in\Es,\ d(x,gx)=\ell(g)\}\;.$$ 
On dit que $g$ est {\em semi-simple} si $\Min(g)\neq\emptyset$, 
{\em parabolique} sinon.  
Dans le premier cas, ou bien si $\ell(g)=0$ et $g$ poss{\`e}de un point
fixe ($g$ est alors dite {\em
elliptique})  ou bien $\ell(g)>0$, et
translate alors une g{\'e}od{\'e}sique  ($g$ est alors dite {\em
axiale}). 
Si $g$ envoie une g{\'e}od{\'e}sique $r$ sur une g{\'e}od{\'e}sique parall{\`e}le,
alors le faisceau $\Fr=\Cr\times r$ est stable par $g$ et $g$ agit
dessus  comme $(\gp,t)$ o{\`u} $\gp\in\Isom(\Cr)$
et $t\in\RR$ d{\'e}signe la translation $r(s)\mapsto r(s+t)$ de $r$.
On a alors $\Min(g)=\Min(\gp)\times r \subset \Fr$ et
\begin{equation}
\label{eq- faisceau stable et ell(g)}   
\ell(g)=\ell(g_{|\Fr})=\sqrt{\ell(\gp)^2+t^2} \; .
\end{equation}

On montre facilement que pour tout $x\in \Es$ la limite
$\lim_{k\tend\pinfty}\frac{1}{k} d(x, g^kx)$ existe et ne d{\'e}pend pas
de $x\in\Es$ (voir \cite[II.6.6]{BrHa}). Comme $\frac{1}{k} d(x,
g^kx)\leq d_g(x)$ pour tout $k$, on a de plus 
\begin{equation}
\label{eq- d(x, g^k x)}
\lim_{k\tend\pinfty}\frac{1}{k} d(x, g^kx)\leq \ell(g)
\end{equation}
avec {\'e}galit{\'e}  si $g$ semisimple.
On obtient ais{\'e}ment les propri{\'e}t{\'e}s suivantes.

\begin{lemm} 
\label{lemm- Min(gk) reste a distance bornee}
Soit $\gk\tend  g$ dans $\Isom(\Es)$. 

\begin{enumerate}
\item  \label{item- dgk tend vers dg}
On a $d_{\gk}\tend d_g$ uniform{\'e}ment sur les born{\'e}s de $\Es$.

 \item \label{item- limsup lgk leq lg} (semicontinuit{\'e})
On a  $\limsup_k\ell(\gk) \leq \ell(g)$.

 \item \label{item- lim Min(gk) est inclus dans Min(g)}
Soit $M$ l'ensemble des valeurs d'adh{\'e}rence des suites $(\xk)_k$
dans $\Es$ telles que $\xk\in\Min(\gk)$ pour tout $k\in\NN$. 
Alors $M\subset \Min(g)$.
\end{enumerate}
 
En particulier, si $\Es$ est propre et si $\Min(\gk)$ est non vide 
et reste {\`a} distance born{\'e}e de $\xo\in\Es$ fix{\'e}, alors quitte {\`a}
extraire $\Min(\gk)$ converge 
(pour la topologie de Hausdorff point{\'e}e \cite{GromovMetric}) 
vers un sous-ensemble non vide
$N$ de $\Min(g)$.
En particulier $g$ est semisimple et $\ell(g)=\lim_k \ell({\gk})$.
\qed \end{lemm}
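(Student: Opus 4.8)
The plan is to prove the three numbered items in order, each being used in the next, and then to derive the final assertion from them together with properness. Throughout I use that each displacement function $d_g$ is $2$-Lipschitz, hence continuous: since $g$ is an isometry, $d(x,gx)\leq 2d(x,y)+d(y,gy)$, so $\abs{d_g(x)-d_g(y)}\leq 2d(x,y)$.

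For \textbf{(1)} I would argue directly from the triangle inequality: for every $x\in\Es$,
\[
\abs{\dgk(x)-d_g(x)}=\abs{d(x,\gk x)-d(x,gx)}\leq d(\gk x,gx),
\]
so the supremum over a bounded set $B$ is at most $\sup_{x\in B}d(\gk x,gx)$, which tends to $0$ by the very definition of convergence in $\Isom(\Es)$. For \textbf{(2)}, fix $\eps>0$ and pick $x$ with $d_g(x)\leq\ell(g)+\eps$; since $\lgk\leq\dgk(x)\to d_g(x)$ by (1), we get $\limsup_k\lgk\leq\ell(g)+\eps$, and $\eps$ is arbitrary. For \textbf{(3)}, let $x$ be a cluster value of some $(\xk)$ with $\xk\in\Min(\gk)$, say $\xk\to x$ along a subsequence. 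This subsequence is bounded, so by (1) we have $d_g(\xk)=\dgk(\xk)+o(1)=\lgk+o(1)$ along it; using continuity of $d_g$ and (2),
\[
d_g(x)=\lim d_g(\xk)=\lim\lgk\leq\limsup_k\lgk\leq\ell(g)
\]
along this subsequence. Since $d_g(x)\geq\ell(g)=\inf d_g$ always holds, equality follows and $x\in\Min(g)$, proving $M\subset\Min(g)$.

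For the final assertion, assume $\Es$ proper and $d(\xo,\Min(\gk))\leq M$ for all $k$. On a proper space the closed subsets form a compact space for the pointed Hausdorff topology \cite{GromovMetric}, so after extraction $\Min(\gk)\to N$ for some closed $N$. Every point of $N$ is a limit of a bounded sequence $\zk\in\Min(\gk)$, hence lies in $\Min(g)$ by (3); thus $N\subset\Min(g)$. Conversely, choosing $\yk\in\Min(\gk)$ with $d(\xo,\yk)\leq M+1$ and extracting (properness) gives a cluster value, which belongs to $N$; so $N\neq\emptyset$ and $g$ is semisimple. For $\ell(g)=\lim_k\lgk$, (2) gives $\limsup_k\lgk\leq\ell(g)$; taking a subsequence realizing $\liminf_k\lgk$, extracting $\yk\to y\in\Min(g)$ as above, and writing $\lgk=\dgk(\yk)=d_g(\yk)+o(1)\to d_g(y)=\ell(g)$ identifies the liminf with $\ell(g)$, so the limit exists and equals $\ell(g)$.

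The computations in (1)--(3) are routine; the one point needing care is this last step, where one keeps the base point and the compact region $\bar B(\xo,M+1)$ fixed throughout the extractions and argues with $\liminf/\limsup$ along suitable subsequences in order to upgrade the subsequential statements to a genuine limit for the whole sequence.
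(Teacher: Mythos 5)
Your proof is correct, and it is exactly the routine verification the paper has in mind: the lemma is stated with no proof at all (``On obtient ais\'ement les propri\'et\'es suivantes''), so your argument simply fills in the details the author considered immediate. All the steps check out, including the two points that genuinely need care --- using the uniform convergence on bounded sets (rather than mere pointwise convergence) when evaluating $d_{\gk}$ along the moving points $\xk$, and the $\liminf/\limsup$ bookkeeping that upgrades the subsequential extraction to convergence of the full sequence $\ell(\gk)\to\ell(g)$.
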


\subsubsection{Bord a l'infini.}
On suppose d{\'e}sormais $\Es$ complet.
On notera $\bordinf \Es$ le bord ({\`a} l'infini) de $\Es$ et $\ov{\Es}=\Es\cup\bordinf \Es$.
Si $r:\RRp\fleche\Es$ est un rayon g{\'e}od{\'e}sique, on note $r(\infty)$ le
point de $\bordinf\Es$ d{\'e}fini par $r$, qu'on appellera l'{\em
  extr{\'e}mit{\'e}} de $r$.

\begin{defi}[Ombre $\OmbxyD x y D$ d'une boule]
\label{def- ombre}
Soit $x\in\ov \Es$.  Pour tout $y\in\Es$, et pour tout $D\geq 0$, on
notera $\OmbxyD x y D$ l'{\em ombre vue du point $x$ de la boule
  $B(y,D)$}, c'est-{\`a}-dire l'ensemble des $r(\pinfty)$, o{\`u} $r$ est une
g{\'e}od{\'e}sique issue de $x$ telle que $d(y,r)\leq D$.
\end{defi}

L'espace $\ov{\Es}$ est muni de la topologie des
c{\^o}nes \cite[II.8.6]{BrHa}. 
L'action de $\Isom(\Es)$ s'{\'e}tend contin{\^u}ment sur $\ov{\Es}$.
Soient $\xi,\xi'\in\ov{\Es}$.
Pour $x\in\Es$, on notera $\angle_x(\xi,\xi')$ l'angle en $x$ entre
$\xi$ et $\xi'$.
On notera $\angleTits(\xi,\xi')=\sup_{x\in \Es}\angle_x(\xi,\xi')$
l'{\em angle de Tits}, qui est une m{\'e}trique sur $\bordinf\Es$ 
(voir II 9.4  et 9.5 dans \cite{BrHa}).
Si $\xipp$ et $\xim$ sont les deux extr{\'e}mit{\'e}s d'une g{\'e}od{\'e}sique $r$,
on dira que $\xipp$ et $\xim$ sont {\em  oppos{\'e}s} 
et on notera $\Fximp=\Fr$ (alors $\angleTits(\xim,\xipp)=\pi$).

\subsubsection{Quelques propri{\'e}t{\'e}s des isom{\'e}tries relatives au bord {\`a} l'infini.}

\begin{prop}[Points fixes {\`a} l'infini d'une isom{\'e}trie axiale]
\label{prop- points fixes isom axiale} 
Soit $g\in\Isom(\Es)$  translatant (non trivialement) une g{\'e}od{\'e}sique
$r$.  Notons $\xim=r(\minfty)$ et $\xipp=r(\pinfty)$. 
Si $g$ fixe  $\xi\in\bordinf\Es$, alors
$\angleTits(\xim,\xi)+\angleTits(\xi,\xipp)=\pi$. 
En particulier, 
si $\xi$ est oppos{\'e} {\`a} $\xim$, alors $\xi=\xipp$.
\end{prop}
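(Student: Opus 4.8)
The plan is to encode the two Tits angles as the two asymptotic slopes of a single affine function along the axis, using the Busemann function $b_\xi$ centred at $\xi$. First I would fix $o=r(0)$ and normalise $b_\xi$ so that $b_\xi(o)=0$. Since $g$ fixes $\xi$, the function $x\mapsto b_\xi(gx)$ is again a Busemann function centred at $\xi$, hence differs from $b_\xi$ by a constant: there is $\kappa\in\RR$ with $b_\xi(gx)=b_\xi(x)+\kappa$ for all $x\in\Es$. Orienting $r$ so that $g\cdot r(s)=r(s+t)$ with $t=\ell(g)>0$, one gets $b_\xi(r(nt))=b_\xi(g^n o)=n\kappa$ for every $n\in\ZZ$.

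Next I would show that $s\mapsto b_\xi(r(s))$ is affine of slope $\kappa/t$. It is convex, being the restriction of the convex function $b_\xi$ to a geodesic, and it agrees with the affine map $s\mapsto(\kappa/t)s$ on the lattice $t\ZZ$; a convex function on $\RR$ coinciding with an affine function on $t\ZZ$ must equal it everywhere (writing any $s$ as a convex combination of two straddling lattice points bounds $b_\xi(r(s))$ above by the affine value, while inserting a lattice point between $s$ and a farther lattice point bounds it below). Moreover $|\kappa/t|\le 1$, since $b_\xi$ is $1$-Lipschitz and $r$ has unit speed.

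The crux is the identification of this slope with a cosine of a Tits angle: for any unit-speed ray $c$ with endpoint $\eta=c(\infty)$ one has $\lim_{s\to\infty}b_\xi(c(s))/s=-\cos\angleTits(\xi,\eta)$. Granting this, I apply it to the two rays $s\mapsto r(s)$, with endpoint $\xipp$, and $s\mapsto r(-s)$, with endpoint $\xim$, whose slopes are $\kappa/t$ and $-\kappa/t$; this yields $\cos\angleTits(\xi,\xipp)=-\kappa/t$ and $\cos\angleTits(\xi,\xim)=\kappa/t$, whence $\cos\angleTits(\xi,\xim)=-\cos\angleTits(\xi,\xipp)$. As both angles lie in $[0,\pi]$, where cosine is injective, this is exactly $\angleTits(\xim,\xi)+\angleTits(\xi,\xipp)=\pi$. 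For the final assertion, if $\xi$ is opposite to $\xim$ then $\angleTits(\xim,\xi)=\pi$, so $\angleTits(\xi,\xipp)=0$, and since $\angleTits$ is a metric on $\bordinf\Es$ this forces $\xi=\xipp$.

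I expect the slope--cosine formula to be the main obstacle. The inequality $\limsup_s b_\xi(c(s))/s\le-\cos\angleTits(\xi,\eta)$ follows from $\CAT(0)$ comparison (the Euclidean model triangle built on a ray to $\xi$ and on $c$ gives precisely this bound at the basepoint, and the bound can only improve as the basepoint is moved), whereas the reverse inequality requires realising the Tits angle asymptotically, that is, the characterisation of $\angleTits$ as a supremum of comparison angles attained in the limit along the rays. All the remaining ingredients --- convexity and Lipschitz control of $b_\xi$, the constant-shift property under $g$, and injectivity of cosine on $[0,\pi]$ --- are elementary, so the whole difficulty is concentrated in that single comparison-geometry equality.
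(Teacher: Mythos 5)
Your proof is correct, but it takes a genuinely different route from the paper's. The paper argues directly with angles along the axis: by \cite[Prop. II.9.8]{BrHa} one has $\angleTits(\xim,\xi)=\lim_{t\to\minfty}\angle_{r(t)}(\xim,\xi)$; since each $g^n$ fixes $\xim$ and $\xi$ and translates $r$, this angle function is periodic in $t$, hence the Tits angle is already attained at $x=r(0)$ and $y=r(\ell(g))$, and likewise $\angleTits(\xi,\xipp)=\angle_{x}(\xi,\xipp)$; the equality then drops out of the two pointwise inequalities $\angle_{x}(\xim,\xi)+\angle_{x}(\xi,\xipp)\geq\pi$ and $\angle_{x}(\xi,y)+\angle_{y}(x,\xi)\leq\pi$. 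You instead encode the two Tits angles as the asymptotic slopes of the Busemann function $b_\xi$ along the axis, via the cocycle identity $b_\xi\circ g=b_\xi+\kappa$ and the convexity-plus-periodicity argument forcing affineness; these auxiliary steps are all valid in the paper's setting ($\Es$ is assumed complete in this subsection, so Busemann functions of asymptotic rays do differ by constants). Your crux, the slope--cosine formula, is a true and standard CAT(0) fact, but note that both of its halves --- not only the reverse inequality --- rest on the characterization of $\angleTits$ as the monotone limit of comparison angles (\cite[II.9.16]{BrHa}): for a unit-speed ray $c$ with $c(\infty)=\eta$ and $\rho$ the ray from $c(0)$ to $\xi$, the law of cosines gives the exact identity $b_\xi(c(s))=-s\cos\bigl(\lim_{t}\angletilde_{c(0)}(c(s),\rho(t))\bigr)$, and the monotone convergence of these comparison angles to $\angleTits(\xi,\eta)$ yields both inequalities simultaneously; so a self-contained proof of that lemma costs roughly as much as the paper's entire proof. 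In exchange, your approach gives a sharper quantitative conclusion --- $b_\xi$ is exactly affine on the axis, with $\kappa=-\ell(g)\cos\angleTits(\xi,\xipp)$ --- whereas the paper's argument is shorter, quotes only statements taken verbatim from \cite{BrHa}, and yields the extra pointwise information that the Tits angles are realized as Alexandrov angles at every point of the axis.
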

\begin{preuve} 
On a $\angleTits(\xim,\xi)=\lim \angle_{r(t)}(\xim,\xi)$ 
quand $t \tend \minfty$, voir  \cite[Prop. II.9.8]{BrHa}.
 Notons  $\ell=\ell(g)$, et $x=r(0)$ et $y=r(\ell)$.
En faisant agir $g^n$ 
pour  $n\in\ZZ$
 on obtient que
$\angle_{x}(\xim,\xi)=\angle_{y}(\xim,\xi)=\angleTits(\xim,\xi)$
et $\angleTits(\xi,\xipp)=\angle_{x}(\xi,\xipp)$.
Or 
$\angle_{x}(\xim,\xi) + \angle_{x}(\xi,\xipp)
\geq \pi$ et
$\angle_{x}(\xi,y)+\angle_{y}(x,\xi) \leq \pi$, ce qui conclut.
\end{preuve}

\begin{defi}[Points attractif et r{\'e}pulsif]
\label{defi- pf attractif} 
  On dira que $g\in\Isom(\Es)$ a des {\em points fixes
    attractif} et {\em r{\'e}pulsif} {\em {\`a} l'infini} s'il existe
  $\xippg,\ximg\in\bordinf\Es$  oppos{\'e}s (n{\'e}\-c{\'e}ssai\-re\-ment uniques) 
  tels que pour un (tout) $x\in\Es$ on ait $g^k x \tend \xippg$ et
  $g^{-k} x \tend \ximg$ quand $k \tend \pinfty$. Alors $\xippg$ et
  $\ximg$ sont clairement fix{\'e}s par $g$.
\end{defi}

\begin{rema}
\label{rema- pf attractif ES et IMM}
Si $g$ est axiale, alors  les extr{\'e}mit{\'e}s d'un axe de $g$ conviennent.   
Si $\Es$ est un  immeuble affine complet, ils existent donc d{\`e}s que
$\ell(g)>0$ par le th{\'e}or{\`e}me \ref{theo- immob}. On verra que c'est
{\'e}galement le cas lorsque $\Es$ est un espace sym{\'e}trique de type non compact (proposition \ref{prop- d_g(x) si x dans Min(h)}).
\end{rema}

\begin{prop}
\label{prop- critere pf attractif} 
Soit $g\in\Isom(\Es)$  fixant deux points oppos{\'e}s $\xim$ et $\xipp$
dans $\bordinf\Es$. On note $g=(\gp, t)$ la d{\'e}composition de $g$
sur $F=\Fximp=C\times \RR$.

Si on a $t=\ell(g)$,
ou, de mani{\`e}re {\'e}quivalente par (\ref{eq- faisceau stable et ell(g)}), si $t>0$ et $\ell(\gp)=0$, 
alors $g$ a des points fixes attractif et r{\'e}pulsif {\`a} l'infini, et ce
sont $\xipp$ et $\xim$.
\end{prop}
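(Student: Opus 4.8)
The plan is to verify Definition~\ref{defi- pf attractif} directly, taking $\xippg=\xipp$ and $\ximg=\xim$ (which are opposite by hypothesis). Since $g$ fixes the two ends of $r$, it sends $r$ to a geodesic parallel to $r$, so by the paragraph preceding (\ref{eq- faisceau stable et ell(g)}) the fascicle $F=\Fximp=C\times\RR$ is $g$-stable and $g$ acts there as $(\gp,t)$, as in the statement; moreover $C$ (like $F$) is a complete $\CAT(0)$ space. It suffices to establish $g^kx\tend\xipp$ and $g^{-k}x\tend\xim$ for a single point $x=(c,0)\in C\times\RR$: once this holds, it holds for every $y\in\Es$, because $g^ky$ and $g^kx$ stay at constant distance $d(x,y)$ and hence define the same point at infinity in the cone topology --- this is exactly the equivalence ``pour un (tout) $x$'' of the definition.

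First I would record the equivalence of the two hypotheses: by (\ref{eq- faisceau stable et ell(g)}) one has $\ell(g)=\sqrt{\ell(\gp)^2+t^2}$, so, for $t>0$, the condition $t=\ell(g)$ is the same as $\ell(\gp)=0$. The key analytic input is that this forces the $C$-displacement to grow \emph{sublinearly}: applying (\ref{eq- d(x, g^k x)}) to the isometry $\gp$ of $C$ and using $\ell(\gp)=0$ gives $\tfrac1k\, d_C(c,\gp^k c)\tend 0$, i.e. $a_k:=d_C(c,\gp^k c)=o(k)$. Since $\ell(\gp^{-1})=\ell(\gp)=0$ as well, the same holds for $\gp^{-1}$, which will handle $g^{-1}$.

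Now $g^kx=(\gp^k c,\;kt)$, while the ray defining $\xipp$ is $\{c\}\times\RRp$, whose point at distance $R$ is $(c,R)$. As $F$ is convex, the segment $[x,g^kx]$ lies in $F$ and is the product geodesic: its point at arc length $R$ is a pair $(p_k,q_k)$ with $d_C(c,p_k)=R\,a_k/L_k$ and $q_k=R\,kt/L_k$, where $L_k=\sqrt{a_k^2+(kt)^2}$. Dividing through by $k$ and using $a_k/k\tend 0$ together with $t>0$, one gets $a_k/L_k\tend 0$ and $kt/L_k\tend 1$, so $(p_k,q_k)\tend(c,R)$ for each fixed $R$; since also $L_k\geq kt\tend\pinfty$, this is precisely cone-topology convergence $g^kx\tend\xipp$. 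Running the identical computation for $g^{-1}=(\gp^{-1},-t)$, where now the $\RR$-component moves toward $-\infty$, yields $g^{-k}x\tend r(\minfty)=\xim$.

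The computation is elementary once the product decomposition of $F$ is in place, so I expect no genuine obstacle; the single substantive point --- and the only place where the hypothesis is really used --- is the passage from ``the translation length of $\gp$ in $C$ vanishes'' to ``the $C$-component grows sublinearly'' via (\ref{eq- d(x, g^k x)}). This is what prevents a merely parabolic $\gp$ (with $\ell(\gp)=0$ but unbounded orbits in $C$) from tilting the limiting direction away from $\xipp$. The remaining care is simply in writing the product geodesic and checking the two scalar limits cleanly.
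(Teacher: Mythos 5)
Your proof is correct and follows essentially the same route as the paper's: both hinge on applying (\ref{eq- d(x, g^k x)}) to $\gp$ with $\ell(\gp)=0$ to get sublinear growth $d(\xp,(\gp)^k\xp)=o(k)$ in the $C$-factor, and then deducing cone-topology convergence $g^kx\tend\xipp$ and $g^{-k}x\tend\xim$ from the product structure of $F=\Fximp=C\times\RR$. The only difference is presentational: the paper observes $\frac{1}{kt}\,d\bigl(g^kx,(\xp,kt)\bigr)\tend 0$ and concludes at once, whereas you verify the definition of cone-topology convergence explicitly via the product geodesic, thereby filling in a step the paper leaves implicit.
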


\begin{preuve}
Soit $x=(\xp,0)$ dans $F$.
Alors 
\( \frac{1}{kt} d( g^k x, \; (\xp,kt) ) 
=  \frac{1}{kt} d(\xp,    \; (\gp)^k \xp ) \),
qui tend vers $0$ car $\ell(\gp)=0$ par (\ref{eq- d(x, g^k x)}).
Donc $g^k x \tend \xipp$ et $g^{-k} x \tend \xim$ quand $k\tend\pinfty$.
\end{preuve}

\begin{prop}
\label{prop- dynamique}
Soit $g$ ayant des points fixes attractif et r{\'e}pulsif $\xipp$ et
$\xim$ {\`a} l'infini.

Pour tout $x\in\Fg$
et tout $D>0$, pour tout voisinage $V$
de $\xipp$ dans $\bordinf\Es$
on a pour $k$ suffisamment grand
$$g^k(\OmbxyD \xim x D)\subset V$$
En particulier, $\xipp$ est le seul point fixe
de $g$ dans $\bordinf\Es$ qui est oppos{\'e} {\`a} $\xim$.
\end{prop}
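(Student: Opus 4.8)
The plan is to understand $g^{k}\eta$ through the geodesic that joins it to the \emph{repulsive} point $\xim$, and to show that $g^{k}$ drags this geodesic into the direction of $\xipp$. Fix $\eta\in\OmbxyD\xim x D$ and choose a geodesic $\sigma$ from $\xim$ to $\eta$ together with a point $m\in\sigma$ such that $d(x,m)\le D$. As $g$ fixes $\xim$, the geodesic $g^{k}\sigma$ joins $\xim$ to $g^{k}\eta$ and contains the point $g^{k}m$, with $d(g^{k}x,g^{k}m)=d(x,m)\le D$. I shall only use, from Definition \ref{defi- pf attractif}, that $g^{k}z\to\xipp$ and $g^{-k}z\to\xim$ for every $z\in\Es$ (in particular $d(x,g^{k}x)\to\pinfty$), together with elementary $\CAT(0)$ comparison; the decomposition $\Fg=C\times\RR$, $g=(\gp,t)$ of Proposition \ref{prop- critere pf attractif} only serves to picture the situation.

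Two facts drive the argument, both uniform for $m\in\ov B(x,D)$. First, $g^{k}m\to\xipp$: this is the elementary principle that a bounded perturbation of a sequence converging to a boundary point converges to the same point (by convexity of the metric, for each fixed radius the rays $[x,g^{k}m]$ and $[x,g^{k}x]$ differ by $o(1)$ while $d(x,g^{k}m)\to\pinfty$). Second, $g^{k}m$ lies asymptotically on the ray $[x,g^{k}\eta)$. Indeed $\angle_{g^{k}m}(\xim,g^{k}\eta)=\pi$ since $g^{k}m$ is interior to the geodesic $g^{k}\sigma$; on the other hand the isometry $g^{-k}$ gives $\angle_{g^{k}m}(x,\xim)=\angle_{m}(g^{-k}x,\xim)$, which tends to $0$ because $g^{-k}x\to\xim$. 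Hence $\angle_{g^{k}m}(x,g^{k}\eta)\ge\pi-\angle_{g^{k}m}(x,\xim)\to\pi$.

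Writing $\delta_{k}=\pi-\angle_{g^{k}m}(x,g^{k}\eta)\to0$, $\CAT(0)$ comparison converts this near-flat angle into a bound on the distance of $g^{k}m$ to the ray: applying the comparison inequality in the triangle $(x,g^{k}m,b_{T})$ with $b_{T}$ running along $g^{k}\sigma$ towards $g^{k}\eta$, and letting $T\to\pinfty$, one gets $d\bigl(g^{k}m,[x,g^{k}\eta)\bigr)\le\sin(\delta_{k})\,d(x,g^{k}m)=o\bigl(d(x,g^{k}x)\bigr)$. Let $w_{k}\in[x,g^{k}\eta)$ realise this distance; then $d(w_{k},g^{k}x)=o\bigl(d(x,g^{k}x)\bigr)$ and $d(x,w_{k})\to\pinfty$. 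A last convexity comparison closes the proof: for a fixed radius $R$ the segment $[x,w_{k}]$ differs from $[x,g^{k}x]$ by $o(1)$ at radius $R$ (two geodesics issued from $x$ with endpoints $o\bigl(d(x,g^{k}x)\bigr)$ apart), while $[x,g^{k}x]$ converges to $[x,\xipp)$ at radius $R$ because $g^{k}x\to\xipp$. As $w_{k}\in[x,g^{k}\eta)$, this means that $[x,g^{k}\eta)$ fellow-travels $[x,\xipp)$ up to radius $R$, i.e.\ $g^{k}\eta\to\xipp$ in the cone topology, and all the estimates are uniform in $\eta$. This is exactly the assertion $g^{k}(\OmbxyD\xim x D)\subset V$ for $k$ large. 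I phrase the conclusion via fellow-travelling rather than via the angle $\angle_{x}(\,\cdot\,,\xipp)$ on purpose: in a building the latter is degenerate and fails to detect the cone topology.

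I expect the main difficulty to be purely one of \emph{uniformity and non-properness}: turning $g^{-k}x\to\xim$ and $g^{k}x\to\xipp$ into estimates that are uniform for $m\in\ov B(x,D)$, which cannot be done by extracting convergent subsequences since $\Es$ need not be proper, so one must run the $\CAT(0)$ comparisons quantitatively. The remaining assertion is then formal: if $\xi\in\bordinf\Es$ is fixed by $g$ and opposite to $\xim$, choose a geodesic $\sigma'$ from $\xim$ to $\xi$ and put $D'=d(x,\sigma')<\pinfty$; then $\xi\in\OmbxyD\xim x{D'}$, so the statement gives $g^{k}\xi\to\xipp$, and since $g^{k}\xi=\xi$ we conclude $\xi=\xipp$.
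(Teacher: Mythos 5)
Your proof is correct, and although it starts from the same witness as the paper's proof --- a point $m$ (the paper's $\yk$) on a geodesic from $\xim$ to $\eta$ with $d(x,m)\le D$, whose image $g^{k}m$ stays on the geodesic from $\xim$ to $g^{k}\eta$ and converges to $\xipp$ because $d(g^{k}m,g^{k}x)\le D$ --- the mechanism by which you conclude is genuinely different, and more robust. The paper argues by contradiction and chases angles: using angle-sum inequalities at $x$ and at the moving point $g^{\nk}\yk$ it arrives at $\angle_x(\xipp,\xipk)\to 0$ and concludes $\xipk\to\xipp$; this final implication silently uses that the angle at a fixed point detects the cone topology on $\bordinf\Es$, which is true in a symmetric space (the only setting where the proposition is invoked, in the proof of Lemma \ref{lemm- continuite axes faible}) but fails in a general complete $\CAT(0)$ space --- exactly the degeneracy you point out, since in a tree or a building two distinct boundary points can make angle $0$ at $x$. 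Your substitute for that step --- the quantitative thin-triangle comparison $d\bigl(g^{k}m,[x,g^{k}\eta)\bigr)\le\sin(\delta_k)\,d(x,g^{k}m)$ followed by convexity and fellow-travelling --- avoids this issue entirely, so your argument proves the proposition in the full $\CAT(0)$ generality in which it is stated. The trade-off lies in where basepoint-dependence is handled: the paper's angle-sum trick $\angle_x(\xim,z)+\angle_z(x,\xim)\le\pi$ gives $\angle_{g^{\nk}\yk}(x,\xim)\to 0$ without ever moving the basepoint, whereas you derive $\angle_{g^{k}m}(x,\xim)=\angle_{m}(g^{-k}x,\xim)\to 0$ from the repulsive dynamics at the varying basepoint $m\in\ov B(x,D)$; this is precisely the uniformity issue you flag, and it is settled by the same kind of comparison estimate (cone convergence implies angle convergence, uniformly over basepoints in a bounded set), so what you leave out is routine rather than a gap.
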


\begin{preuve}
 Soit $x\in \Fximp$ et
$D>0$. Soit $V$ un voisinage de $\xipp$ dans $\bordinf\Es$.
Supposons par l'absurde qu'il existe une suite $(\xik)_k$ dans
$\OmbxyD \xim x D$
 et une suite croissante $(\nk)_k$ d'entiers avec $\xipk=
g^{\nk}\xik \notin V$ pour tout $k$.
Soit $\yk$ dans $F_{\xim \xik}$ tel que $d(x,\yk)\leq D$.  
Comme $g^{\nk} x \tend \xipp$,  on a  aussi
$g^{\nk} \yk \tend \xipp$, car $d(g^{\nk} \yk,g^{\nk} x)\leq D$. 
Par cons{\'e}quent $\angle_x(\xipp, g^{\nk} \yk) \tend 0$
et $\angle_x(\xim, g^{\nk}
\yk) \tend \pi$.
On en d{\'e}duit
que $\angle_{g^{\nk} \yk}(x,\xim) \tend 0$, puis 
que $\angle_{g^{\nk} \yk}(x, \xipk) \tend \pi$ (car
$\angle_{g^{\nk} \yk}(\xim, \xipk)=\pi$), puis que $\angle_x(g^{\nk}
\yk, \xipk) \tend 0$.
Comme
$\angle_x(\xipp, \xipk)
\leq \angle_x(\xipp,g^{\nk} \yk)+\angle_x(g^{\nk}\yk, \xipk)$,
on a $\angle_x(\xipp, \xipk)\tend 0$ 
et $\xipk \tend \xipp$, contradiction.
\end{preuve}

\begin{prop}
\label{prop- faisceau stable et point attractif}
  Soit $g\in\Aut(\Es)$ fixant deux points oppos{\'e}s
$\xipp,\xim\in\bordinf\Es$. Notons $(\gp, t)$ la d{\'e}composition de $g$
sur $F=\Fximp=C\times \RR$.
On suppose que $g$ poss{\`e}de des points
fixes attractif et r{\'e}pulsif $\xippg$ et $\ximg$  dans $\bordinf\Es$.
Alors $\xippg,\ximg\in\bordinf F$ et 
\[\tan(\angleTits(\xippg,\xipp)) \leq \frac{\ell(\gp)}{t}\]
\end{prop}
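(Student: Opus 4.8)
Le plan est de se ramener à la structure de produit de $F=\Fximp=C\times\RR$, le second facteur étant orienté vers $\xipp$, de sorte que $\xipp$ en soit l'extrémité $+\infty$ (on suppose $t>0$, cas dans lequel l'énoncé a un sens). D'abord, comme $g$ fixe $\xim$ et $\xipp$, il envoie toute géodésique d'extrémités $\xim,\xipp$ sur une géodésique de mêmes extrémités, donc stabilise $F$; on raisonnera ainsi entièrement dans $F$. Pour la première assertion, on partira d'un point $\xo=(\xp,0)\in F$: la suite $g^k\xo=((\gp)^k\xp,kt)$ reste dans le convexe fermé $F$ et tend vers $\xippg$, de sorte que le rayon joignant $\xo$ à $\xippg$, limite des segments $[\xo,g^k\xo]\subset F$, est contenu dans $F$ et $\xippg\in\bordinf F$; on traitera $\ximg$ de même.

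Pour l'inégalité, l'idée est de lire l'angle de Tits directement sur la structure de produit. On notera $a:F\fleche\RR$ la projection sur le second facteur (croissant vers $\xipp$), qui est $1$-lipschitzienne. Le rayon issu de $\xo$ d'extrémité $\xipp$ étant ``vertical'', la structure de produit de $F$ donnera, pour tout $\xi\in\bordinf F$, que $\angleTits(\xipp,\xi)$ est l'arccosinus de la vitesse de croissance de $a$ le long du rayon issu de $\xo$ d'extrémité $\xi$ (angle d'ailleurs indépendant du point base, ce qui en fait bien la borne supérieure définissant $\angleTits$). Comme $F$ est convexe fermé, cet angle de Tits se calcule indifféremment dans $F$ ou dans $\Es$ \cite[II.9]{BrHa}, ce qui permettra de conclure sur $\angleTits(\xippg,\xipp)$ au sens de $\Es$.

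Il restera à calculer cette vitesse pour $\xi=\xippg$. Le long du segment $[\xo,g^k\xo]$, paramétré par la longueur d'arc, la coordonnée $a$ croît linéairement à vitesse $kt/\ell_k$, où $\ell_k=d(\xo,g^k\xo)=\sqrt{D_k^2+(kt)^2}$ et $D_k=d((\gp)^k\xp,\xp)$. En faisant tendre $k$ vers l'infini, les segments convergent vers le rayon d'extrémité $\xippg$, et, en posant $L=\lim_k\frac{1}{k}D_k$, on obtiendra $kt/\ell_k\tend(1+(L/t)^2)^{-1/2}$, soit
\[\tan\big(\angleTits(\xippg,\xipp)\big)=\frac{L}{t}\,.\]
L'inégalité (\ref{eq- d(x, g^k x)}) appliquée à l'isométrie $\gp$ de $C$ donnera enfin $L\leq\ell(\gp)$, d'où la conclusion. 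Le point délicat sera la justification de la caractérisation de $\angleTits(\xipp,\cdot)$ par la vitesse de croissance de $a$ (et l'égalité des angles de Tits calculés dans $F$ et dans $\Es$), ainsi que le passage à la limite sur les segments $[\xo,g^k\xo]$, reposant sur leur convergence vers le rayon d'extrémité $\xippg$ et sur la continuité de $a$.
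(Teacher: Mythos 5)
Votre preuve est correcte et suit essentiellement la même démarche que celle du papier : dans les deux cas on stabilise $F$, on obtient $\xippg\in\bordinf F$ comme limite de l'orbite $g^k\xo=((\gp)^k\xp,kt)$ dans le convexe fermé $F$, on lit l'angle de Tits avec $\xipp$ sur la structure de produit le long de ces points d'orbite (vous via le cosinus, i.e. la vitesse de croissance de la coordonnée $\RR$ ; le papier via $\tan(\angle_{\xo}(g^n\xo,\xipp))=\frac{1}{nt}d(\xp,(\gp)^n\xp)$), et on conclut par la majoration $\lim_k\frac{1}{k}d(\xp,(\gp)^k\xp)\leq\ell(\gp)$ issue de (\ref{eq- d(x, g^k x)}). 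Les différences sont purement rédactionnelles (cosinus contre tangente du même angle), et les points que vous signalez comme délicats sont précisément ceux que le papier utilise aussi implicitement.
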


\begin{preuve}
Soit $x\in F$. Comme  
$\xippg=\lim_{n\tend\pinfty} g^nx$ et $g(F)=F$, on
a $\xippg\in \bordinf F$.
De plus 
$\angleTits(\xippg,\xipp)=
\angle_x(\xippg,\xipp)=\lim_{n\tend\pinfty}\angle_x(g^nx,\xipp)$
et pour $x=(\xp,0)$ on a 
$\tan(\angle_x(g^n x,\xipp))=\frac{1}{nt}d(\xp,(\gp)^n\xp)$.
Or  $\lim_{n\tend\pinfty}\frac{1}{n}d(\xp,(\gp)^n\xp)\leq \ell(\gp)$
par (\ref{eq- d(x, g^k x)}).
\end{preuve}

\subsection{Espaces sym{\'e}triques et immeubles affines}
\label{ss- prel ES IMM}

\subsubsection{G{\'e}n{\'e}ralit{\'e}s}

\paragraph{Espaces sym{\'e}triques.}
\label{ss- prel ES}
Les espaces sym{\'e}triques consid{\'e}r{\'e}s dans cet article sont
 sans facteurs compacts.
On renvoie {\`a} \cite{Hel} et \cite{Ebe} comme r{\'e}f{\'e}rences g{\'e}n{\'e}rales. 
Soit $\Es$ un espace sym{\'e}trique. 
Rappelons que $\Es$ est une vari{\'e}t{\'e} riemannienne 
compl{\`e}te simplement connexe, {\`a} courbure sectionnelle n{\'e}gative ou
nulle. 
En particulier $\Es$ est un espace m{\'e}trique $\CAT(0)$ complet et propre,
et on reprendra les notations de la section \ref{ss- CAT(0)}.

On note $\Es_0$ le facteur euclidien de $\Es$ et  $\Es=\Es_0 \times
\Es'$ la d{\'e}composition correspondante.
On appellera groupe des {\em automorphismes} de $\Es$ et on notera
$G=\Aut(\Es)$ la composante neutre du sous-groupe des isom{\'e}tries de $\Es$
agissant par translation sur $\Es_0$.
Le groupe $G$ agit proprement et transitivement sur $\Es$ et c'est un
groupe de Lie r{\'e}el. 
On a $G=G_0\times G'$ o{\`u}
$G_0=\Aut(\Es_0)=(\RR^k,+)$ (avec $k=\dim\Es_0$) et
$G'=\Aut(\Es')$ est semisimple de centre trivial. 

Si $r$ est une g{\'e}od{\'e}sique de $\Es$ alors $\Fr$ et $\Cr$ sont des
sous-espaces totalement g{\'e}od{\'e}siques de $\Es$.

Si $\Es'$ est un sous-espace totalement g{\'e}od{\'e}sique de $\Es$, alors
$\Es'$ est un espace sym{\'e}trique sans facteur compact, 
et 
la composante neutre de $\Stab_G(\Es')$
agit sur $\Es'$ par automorphismes, transitivement.
\label{def plats max marques ES}
On fixe un {\em plat} (sous-espace totalement g{\'e}od{\'e}sique euclidien) maximal
 (pour l'inclusion)  $\Aa$ de $\Es$. 
 Les restrictions {\`a} $\Aa$ des {\'e}l{\'e}ments de $G$ seront appel{\'e}s {\em plats
 maximaux marqu{\'e}s} de $\Es$.

\paragraph{Immeubles affines.}
Les immeubles affines consid{\'e}r{\'e}s dans cet article sont m{\'e}triques, pas
n{\'e}\-ces\-sai\-re\-ment localement compacts, ni discrets, c'est-{\`a}-dire
qu'ils n'admettent pas n{\'e}\-ces\-sai\-re\-ment de structure de complexe
(poly)simplicial \cite{Tits}.
Pour leur d{\'e}\-fi\-ni\-tion, on renvoie {\`a} \cite{ParImm}, o{\`u} elle est
{\'e}tudi{\'e}e en d{\'e}tail.
On reprendra le vocabulaire et les notations de cet article, auquel on
renvoie {\'e}galement pour les propri{\'e}t{\'e}s utilis{\'e}es et non
d{\'e}montr{\'e}es ci-dessous (notamment pour l'{\'e}quivalence avec la
d{\'e}finition de Kleiner-Leeb \cite{KlLe}, dont on aura besoin pour le
th{\'e}or{\`e}me \ref{theo- cone as = immeuble}).

On se fixe un  espace vectoriel $\Aa$ euclidien de dimension finie
et un groupe de r{\'e}flexions fini $\Wvect$ agissant sur $\Aa$ (c'est-{\`a}-dire un
sous-groupe fini d'isom{\'e}tries vectorielles de $\Aa$, engendr{\'e} par des
r{\'e}flexions). 
 On fixe {\'e}galement une chambre de Weyl ferm{\'e}e $\Cb$ de $\Aa$.
Soit $\Waff$ un 
 groupe de r{\'e}flexions affine de type $(\Aa, \Wvect)$,
 c'est-{\`a}-dire un
sous-groupe d'isom{\'e}tries affines de $\Aa$, de projection
vectorielle $\Wvect$, tel qu'il existe $a$ dans $\Aa$ tel que $\Waff=T\Waff_a$,
o{\`u} $\Waff_a$ fixe $a$ et $T$ est un sous-groupe de translations de $\Aa$
(ou, de mani{\`e}re {\'e}quivalente, engendr{\'e} par des r{\'e}flexions affines).

Soit $\Es$ un immeuble affine, 
model{\'e} sur $(\Aa, \Waff)$.
On note $\A$ son syt{\`e}me d'appartements marqu{\'e}s aussi appel{\'e}s 
{\em plats maximaux marqu{\'e}s}.
\label{def plats max marques IMM}
Un {\em automorphisme} de $\Es$ est une bijection de
$\Es$ dans lui-m{\^e}me qui pr{\'e}serve $\A$.
On rappelle que $(\Es,d)$ est un espace m{\'e}trique $\CAT(0)$ (pas
n{\'e}c{\'e}ssairement complet).
\paragraph{Sous-immeubles.}
\label{ss- sous-immeuble}
On dira qu'un groupe de r{\'e}flexions $(\Aa',\Waff')$ 
est un {\em sous-groupe de r{\'e}flexions}
de $(\Aa, \Waff)$ 
si $\Aa'$ est un sous-espace affine 
de $\Aa$
et les {\'e}l{\'e}ments de $\Waff'$ 
sont les restrictions {\`a} $\Aa'$ d'{\'e}l{\'e}ments de
$\Waff$ 
stabilisant $\Aa'$.
On dira alors qu'un immeuble $(\Es',\A')$ model{\'e} sur $(\Aa',\Waff')$
est un
{\em sous-immeuble} de $\Es$ si 
$\Es'\subset \Es$ 
et si les appartements marqu{\'e}s de $\Es'$ sont les restrictions {\`a}
$\Aa'$ d'appartements marqu{\'e}s de $\Es$.  
Alors $\Es'$ est un sous-espace convexe de $(\Es,d)$.
Par exemple, si $\Es$ est un produit d'immeubles $(\Es_i,\A_i)$ 
model{\'e}s sur $(\Aa_i, \Waff_i)$, 
on v{\'e}rifie facilement que
les facteurs sont des sous-immeubles de $\Es$.
Un autre exemple de base est le suivant. Soit $r$ une
g{\'e}od{\'e}sique de $\Es$, incluse dans un appartement (ce qui est toujours
le cas si $\A$ est le syst{\`e}me d'appartements maximal) et $\Es' =\Fr$. 
Alors $\Es'$ est la r{\'e}union des appartements de $\Es$ contenant $r(\pinfty)$
et $r(\minfty)$ dans leur bord {\`a} l'infini.
Soit $u\in \Aa$ un vecteur 
tel que $r$ est {\'e}gale {\`a} $t\mapsto
 f(a+tu)$ pour un plat maximal marqu{\'e} $f:\Aa\fleche \Es$ de $\Es$ et un
 $a\in \Aa$.
Soit $\Waff'$ le sous-groupe des {\'e}l{\'e}ments
de $\Waff$ fixant (vectoriellement) $u$.   
On peut d{\'e}montrer (par exemple en suivant
\cite[prop. 4.8.1]{KlLe}) que $\Es'$ est un sous-immeuble de $\Es$
model{\'e} sur $(\Aa,\Waff')$.
Les plats maximaux marqu{\'e}s de $\Es'$ sont les plats maximaux marqu{\'e}s
$f:\Aa\fleche \Es$ de $\Es$ tels que $t\mapsto f(tu)$ est une
g{\'e}od{\'e}sique parall{\`e}le {\`a} $r$.

\subsubsection{Espace associ{\'e} {\`a} un groupe r{\'e}ductif}
\label{ss- espace associe a un GR}

Soit $\KK$ un corps valu{\'e}, ou bien {\'e}gal {\`a} $\RR$ ou $\CC$, ou bien
ultram{\'e}trique.

\paragraph{Groupe r{\'e}ductif sur $\KK$.}
 Soit $\Gb$ un groupe alg{\'e}brique lin{\'e}aire connexe r{\'e}ductif d{\'e}fini sur
$\KK$.
On peut supposer que $\Gb$ est un sous-groupe
alg{\'e}brique d{\'e}fini sur $\KK$ de $\SLn$.

Dans le cas o{\`u} $\KK=\RR$, on suppose que 
 $G$ est un sous-groupe ferm{\'e} de $\Gb(\RR)$ contenant sa composante
neutre $\Gb(\RR)^0$.
Si $\KK\neq \RR$, on suppose  que $G=\Gb(\KK)$
Le groupe $G$ est muni de la topologie induite par celle de $\KK$.
C'est 
un groupe topologique m{\'e}trisable, {\`a} base d{\'e}nombrable, localement
compact si $\KK$ l'est.

\paragraph{Espace sym{\'e}trique associ{\'e} {\`a} un groupe r{\'e}ductif archim{\'e}dien.}
\label{ss- espace sym associe a un groupe reductif reel}
On suppose que 
$\KK=\RR$ ou $\CC$ (notons que, si $\KK=\CC$ le groupe $G$ est le
groupe des points r{\'e}els du $\RR$-groupe r{\'e}ductif connexe $\Hb$ obtenu
{\`a} partir de $\Gb$ par restriction des scalaires).
Soit $K$ un sous-groupe  compact maximal de $G$.
Alors  l'espace homog{\`e}ne $\Es=G/K$
est un espace sym{\'e}trique sans facteur compact, dit {\em associ{\'e}} {\`a}
$G$, pour toute m{\'e}trique riemannienne $G$-invariante sur $\Es$
(cela d{\'e}coule de l'existence d'une involution de Cartan de $(G,K)$,
voir \cite[24.6(a)]{Borel} et \cite{Hel}).
Le groupe $G$ agit proprement, transitivement, par automorphismes,
sur $\Es$. 
\paragraph{Immeuble d'un groupe r{\'e}ductif non archim{\'e}dien \cite{BrTi,BrTi2}.}
\label{ss- immeuble d'un groupe reductif sur un corps non arch}

On renvoie par exemple {\`a} \cite{RousseauEe04} pour plus de d{\'e}tails.
On suppose que $\KK$ est ultram{\'e}trique
et que les hypoth{\`e}ses de 
\cite[4 ou 5]{BrTi2} sont satisfaites.
Rappelons que 
c'est notamment le cas dans les deux situations suivantes :

- $G$ est d{\'e}ploy{\'e} sur le corps $\KK$ (par exemple $\Gb=\SLn$). 

- $\KK$ est {\`a} valuation discr{\`e}te, hens{\'e}lien (par exemple
  complet),  de corps r{\'e}siduel parfait (par exemple fini).
On peut alors construire l'immeuble de Bruhat-Tits ({\'e}largi) $\Es=\mathcal{B}(\Gb,\KK)$ de
$\Gb$ sur $\KK$.
C'est un immeuble affine, pas n{\'e}cessairement discret ni complet.
Le groupe $G$ agit contin{\^u}ment, par automorphismes de $\Es$, 
\label{SLnK agit transitivement sur les
appartements marques de son imm de BT}
transitivement sur les appartements marqu{\'e}s,
cocompactement sur $\Es$. 
Si $\KK$ est un corps local, alors $\Es$ est propre et l'action de $G$ sur $\Es$ est propre \cite[2.2]{Tits2}.  

\subsubsection{Type des segments}
\label{ss- type}

On suppose d{\'e}sormais que $\Es$ est un espace sym{\'e}trique
ou un immeuble affine (voir section \ref{ss- prel ES  IMM}) et on note
$G=\Aut(\Es)$. 
On fixe 
une chambre de Weyl ferm{\'e}e $\Cb$ du plat mod{\`e}le $\Aa$.
C'est un domaine fondamental strict pour l'action 
du  groupe de Weyl (vectoriel) $\Wvect$ sur $\Aa$. 
On note $\type:\Aa\fleche\Cb$ la projection 
 correspondante, qui {\`a} un vecteur dans $\Aa$ associe son
{\em type} dans $\Cb$.
On note $v^\opp=\type(-v)$ type {\em oppos{\'e}} {\`a} $v\in\Cb$.
Si $x,y\in\Es$, il existe un unique vecteur
de $\Cb$, appel{\'e} le {\em type} du segment $xy$ (ou la {\em \Cdistance}
de $x$ {\`a} $y$) et not{\'e} 
$\Cd(x,y)$ ou $\Cd xy$,
tel qu'il existe un plat maximal marqu{\'e}
 $f:\Aa\fleche \Es$, et deux
points $A$ et $B$ de $\Aa$, avec $f(A)=x$, $f(B)=y$ et
$\overrightarrow{AB}=\Cd(x,y)$.
On a clairement $\norme{\Cd(x,y)}=d(x,y)$ et 
$\Cd(y,x) =\Cd(x,y)^\opp$ pour tous $x,y$ de $\Es$.  
\label{ss- type - props elem}
Les automorphismes de $\Es$ pr{\'e}servent le type des segments.
\label{ss- type - les autom preserve le type}
Dans le cas des espaces sym{\'e}triques $G$ agit transitivement
sur les segments de type donn{\'e}.

\begin{rema}
Lorsque $\Es$ est l'espace associ{\'e} {\`a} un groupe 
r{\'e}\-duc\-tif $G$ sur un corps local 
(voir section \ref{ss- espace associe a un GR}),
si $K$ est un sous-groupe compact maximal
de $G$ et $\xo\in\Es$ est le point fix{\'e} par $K$, 
alors, pour $g$ dans $G$, le vecteur $\Cd(\xo,g\xo)\in\Cb$
s'identifie {\`a} la projection de Cartan de $g$ associ{\'e}e {\`a} $K$
(voir par exemple \cite[1.1 et 2.3]{Benoist}).
\end{rema}

On renvoie {\`a} \cite{ParType} pour de plus fines propri{\'e}t{\'e}s du type des
segments. On utilisera la propri{\'e}t{\'e} suivante.

\begin{prop} 
\label{prop- Cd 1-Lip}
\cite{ParType}
Le type est $1$-Lipschitz en chaque coordonn{\'e}e, plus pr{\'e}\-ci\-s{\'e}\-ment
pour tous $x,y,\xp,\yp$ de $\Es$, on a
$$\norme{\Cd(x,y)-\Cd(\xp,\yp)}\leq d(x,\xp)+d(y,\yp)$$ 
En particulier,
 $\Cd : \Es\times \Es \fleche\Cb$ est continue 
\qed\end{prop}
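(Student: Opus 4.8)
The plan is to establish the estimate in one variable at a time and combine by the triangle inequality. I first record that the opposition map $v\mapsto v^\opp=\type(-v)$ is an isometry of $\Cb$: since $-\Cb$ is a Weyl chamber, the folding $\type$ restricted to it is the unique $\sigma_0\in\Wvect$ sending $-\Cb$ onto $\Cb$, so $\opp=\sigma_0\circ(-\id)$ is the restriction to $\Cb$ of a linear isometry of $\Aa$; in particular $\norme{u^\opp-w^\opp}=\norme{u-w}$. Combined with $\Cd(y,x)=\Cd(x,y)^\opp$, this shows it suffices to prove the single bound $\norme{\Cd(y,x)-\Cd(y,x')}\le d(x,x')$ (first argument fixed): the general inequality then follows by writing $\norme{\Cd(x,y)-\Cd(x',y')}\le\norme{\Cd(x,y)-\Cd(x,y')}+\norme{\Cd(x,y')-\Cd(x',y')}$ and applying this bound to each term, the second after passing to opposites.

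To prove that $\Cd(y,\cdot):\Es\fleche\Cb$ is $1$-Lipschitz I would factor it through the tangent cone at $y$. The logarithm map $\log_y$, sending $p$ to the pair (direction of $[y,p]$, $d(y,p)$), is $1$-Lipschitz: by the $\CAT(0)$ inequality the Alexandrov angle at $y$ is at most the comparison angle, so the cone distance between $\log_y p$ and $\log_y q$ is at most $d(p,q)$. Moreover $\Cd(y,p)$ depends only on $d(y,p)$ and the type of the direction of $[y,p]$, so $\Cd(y,\cdot)=\type_y\circ\log_y$, where $\type_y$ folds the tangent cone onto $\Cb$. For a symmetric space the tangent cone is the Euclidean space $T_y\Es=\goth p$, $\log_y=\exp_y^{-1}$, and by the polar decomposition $\type_y$ sends a vector to the dominant representative of its $\Stab_G(y)$-orbit; for an affine building the tangent cone is itself a Euclidean building modelled on $(\Aa,\Wvect)$ and $\type_y$ is its vector distance from the cone point. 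In both cases the proposition is reduced to the single assertion that $\type_y$ is $1$-Lipschitz.

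This remaining assertion is where the real content lies, and I expect it to be the main obstacle. It amounts to the fact that folding onto the chamber does not increase distances, equivalently that for $u,w\in\Cb$ the dominant vector $w$ is the point of its orbit closest to $u$, i.e. $\prodscal{u}{\sigma w}\le\prodscal{u}{w}$ for every $\sigma$ in the relevant reflection group. For the flat (building) case this is exactly the rearrangement inequality for the finite reflection group $\Wvect$ — for $\Gb=\SLn$, the statement that sorting the logarithms of the eigenvalue moduli in decreasing order is $1$-Lipschitz. For the symmetric space case one needs the stronger inequality $\prodscal{u}{\Ad(k)w}\le\prodscal{u}{w}$ over the whole compact orbit; this is furnished by Kostant's convexity theorem, which identifies the orthogonal projection of $\Stab_G(y)\cdot w$ onto $\Aa$ with the convex hull of $\Wvect\cdot w$ and thereby reduces the estimate to the reflection-group case. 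Granting this, the factorization $\Cd(y,\cdot)=\type_y\circ\log_y$ closes the one-variable bound, hence the full inequality, and the continuity of $\Cd$ is immediate.
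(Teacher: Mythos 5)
The first thing to note is that the paper contains no proof of this proposition: it is stated with a \qed and a citation to \cite{ParType} (listed as ``en pr\'eparation''), so there is no internal argument to compare yours against — a correct proof here supplies something the paper deliberately defers. Judged on its own, your proof is correct. The reduction to one variable is sound: $v\mapsto v^\opp$ is the restriction to $\Cb$ of the linear isometry $-w_0$ of $\Aa$ (with $w_0$ the longest element of $\Wvect$), so it preserves norms of differences, and combined with $\Cd(y,x)=\Cd(x,y)^\opp$ and the triangle inequality everything follows from the single bound $\norme{\Cd(y,x)-\Cd(y,x')}\leq d(x,x')$. The factorization $\Cd(y,\cdot)=\type_y\circ\log_y$ is legitimate, since $\Cd(y,p)$ is determined by $d(y,p)$ and the type of the initial direction of $[y,p]$, and $\log_y$ is indeed $1$-Lipschitz by the $\CAT(0)$ comparison of angles. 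The genuinely substantive step, the $1$-Lipschitz property of the folding $\type_y$, you identify and treat correctly: in the symmetric-space case, Kostant's convexity theorem plus the reflection-group inequality $\prodscal{u}{\sigma w}\leq\prodscal{u}{w}$ (for $u,w\in\Cb$, $\sigma\in\Wvect$) gives exactly the needed estimate $\prodscal{u}{\Ad(k)w}\leq\prodscal{u}{w}$ over the whole isotropy orbit.

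Two steps should be made explicit to close the argument. In the building case, ``this is exactly the rearrangement inequality'' hides the fact that any two points of the tangent cone lie in a common apartment containing the cone point; this holds because apartments of the cone are cones over apartments of the space of directions, which is a spherical building by \cite{KlLe}, and only inside such an apartment does $\type_y$ become the model folding $\type:\Aa\fleche\Cb$ to which the rearrangement inequality applies. Equivalently — and closer to the tools the paper already quotes — one can avoid tangent cones entirely: angle rigidity (\cite[4.1.2]{KlLe}) gives $\angle_y(x,x')\in D(\untype,\untype')$, the minimum of $D(\untype,\untype')$ is the angle between the types in $\bord\Cb$ (again by rearrangement), and the law of cosines then yields the one-variable bound directly. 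In the symmetric-space case, the identification of $\Cd(y,\exp_y v)$ with the dominant representative of the isotropy orbit of $v$ is the Cartan decomposition based at $y$; it is standard, but it is precisely where the geometric definition of $\Cd$ via marked flats meets the Lie-theoretic one, so it deserves a sentence. With these two points supplied, your proof is complete and correct.
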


Etant donn{\'e} $g\in\Aut(\Es)$,
on note $\Cd_g$
l'application continue 
$$\Cd_g :
\begin{array}{ccl}
\Es &\fleche &\Cb\\
x&\mapsto&\Cd(x,gx)
\end{array}$$
On utilisera la propri{\'e}t{\'e} suivante, qui d{\'e}coule 
de la proposition \ref{prop- Cd 1-Lip}.

\begin{prop} \label{prop- type_gk cv vers type_g}
Soit $\gk\tend g$  dans $\Aut(\Es)$. 
Alors $\Cd_{\gk}\tend \Cd_g$ uniform{\'e}ment  sur les
born{\'e}s.  \qed \end{prop}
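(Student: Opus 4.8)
The plan is to derive the statement as a direct consequence of the coordinatewise Lipschitz estimate of Proposition \ref{prop- Cd 1-Lip}, combined with the very definition of the topology on $\Aut(\Es)$; essentially no work remains once these two ingredients are combined.

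First I would apply Proposition \ref{prop- Cd 1-Lip} to the quadruple of points $x$, $\gk x$ on one side and $x$, $gx$ on the other, i.e.\ with $(x,y)=(x,\gk x)$ and $(\xp,\yp)=(x,gx)$. This gives, for every $x\in\Es$,
\[
\norme{\Cd_{\gk}(x)-\Cd_g(x)}=\norme{\Cd(x,\gk x)-\Cd(x,gx)}\leq d(x,x)+d(\gk x,gx)=d(\gk x,gx).
\]
Thus the pointwise gap between the two type maps $\Cd_{\gk}$ and $\Cd_g$ is bounded at each point $x$ by the displacement $d(\gk x,gx)$ between the two images of $x$, with no loss in the constant.

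Next I would unwind the hypothesis $\gk\tend g$. Recall that $\Isom(\Es)$, and hence its subgroup $\Aut(\Es)$, carries the topology of uniform convergence on bounded sets; so for any bounded subset $B\subset\Es$ one has $\sup_{x\in B}d(\gk x,gx)\tend 0$. Taking the supremum over $x\in B$ in the inequality above then yields
\[
\sup_{x\in B}\norme{\Cd_{\gk}(x)-\Cd_g(x)}\leq\sup_{x\in B}d(\gk x,gx)\tend 0,
\]
which is exactly the asserted uniform convergence of $\Cd_{\gk}$ to $\Cd_g$ on $B$. There is no genuine obstacle: the only substantive input is the $1$-Lipschitz property already proved in Proposition \ref{prop- Cd 1-Lip}, and everything else is a verbatim transfer of the definition of convergence in $\Aut(\Es)$. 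The one point to state with care is that the bounding supremum $\sup_{x\in B}d(\gk x,gx)$ is taken over the \emph{same} bounded set $B$ appearing on the left, so that uniform convergence of the isometries passes without modification to uniform convergence of the associated type maps.
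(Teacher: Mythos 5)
Your proof is correct and is precisely the argument the paper intends: the statement is presented there as an immediate consequence of Proposition \ref{prop- Cd 1-Lip}, and your application of that Lipschitz estimate with $(x,y)=(x,\gk x)$, $(\xp,\yp)=(x,gx)$, followed by the definition of the topology of uniform convergence on bounded sets on $\Aut(\Es)$, fills in exactly the details the paper leaves to the reader.
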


\paragraph{Propri{\'e}t{\'e}s fonctorielles.}
\label{ss- type - produit, sous-espace}
Si $\Es$ est euclidien, alors $\Aa=\Es$, $\Wvect=\{\id\}$, $\Cb=\Aa=\Es$, et
$\Cd(x,y) = \overrightarrow{xy}$ pour tous $x$, $y$ de $\Es$.
Si $\Es=\Es_1\times \Es_2$ est un produit de deux espaces sym{\'e}triques ou
immeubles affines $\Es_i$.
Alors $\Aa=\Aa_1\oplus \Aa_2$ o{\`u}  $\Aa_i$ est un  plat maximal de $\Es_i$,
$\Wvect=\Wvect_1\times \Wvect_2$ o{\`u}  $\Wvect_i$ est le groupe de Weyl de $\Aa_i$,
et $\Cb=\Cb_1+\Cb_2$ o{\`u}  $\Cb_i$ est une chambre de Weyl ferm{\'e}e de
$\Aa_i$.
Les plats  maximaux marqu{\'e}s $f:\Aa \fleche \Es$ sont de la forme
$f=(f_1,f_2)$ o{\`u} $f_i:\Aa_i \fleche \Es_i$ est un  plat  maximal marqu{\'e}.
Si $\Cd_i:\Es_i\times \Es_i\fleche\Cb_i$ est le type des segments de $\Es_i$, on a

\begin{equation}
  \Cd (x,y) =\Cd_1(x_1,y_1)+\Cd_2(x_2,y_2)
\end{equation}
 pour tous $x=(x_1,x_2)$ et $y=(y_1,y_2)$ de $\Es$.
Soit $\Es'$ un sous-espace sym{\'e}trique ou un sous-immeuble de $\Es$.
Soit $\Aa'$ un plat maximal de $\Es'$, et $\Wvect'$ son groupe de
Weyl, et $\Cb'$ une chambre de Weyl ferm{\'e}e de $\Aa'$.
Alors $\Aa'$ s'identifie {\`a} un sous-espace totalement g{\'e}od{\'e}sique de $\Aa$, 
et les {\'e}lements de $\Wvect'$ sont des restrictions {\`a} $\Aa'$ d'{\'e}l{\'e}ments de
$\Wvect$ stabilisant $\Aa'$.
Par cons{\'e}quent, si $\Cd':\Es'\times \Es'\fleche\Cb'$ est le type des
segments de $\Es'$, on a 
\begin{equation}
  \Cd =\type \circ \Cd'
\end{equation}
 en restriction {\`a} $\Es'\times \Es'$.

\paragraph{Type de direction.}
Notons $\bord \Aa$ l'ensemble des vecteurs unitaires de $\Aa$, identifi{\'e} {\`a} l'ensemble des demi-droites de $\Aa$, 
muni de la distance angulaire,
et $\bord : \Aa\fleche \bord \Aa$ la projection correspondante.
Pour  $\untype,\untype'\in\bCb$ (qui est l'ensemble
not{\'e} $\Dmod$ dans \cite{KlLe}), on note $D(\untype,\untype')$
l'ensemble fini des angles possibles entre $v$
et $v'$ de $\bord\Aa$ de types respectifs $\untype$ et $\untype'$.

Si $\Es$ est un immeuble affine, alors il y a rigidit{\'e} des
angles \cite[4.1.2]{KlLe}: pour tous $x,y,z\in\Es$
avec $y,z\neq x$ on a $\angle_x(y,z)\in D(\untype,\untype')$
avec $\untype=\bord\Cd(x,y)$ et $\untype'=\bord\Cd(x,z)$.
 
Le {\em type (de direction)} $\btype r$ d'une g{\'e}od{\'e}sique (ou d'un
rayon) $r$ est  $\bord\Cd( r(s), r(t) )$ pour tout $s < t$.  Deux
g{\'e}od{\'e}siques parall{\`e}les ont m{\^e}me type.  
 Les automorphismes de $\Es$ pr{\'e}servent le type des g{\'e}od{\'e}siques.

Dans le cas des espaces sym{\'e}triques, le stabilisateur $\Stab_G(x)$
dans $G$ de $x\in\Es$ agit transitivement sur les g{\'e}od{\'e}siques de type
donn{\'e} issues de $x$.

\subsubsection{Structure du bord {\`a} l'infini}
On suppose  que $\Es$ est un espace sym{\'e}trique ou un immeuble
affine complet.
 
\paragraph{Type d'un point {\`a} l'infini.}
Deux rayons g{\'e}od{\'e}siques asymptotes ont m{\^e}me type.
 On peut donc d{\'e}finir le type (not{\'e} {\'e}galement $\btype$) dans $\bCb$
des points de $\bordinf\Es$. 
 Notons $(\bordinf\Es)_\untype$  le sous-ensemble des points de
 $\bordinf\Es$ de type $\untype\in\bord\Cb$.

L'action de $\Aut(\Es)$ sur $\bordinf\Es$ pr{\'e}serve le type des points.

Dans le cas des espaces sym{\'e}triques l'action de $G$ sur $\bordinf\Es$
est transitive sur $(\bordinf\Es)_\untype$ pour tout $\untype\in\bord\Cb$,
et si $\xi\in\bordinf\Es$, alors $(\bordinf\Es)_\untype=G\xi$
s'identifie {\`a} la vari{\'e}t{\'e}
$G/P$, o{\`u} $P$ est le sous-groupe parabolique
$\Stab_G(\xi)$ de $G$.
\label{ss- prel ES - Gxi variete}
\label{prel-strure d'immeuble sur le bord}
Si $\Es$ est complet, son bord $\bordinf\Es$ muni de la distance de
Tits (donn{\'e}e par $\angleTits$) est isom{\'e}trique {\`a} la r{\'e}alisation
g{\'e}om{\'e}trique model{\'e}e sur $(\bord \Aa,\Wvect)$ \cite[section
  2.6]{ParImm} d'un immeuble sph{\'e}rique.
Les chambres de cet immeuble sont les bords des chambres de Weyl de
$\Es$, et ses appartements sont les bords des plats maximaux de $\Es$.
Si $\xi,\xi'\in\bordinf \Es$, il existe un plat maximal $A$ de $\Es$
tel que $\xi,\xi'\in\bordinf A$, voir \cite[2.21.14]{Ebe}, \cite[1.2,
  (A4)]{ParImm}.
Pour tout $x\in A$, on a
$\angle_x(\xi,\xi')=\angleTits(\xi,\xi') \in D(\btype (\xi),\btype (\xi'))$.
En particulier si $\angleTits(\xi,\xi')=\pi$, alors $\xi$ et $\xi'$
sont oppos{\'e}s.

Dans le cas des espaces sym{\'e}triques $\Aut(\Es)$ agit transitivement
sur les couples de points du bord oppos{\'e}s, de types donn{\'e}s.

\paragraph{Points visuellement presque oppos{\'e}s.}
Si $\Es$ est un immeuble affine, si $\xim,\xipp\in\bordinf
\Es$ sont visuellement presque oppos{\'e}s en $x\in\Es$, alors ils sont
effectivement oppos{\'e}s en $x$ : plus pr{\'e}cis{\'e}ment, en notant $\untype$ et
$\untype^\opp$ les types respectifs de $\xim$ et $\xipp$, si
$\angle_x(\xim, \xipp)> \max( D(\untype,\untype^\opp) - \{\pi\})$
alors $x\in\Fximp$ (par rigidit{\'e} des angles).

Dans les espaces sym{\'e}triques on a la propri{\'e}t{\'e} analogue plus faible
suivante.

\begin{prop} 
\label{prop- points visuellement presque opposes} 
Soit $\untype\in\bCb$ et $\untype^\opp$ le type oppos{\'e}. 
Pour tout $D>0$, il existe $\eps>0$ tel que pour tous
$\xim,\xipp\in\bordinf \Es$ de types respectifs $\untype$ et
$\untype^\opp$,
et pour tout $x\in\Es$, 
si $\angle_x(\xim,\xipp)\geq \pi-\eps$ alors $d(x,\;\Fximp)\leq D$.
\end{prop}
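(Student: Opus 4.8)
L'approche que je propose est un raisonnement par l'absurde appuy{\'e} sur la compacit{\'e} du bord et sur l'homog{\'e}n{\'e}it{\'e} de l'espace sym{\'e}trique. Fixons $D>0$ pour lequel la conclusion {\'e}chouerait : on disposerait alors, pour chaque entier $k$, de points $\ximk,\xippk\in\bordinf\Es$ de types respectifs $\untype$ et $\untype^\opp$, et d'un point $\xk\in\Es$, tels que $\angle_{\xk}(\ximk,\xippk)\geq \pi-\frac1k$ tandis que $d(\xk,F_{\ximk\xippk})>D$. Je commencerais par v{\'e}rifier que, pour $k$ assez grand, $\ximk$ et $\xippk$ sont effectivement oppos{\'e}s (de sorte que $F_{\ximk\xippk}$ ait un sens). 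L'ensemble $D(\untype,\untype^\opp)$ est fini et admet $\pi$ pour plus grand {\'e}l{\'e}ment ; posons $\delta=\max\bigl(D(\untype,\untype^\opp)\setminus\{\pi\}\bigr)<\pi$. Comme il existe un plat maximal $A$ avec $\ximk,\xippk\in\bordinf A$, on a $\angleTits(\ximk,\xippk)\in D(\untype,\untype^\opp)$, et $\angleTits(\ximk,\xippk)\geq\angle_{\xk}(\ximk,\xippk)\geq\pi-\frac1k$ ; d{\`e}s que $\frac1k<\pi-\delta$, ceci force $\angleTits(\ximk,\xippk)=\pi$, donc l'opposition.

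Quitte {\`a} faire agir un automorphisme $\hk\in G=\Aut(\Es)$ (qui agit transitivement sur $\Es$) et {\`a} renommer, on peut supposer $\xk=\xo$ pour tout $k$ : angles, distances, types et opposition sont pr{\'e}serv{\'e}s. Le bord $\bordinf\Es$ {\'e}tant compact (car $\Es$ est propre), on extrait des sous-suites $\ximk\to\xim$ et $\xippk\to\xipp$, de types respectifs $\untype$ et $\untype^\opp$ (le type se conservant par passage {\`a} la limite dans le bord). Comme $\Es$ est une vari{\'e}t{\'e} de Hadamard, l'application qui {\`a} un point du bord associe la direction en $\xo$ du rayon g{\'e}od{\'e}sique correspondant est un hom{\'e}omorphisme de $\bordinf\Es$ sur la sph{\`e}re unit{\'e} tangente $U_\xo\Es$ ; l'angle $\angle_\xo$ est donc \emph{continu} sur $\bordinf\Es\times\bordinf\Es$, d'o{\`u} $\angle_\xo(\xim,\xipp)=\lim_k\angle_\xo(\ximk,\xippk)=\pi$. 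Les deux rayons issus de $\xo$ vers $\xim$ et $\xipp$ se recollent alors en une g{\'e}od{\'e}sique (propri{\'e}t{\'e} {\'e}l{\'e}mentaire des espaces $\CAT(0)$), ce qui montre que $\xim$ et $\xipp$ sont oppos{\'e}s et que $\xo\in\Fximp$.

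Il reste {\`a} contredire $d(\xo,F_{\ximk\xippk})>D$ en exhibant des points de $F_{\ximk\xippk}$ proches de $\xo$. J'utiliserais ici l'homog{\'e}n{\'e}it{\'e} : $G$ agit transitivement sur les couples de points oppos{\'e}s de types $(\untype,\untype^\opp)$, et l'application orbitale $\mu : g\mapsto(g\xim,g\xipp)$, de $G$ vers l'ensemble (ouvert dans le produit des deux vari{\'e}t{\'e}s de drapeaux) de ces couples oppos{\'e}s, est une submersion lisse v{\'e}rifiant $\mu(\id)=(\xim,\xipp)$ ; elle admet donc une section continue au voisinage de $(\xim,\xipp)$ valant $\id$ en ce point. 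On obtient ainsi une suite $\gk\to\id$ dans $G$ avec $\gk\xim=\ximk$ et $\gk\xipp=\xippk$. Comme $\gk$ est un automorphisme envoyant $\xim,\xipp$ sur $\ximk,\xippk$, il envoie le faisceau $\Fximp$ sur $F_{\ximk\xippk}$ ; en particulier $\gk\xo\in F_{\ximk\xippk}$, d'o{\`u}
$$d(\xo,F_{\ximk\xippk})\leq d(\xo,\gk\xo)\tend 0\quad(k\tend\pinfty),$$
ce qui contredit $d(\xo,F_{\ximk\xippk})>D$ et ach{\`e}ve la preuve.

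\textit{Principale difficult{\'e}.} Le point d{\'e}licat est le dernier : les faisceaux $F_{\ximk\xippk}$ ne d{\'e}pendent pas continument de $(\ximk,\xippk)$ en g{\'e}n{\'e}ral lorsque $(\ximk,\xippk)\to(\xim,\xipp)$, car l'opposition est une condition ouverte et non ferm{\'e}e ; c'est pr{\'e}cis{\'e}ment l'homog{\'e}n{\'e}it{\'e} de $\Es$, via le rel{\`e}vement $\gk\to\id$, qui permet de contr{\^o}ler $d(\xo,F_{\ximk\xippk})$. L'autre ingr{\'e}dient clef est la continuit{\'e} de l'angle $\angle_\xo$ sur le bord, propre aux vari{\'e}t{\'e}s riemanniennes de Hadamard (elle est en d{\'e}faut dans les immeubles, o{\`u} l'{\'e}nonc{\'e} analogue, plus fort, repose au contraire sur la rigidit{\'e} des angles).
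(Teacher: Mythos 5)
Votre d{\'e}monstration est correcte, mais sa seconde moiti{\'e} suit une route r{\'e}ellement diff{\'e}rente de celle du papier. Les deux preuves d{\'e}butent de la m{\^e}me fa{\c c}on (raisonnement par l'absurde, opposition de $\ximk$ et $\xippk$ pour $k$ assez grand via la finitude de $D(\untype,\untype^\opp)$), mais les normalisations divergent ensuite. Le papier applique des automorphismes pour rendre constantes les suites $\ximk$, $\xippk$ \emph{et} la projection $\yk$ de $\xk$ sur le faisceau (transitivit{\'e} de $G$ sur les couples oppos{\'e}s de types donn{\'e}s, puis du stabilisateur d'un tel couple sur $\Fximp$) ; il introduit le point $\zk\in[\xk,y]$ {\`a} distance $D$ de $y$, montre par comparaison $\CAT(0)$ que $\angle_{\zk}(y,\xim)$ et $\angle_{\zk}(y,\xipp)$ tendent vers $\pi/2$, puis conclut en extrayant une limite $z$ v{\'e}rifiant $d(z,\Fximp)=D$ et $\angle_z(\xim,\xipp)=\pi$, ce qui est impossible. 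Vous normalisez au contraire $\xk=\xo$, extrayez des limites $\xim,\xipp$ dans le bord (compacit{\'e} de $\bordinf\Es$, continuit{\'e} de l'angle en $\xo$, propre aux vari{\'e}t{\'e}s de Hadamard), puis faites appel {\`a} la structure de groupe de Lie : l'orbite des couples oppos{\'e}s est ouverte dans le produit des vari{\'e}t{\'e}s de drapeaux et l'application orbitale admet des sections locales continues, d'o{\`u} $\gk\tend\id$ avec $\gk(\xim,\xipp)=(\ximk,\xippk)$, et $d(\xo,F_{\ximk\xippk})\leq d(\xo,\gk\xo)\tend 0$. Ce que chaque approche apporte : celle du papier reste dans le cadre m{\'e}trique ($\CAT(0)$ et homog{\'e}n{\'e}it{\'e} des faisceaux), au prix d'un passage {\`a} la limite sur les angles plus d{\'e}licat ; la v{\^o}tre d{\'e}l{\`e}gue la difficult{\'e} {\`a} des faits standards mais ext{\'e}rieurs {\`a} la bo{\^\i}te {\`a} outils du papier, {\`a} savoir l'ouverture de la transversalit{\'e} (qui se v{\'e}rifie par $\goth{p}^-+\goth{p}^+=\goth{g}$, de sorte que l'application orbitale est une submersion) et l'existence de sections locales de $G\fleche G/L$, ainsi qu'{\`a} l'identification --- implicite chez vous comme dans le papier --- de la topologie des c{\^o}nes sur $(\bordinf\Es)_\untype$ avec la topologie de vari{\'e}t{\'e} de $G/P$. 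Ces deux points m{\'e}riteraient chacun une ligne de justification, mais ne constituent pas des lacunes ; votre remarque finale sur l'{\'e}chec de cette m{\'e}thode dans les immeubles (o{\`u} la continuit{\'e} des angles fait d{\'e}faut) est pertinente, la proposition n'y {\'e}tant de toute fa{\c c}on pas n{\'e}cessaire gr{\^a}ce {\`a} la rigidit{\'e} des angles.
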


\begin{preuve} 
Sinon il existe $D>0$ et pour tout $k\in\NN$ un point $\xk\in\Es$ et
$\xippk,\ximk$ dans $\bordinf\Es$ de types respectifs $\untype$ et
$\untype^\opp$ tels que $\angle_{\xk}(\ximk,\xippk)\tend \pi$ et
$d(\xk,F_{\ximk\xippk})\geq D$.
Pour $k$ assez grand, $F_{\ximk\xippk}$ est non vide (car
$\angle_x(\xim, \xipp) > \max ( D(\untype,\untype^\opp) - \{\pi\})$
implique  $\angle_T(\xim,\xipp) = \pi$).
Soit $\yk$ la projection de $\xk$ sur $F_{\ximk\xippk}$. 
Quitte {\`a} appliquer des automorphismes, on peut supposer
que les suites  $\yk$, $\ximk$ et $\xippk$ sont constantes,
respectivement {\'e}gales {\`a} $y$, $\xim$ et
$\xipp$.
Soit $\zk\in [\xk, y]$ tel que $d(\zk,y) = D$.
On a $\angle_{\xk}(y,\xim)\leq \angle_{\zk}(y,\xim)\leq {\pi\over 2}$ et
$\angle_{\xk}(y,\xipp)\leq \angle_{\zk}(y,\xipp)\leq {\pi\over 2}$.
Comme $\angle_{\xk}(\xim,\xipp)\leq
\angle_{\xk}(y,\xim)+\angle_{\xk}(y,\xipp)$, on a
donc $\angle_{\zk}(y,\xim)\tend{\pi\over 2}$ ainsi que 
$\angle_{\zk}(y,\xipp)\tend{\pi\over 2}$.
Quitte {\`a} extraire on a 
$\zk\tend z$ avec $d(z,\Fximp )= D>0$ et
$\angle_{z}(\xim,\xipp)=\pi$, impossible.
\end{preuve}

\paragraph{Ombres vues de l'infini.}
\begin{prop} 
\label{prop- ombres forment base de vois} 
Soit $\xi\in\bordinf\Es$ et $r$ une g{\'e}od{\'e}sique telle que
$r(\pinfty)=\xi$. Notons  $\xim=r(\minfty)$. Les ombres 
$\OmbxyD {\xim} {r(t)} D$, avec $t\in\RR$ et $D>0$, forment une base de
voisinages de $\xi$ dans $(\bordinf\Es)_\untype$, o{\`u} $\untype$ est le
type de $\xi$.
\end{prop}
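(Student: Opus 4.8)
Le plan est de vérifier les deux propriétés caractérisant une base de voisinages dans $(\bordinf\Es)_\untype$ : (a) chaque ombre $\OmbxyD{\xim}{r(t)}{D}$ (avec $t\in\RR$, $D>0$) est un voisinage de $\xi$, et (b) ces ombres sont arbitrairement petites. Observons d'abord que toute ombre est contenue dans $(\bordinf\Es)_\untype$ : ses points sont les extrémités $\eta$ de géodésiques issues de $\xim$, donc opposées à $\xim$ ; or $\xim=r(\minfty)$ est de type $\untype^\opp$, si bien que $\eta$ est de type $\untype$. On travaille avec la topologie des cônes basée au point $o=r(0)$, pour laquelle le rayon vers $\xi$ est $r_{|\RRp}$ et celui vers $\xim$ est $s\mapsto r(-s)$ ; pour $\eta\in\bordinf\Es$ on note $c_\eta$ le rayon issu de $o$ d'extrémité $\eta$. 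Deux faits structurels serviront de pivot : d'une part la fonction $t\mapsto d(r(t),\rho)$ est convexe pour toute géodésique $\rho$ (distance d'un point parcourant $r$ au convexe $\rho$) ; d'autre part, par le théorème de la bande plate (voir \cite{BrHa}), deux géodésiques parallèles partagent leurs deux extrémités, de sorte que le faisceau $F_{\xim\eta}$ est exactement la réunion des géodésiques joignant $\xim$ à $\eta$.

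Pour (a), par première dénombrabilité il suffit de montrer que si $\eta_k\tend\xi$ dans $(\bordinf\Es)_\untype$, alors $\eta_k\in\OmbxyD{\xim}{r(t)}{D}$ pour $k$ grand. La convergence au sens des cônes donne $\angle_o(\xi,\eta_k)\tend0$, d'où $\angle_o(\xim,\eta_k)\tend\pi$ puisque $\angle_o(\xim,\xi)=\pi$. Par la proposition \ref{prop- points visuellement presque opposes} (espaces symétriques) ou par rigidité des angles (immeubles), $\eta_k$ est opposé à $\xim$ pour $k$ grand et $d(o,F_{\xim\eta_k})\tend0$. Grâce au fait structurel ci-dessus, le point de $F_{\xim\eta_k}$ le plus proche de $o$ est sur une géodésique $\rho_k$ joignant $\xim$ à $\eta_k$, donc $d(o,\rho_k)\tend0$. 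Comme $c_{\eta_k}$ et $\rho_k$ sont asymptotes en $\eta_k$, la fonction $s\mapsto d(c_{\eta_k}(s),\rho_k)$ est décroissante, majorée par $d(o,\rho_k)$ ; on en tire $d(r(t),\rho_k)\leq d(r(t),c_{\eta_k}(t))+d(c_{\eta_k}(t),\rho_k)\leq d(r(t),c_{\eta_k}(t))+d(o,\rho_k)$. Le premier terme tend vers $0$ (convergence des cônes, $t$ fixé) et le second aussi, donc $d(r(t),\rho_k)\leq D$ pour $k$ grand, c'est-à-dire $\eta_k\in\OmbxyD{\xim}{r(t)}{D}$.

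Pour (b), soit $V$ un voisinage de $\xi$ au sens des cônes, disons $\{\eta:\ d(c_\eta(R),r(R))<\eps\}$. Je choisirais $t>R$ grand et $D>0$ petit, et montrerais $\OmbxyD{\xim}{r(t)}{D}\subset V$. Si $\eta$ est dans cette ombre, une géodésique $\rho$ joignant $\xim$ à $\eta$ vérifie $d(r(t),\rho)\leq D$ ; comme $\rho$ et $r$ ont la même extrémité $\xim=r(\minfty)$, la fonction convexe $t'\mapsto d(r(t'),\rho)$ reste bornée en $\minfty$, donc est croissante, et $d(r(t'),\rho)\leq D$ pour tout $t'\leq t$ ; en particulier $d(o,\rho)\leq D$. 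Ainsi $\rho$ passe à distance $\leq D$ de $o=r(0)$ et de $r(t)$ : elle est presque parallèle au segment $r_{|[0,t]}$, ce qui (comme dans la proposition \ref{prop- faisceau stable et point attractif}) majore $\angle_o(\xi,\eta)$ par une quantité de l'ordre de $D/t$. Pour $D/t$ assez petit on obtient $d(c_\eta(R),r(R))<\eps$, donc $\eta\in V$.

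La principale difficulté est l'étape (a). Le point délicat est de transformer l'information angulaire $\angle_o(\xim,\eta_k)\tend\pi$ en une géodésique joignant $\xim$ à $\eta_k$ \emph{proche de $o$} : la proposition \ref{prop- points visuellement presque opposes} ne fournit a priori que la proximité au faisceau $F_{\xim\eta_k}$, mais l'observation que toutes les géodésiques de ce faisceau joignent $\xim$ à $\eta_k$ (extrémités communes des parallèles) la rend suffisante. Les deux cas (symétrique et immeuble) se traitent uniformément, la proposition \ref{prop- points visuellement presque opposes} étant remplacée dans le cas des immeubles par la rigidité des angles. Le reste repose sur la convexité de $t\mapsto d(r(t),\rho)$ et sur la décroissance de la distance entre rayons asymptotes, qui sont des estimations $\CAT(0)$ routinières.
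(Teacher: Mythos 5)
Your part (a) is correct (and is essentially a sequential rephrasing of one of the paper's two inclusions, resting, like the paper's, on Proposition \ref{prop- points visuellement presque opposes}, or on angle rigidity in the building case). The genuine gap is in your part (b), at the final step: from ``$\rho$ passes within $D$ of $o=r(0)$ and of $r(t)$'' you bound the \emph{Alexandrov} angle $\angle_o(\xi,\eta)$ by a quantity of order $D/t$, and then conclude that $d(c_\eta(R),r(R))<\eps$. In a $\CAT(0)$ space the Alexandrov angle is a \emph{lower} bound for comparison angles, not an upper bound, so a small -- even zero -- angle $\angle_o(\xi,\eta)$ gives no control whatsoever on $d(c_\eta(R),r(R))$. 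This is fatal precisely in the case of complete affine buildings, which the proposition covers and which you claim to treat uniformly: in a building (already in a tree, or in any asymptotic cone $\Esom$), two \emph{distinct} points $\xi\neq\eta$ of $\bordinf\Es$ of the same type $\untype$ can satisfy $\angle_o(\xi,\eta)=0$, namely when their rays from $o$ share an initial segment before branching (angle rigidity allows this, since $0\in D(\untype,\untype)$). For such an $\eta$, no bound on the Alexandrov angle can ever place it in the prescribed cone neighbourhood $V$. (In a symmetric space the step is harmless, because there the cone topology on $\bordinf\Es$ coincides with the topology of the angle at $o$; but that is exactly the non-uniform part of the argument.)

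The inclusion you want in (b) is nevertheless true, and it follows from what you had already derived, with no angles at all: you know $d(r(t'),\rho)\leq D$ for all $t'\leq t$, and by the same convexity-plus-asymptote argument you used in (a), $d(c_\eta(s),\rho)\leq d(o,\rho)\leq D$ for all $s\geq 0$. Orienting $\rho$ towards $\eta$ and using that the projection onto $\rho$ is $1$-Lipschitz, one checks that for $2D<R\leq t$ the projections of $r(R)$ and of $c_\eta(R)$ both lie at parameter between $R-2D$ and $R$ past the projection of $o$ (the lower bound and the sign come from $d(o,r(R))=d(o,c_\eta(R))=R$ together with the fact that the projections of $r(t')$ go to $-\infty$ as $t'\tend\minfty$ and those of $c_\eta(s)$ to $+\infty$ as $s\tend\pinfty$); hence $d(c_\eta(R),r(R))\leq 4D$, and $D<\eps/4$, $t\geq R$ suffice -- no condition on $D/t$ is needed. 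This repaired step is exactly the inclusion that the paper disposes of with ``on v{\'e}rifie facilement que $\OmbxyD{\xim}{r(t)}{\;\frac{1}{2}D}\subset\OmbxyD{x}{r(t)}{D}$'', the shadows $\OmbxyD{x}{r(t)}{D}$ seen from the interior point $x=r(0)$ being the standard basis of the cone topology.
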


\begin{preuve} 
On a clairement toujours $\OmbxyD {\xim} {r(t)} D \subset
(\bordinf\Es)_\untype$.
Soit $x=r(0)$.  
Les ombres $\OmbxyD {x} {r(t)} D$, avec
$t\in\RR^+$ et $D>0$, forment une base de voisinages de $\xi$ dans
$\bordinf \Es$, par d{\'e}finition de la topologie des c{\^o}nes.
Soient $t\in\RRpet$ et $D>0$.  On v{\'e}rifie facilement que $\OmbxyD
{\xim} {r(t)} {\;\frac{1}{2}D} \subset \OmbxyD {x} {r(t)} D$.  
Soit $\eps>0$ donn{\'e} par la proposition 
\ref{prop- points visuellement presque opposes}. 
Soit $0<D'\leq \tan(\eps)\; t$. Alors $ \OmbxyD {x} {r(t)} {D'}\cap
(\bordinf\Es)_\untype$ est inclus dans $\OmbxyD {\xim} {r(t)} D$.
En effet si $\xi'$ est dans $\OmbxyD {x} {r(t)} {D'} \cap G\xi$, alors
\[\angle_x(r(t),\xi') \leq \angletilde_x(r(t),\xi') \leq \arctan
(D'/t)\leq \eps\] 
D'apr{\`e}s la proposition \ref{prop-
  points visuellement presque opposes}, comme $\xi'$ est de m{\^e}me type
que $\xi$ et $\xim$ de type oppos{\'e}, on a alors $d(x,F_{\xim \xi'})\leq
D$, c'est-{\`a}-dire $\xi'\in \OmbxyD {x} {r(t)} D$.
\end{preuve}

\subsubsection{Isom{\'e}tries}
\label{ss- prel isom}
Dans les immeubles affines complets, on a le r{\'e}sultat suivant (qu'on
utilisera de mani{\`e}re cruciale en section \ref{s- VdT}).

\begin{theo}{\cite[Th{\'e}or{\`e}me 4.1]{ParImm})}
\label{theo- immob} 
Soit $\Es$ un immeuble affine complet. Il existe
$K>0$ tel que pour tout $g\in\Aut(\Es)$ et pour tout
$x\in \Es$, on a $d(x, \Min(g))\leq K d(x,gx)$. En particulier $g$ est
semisimple.  \end{theo}

\medskip

Dans les espaces sym{\'e}triques, la situation n'est pas si simple mais
la d{\'e}\-com\-po\-si\-tion de Jordan va nous permettre d'{\'e}tablir quelques
propri{\'e}t{\'e}s 
dont on aura besoin en section \ref{s- VdT}.

\bigskip

On suppose d{\'e}sormais que $\Es$ est un espace sym{\'e}trique.

\paragraph{D{\'e}composition de Jordan.}
On renvoie {\`a} \cite[2.19]{Ebe} pour plus de d{\'e}tails et pour le rapport avec
la d{\'e}composition de Jordan usuelle dans le groupe lin{\'e}aire, via la
repr{\'e}sentation adjointe de $G=\Aut(\Es)$.

\label{ss- transvections}
Si $r$ est une g{\'e}od{\'e}sique de $\Es$ et $t\in \RR$, il existe 
$g\in G$, appel{\'e} {\em transvection} le long de $r$
de longueur $t$, r{\'e}alisant le transport parall{\`e}le le long de $r$ de
$r(s)$ {\`a} $r(s+t)$, voir \cite[IV, thm 3.3 et remarque]{Hel}.
Si $t\neq 0$ on a $\Min(g)=\Fr$.
De plus \cite[Prop. 4.1.4]{Ebe} le centralisateur $Z_G(g)$ de $g$ dans
$G$ agit transitivement sur $\Min(g)$.
On dit que $g\in G$ est {\em hyperbolique} si $g$ est une transvection
\cite[2.19.21]{Ebe}.

On dira  que $g\in G$ est {\em unipotent} 
si l'adh{\'e}rence de sa classe de conjugaison dans $G$
contient l'identit{\'e} \cite[2.19.27]{Ebe}.
Alors on a clairement $\ell(g)=0$.

\label{ss- prel ES - dec de jordan}
Pour tout $g\in G$, il existe  une  unique d{\'e}composition
$g=heu$ dans $G$, appel{\'e}e la {\em d{\'e}composition de Jordan} de $g$, telle que
 $e$, $h$ et $u$ sont respectivement elliptique, hyperbolique et
unipotent, et commutent deux {\`a} deux \cite[2.19.24]{Ebe}.
On v{\'e}rifie ais{\'e}ment que si $h=\id$ alors $\ell(g)=0$.

\paragraph{Isom{\'e}tries des espaces sym{\'e}triques.}
Soit  
$g\in \Aut(\Es)$. 
Soit $g=heu$ la d{\'e}composition de Jordan de $g$.
Notons $\phi=eu$ et $\Fg=\Min(h)$, qui est stable par $h$,  $e$ et $u$.
Si $\ell(g)>0$ on note $r$ une g{\'e}od{\'e}sique translat{\'e}e par $h$ (alors
$\Fg=\Fr$).

\begin{prop}
\label{prop- d_g(x) si x dans Min(h)}
\label{prop- prel ES - l(g)=l(h)}
On suppose que $\ell(g)>0$.
En restriction {\`a} $\Fg=\Cr \times r$ 
on a $h = (\id,\ell(g))$, $\phi=(\phi, 0)$ et $g=(\phi,\ell(g))$.
En particulier $\ell(g)=\ell(h)$
et $g$ poss{\`e}de  des points fixes attractif et r{\'e}pulsif {\`a} l'infini, qui
sont les extr{\'e}mit{\'e}s de $r$.
\end{prop}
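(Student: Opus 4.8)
The plan is to carry out the whole argument inside the single product decomposition $\Fg=\Cr\times r$ and to reduce the quantitative claims to two applications of (\ref{eq- faisceau stable et ell(g)}). First I would record the action of $h$: since $\ell(g)>0$, the hyperbolic part cannot be trivial (a trivial hyperbolic part forces $\ell(g)=0$, as noted above), so $h$ is a non-trivial transvection along $r$ with $\ell(h)>0$ and $\Min(h)=\Fr=\Fg$. Writing $h=(h',\ell(h))$ on $\Fr=\Cr\times r$ and using $\Min(h)=\Min(h')\times r=\Cr\times r$ gives $\Min(h')=\Cr$, i.e. $h'=\id$; hence $h=(\id,\ell(h))$ on $\Fg$.

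Next I would show that $e$ and $u$ respect this same decomposition with no translation along $r$. As $h$ is axial, it has attractive and repulsive fixed points $\xipp=r(\pinfty)$ and $\xim=r(\minfty)$ at infinity (Remarque \ref{rema- pf attractif ES et IMM}). Since $e$ commutes with $h$ and the action extends continuously to $\ov{\Es}$, one gets $e(\xipp)=e(\lim_k h^kx)=\lim_k h^k(ex)=\xipp$, and likewise $e(\xim)=\xim$; the same holds for $u$. Thus $e$ and $u$ fix the opposite pair $\xipp,\xim$, preserve $\Fximp=\Fr$, and decompose there as $e=(e',a)$ and $u=(u',b)$. Because $\ell(e)=\ell(u)=0$ (elliptic, resp.\ unipotent), while for $x\in\Fr$ the $r$-component forces $\frac1k d(x,e^kx)\geq|a|$, the inequality in (\ref{eq- d(x, g^k x)}) yields $|a|\leq\ell(e)=0$, so $a=0$, and similarly $b=0$. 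Hence $\phi=eu=(\phi',0)$ on $\Fg$ with $\phi'=e'u'$, and $g=h\phi=(\phi',\ell(h))$.

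The crux is then $\ell(\phi')=0$. Here $\phi=\id\cdot e\cdot u$ is the Jordan decomposition of $\phi$, whose hyperbolic part is $\id$, so $\ell(\phi)=0$; applying (\ref{eq- faisceau stable et ell(g)}) to $\phi=(\phi',0)$, which sends each geodesic parallel to $r$ to a parallel one, gives $\ell(\phi')=\ell(\phi)=0$. The same formula applied to $g=(\phi',\ell(h))$, which also maps the geodesics parallel to $r$ to parallel ones, then yields $\ell(g)=\ell(g_{|\Fr})=\sqrt{\ell(\phi')^2+\ell(h)^2}=\ell(h)$. This establishes $h=(\id,\ell(g))$, $\phi=(\phi',0)$ and $g=(\phi',\ell(g))$ on $\Fg$, together with $\ell(g)=\ell(h)$. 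Finally, since $g$ fixes the opposite points $\xipp,\xim$, with $g=(\phi',\ell(g))$, $\ell(g)>0$ and $\ell(\phi')=0$, Proposition \ref{prop- critere pf attractif} identifies the attractive and repulsive fixed points of $g$ at infinity with $\xipp$ and $\xim$, the endpoints of $r$.

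The main obstacle is the middle step: one must verify that the elliptic and unipotent factors fix the endpoints of the axis and carry no translation along $r$, so that $h$, $\phi$ and $g$ all live in the one product $\Cr\times r$. Once this is secured, the two invocations of (\ref{eq- faisceau stable et ell(g)}) — first for $\phi$ with vanishing $r$-translation, then for $g$ — make the remainder routine.
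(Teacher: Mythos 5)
Your argument is correct and follows essentially the same route as the paper's proof: both hinge on the observation that $\phi=eu$ has trivial hyperbolic part, hence $\ell(\phi)=0$, combined with formula (\ref{eq- faisceau stable et ell(g)}) on $\Fr=\Cr\times r$ and with Proposition \ref{prop- critere pf attractif} for the fixed points at infinity. The extra work in your middle step is not needed — the paper obtains $t=0$ and $\ell(\phi')=0$ simultaneously from $0=\ell(\phi)=\sqrt{\ell(\phi')^2+t^2}$ — and in your first step note that $\Min(h')=\Cr$ alone does not force $h'=\id$ (a Clifford translation has full minimal set); you also need $\ell(h')=0$, which your normalization $h=(h',\ell(h))$ together with the same formula does provide.
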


\begin{preuve}
Sur $\Min(h)=\Fr$ on a  $h=(\id,\ell(h))$ avec $\ell(h)>0$,
$\phi=(\phi',t)$.
On a alors $0=\ell(\phi)=\sqrt{\ell(\phi')^2+t^2}$, donc $t=0$ et
$\phi'=\phi$ sur $C_r$.  
On a donc $g=(\phi,\ell(h))$ sur $\Fg$, 
donc $\ell(g)=\sqrt{\ell(\phi)^2+\ell(h)^2}=\ell(h)$.
On conclut par la proposition \ref{prop- critere pf attractif}.
\end{preuve}

On aura besoin en section \ref{s- VdT} de contr{\^o}ler la distance {\`a}
$\Min(h)$ en fonction du d{\'e}placement, avec le lemme suivant.

\begin{lemm} 
\label{lemm- d_g et distance a F_g}
Pour tout $\eps> 0$ il existe $\eta >0$ tel que si $x\in\Es$ v{\'e}rifie
$d_g(x) \leq \ell(g)+\eta$, et $y$ est la projection de $x$ sur 
$\Fg$, alors

\begin{enumerate}

\item  \label{item- d(gy,hy)}
$d(gy,hy)\leq \eps$.

\item \label{item- d(x,Min(h))}
 $d(x,\Min(h)) \leq \eps$

\end{enumerate}
\end{lemm}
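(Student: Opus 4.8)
Notons $y$ la projection (orthogonale) de $x$ sur le convexe $\Fg=\Cr\times r$. Comme $g$ est une isométrie préservant $\Fg$, cette projection commute à $g$, donc $d_g(y)=d(y,gy)=d(\text{proj}_{\Fg}x,\text{proj}_{\Fg}gx)\leq d(x,gx)=d_g(x)\leq\ell(g)+\eta$. D'après la proposition \ref{prop- d_g(x) si x dans Min(h)} on a, en restriction à $\Fg$, $g=(\phi,\ell(g))$ et $h=(\id,\ell(g))$; en écrivant $y=(y',s)$, la métrique produit donne $d(gy,hy)=d(\phi y',y')$ et $d_g(y)^2=d(\phi y',y')^2+\ell(g)^2$. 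Par conséquent $d(gy,hy)^2=d_g(y)^2-\ell(g)^2\leq 2\ell(g)\eta+\eta^2$, ce qui établit (1) pourvu que $\eta$ soit assez petit (on suppose ici $\ell(g)>0$, le cas $\ell(g)=0$, où $h=\id$ et $\Fg=\Es$, étant trivial).

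\textbf{Démonstration de (2) : réduction par convexité.} On raisonne par l'absurde : supposons qu'il existe $\eps>0$ et une suite $(\xk)$ avec $d_g(\xk)\to\ell(g)$ mais $d(\xk,\Fg)>\eps$. Soit $\yk$ la projection de $\xk$ sur $\Fg$ et $\sigma_k$ la géodésique unitaire allant de $\yk$ à $\xk$, orthogonale à $\Fg$. La fonction $f_k=d_g\circ\sigma_k$ est convexe, vérifie $f_k(0)=d_g(\yk)\geq\ell(g)$ et $f_k(d(\xk,\Fg))=d_g(\xk)\to\ell(g)$, et sa dérivée à droite en $0$ vaut $-\cos\angle_{\yk}(\xk,g\yk)-\cos\angle_{g\yk}(g\xk,\yk)=0$, les deux angles valant $\pi/2$ puisque $\sigma_k$ et $g\sigma_k$ sont orthogonales à $\Fg$. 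Ainsi $f_k$ est croissante sur $[0,d(\xk,\Fg)]$, et le point $\zk=\sigma_k(\eps)$, à distance \emph{exactement} $\eps$ de $\Fg$ et se projetant en $\yk$, vérifie $\ell(g)\leq d_g(\zk)=f_k(\eps)\leq f_k(d(\xk,\Fg))\to\ell(g)$, donc $d_g(\zk)\to\ell(g)$.

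\textbf{Démonstration de (2) : normalisation et passage à la limite.} Comme $h$ est une transvection, $Z_G(h)$ agit transitivement sur $\Min(h)=\Fg$ en fixant $\xipp,\xim$ et en préservant la translation $\ell(g)$ le long de $r$. Choisissons $c_k\in Z_G(h)$ avec $c_k\yk=\xo$ ($\xo\in\Fg$ fixé) et posons $\zk'=c_k\zk$ et $\gpk=c_kgc_k^{-1}$. Alors $\gpk$ a même partie hyperbolique $h$, fixe $\xipp,\xim$, agit sur $\Fximp=\Fg$ avec translation $\ell(g)$ le long de $r$, et $d_{\gpk}(\zk')=d_g(\zk)\to\ell(g)$. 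Le point $\zk'$ est à distance $\eps$ de $\Fg$ et se projette en $\xo$, donc appartient à la sphère normale (compacte) de rayon $\eps$ en $\xo$. Comme $d_{\gpk}(\xo)\leq d_{\gpk}(\zk')\to\ell(g)$, la propreté de l'action rend $(\gpk)$ d'adhérence compacte ; quitte à extraire, $\zk'\to z_\infty$ avec $d(z_\infty,\Fg)=\eps$ et $\gpk\to g_\infty$. Par la proposition \ref{prop- type_gk cv vers type_g} et la semicontinuité \ref{item- limsup lgk leq lg} du lemme \ref{lemm- Min(gk) reste a distance bornee}, on a $\ell(g)=\lim\ell(\gpk)\leq\ell(g_\infty)\leq d_{g_\infty}(\xo)=\lim d_{\gpk}(\xo)=\ell(g)$, d'où $\xo\in\Min(g_\infty)$ et $\ell(g_\infty)=\ell(g)>0$ ; de même $d_{g_\infty}(z_\infty)=\lim d_{\gpk}(\zk')=\ell(g)$, donc $z_\infty\in\Min(g_\infty)$. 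Or $g_\infty$ fixe les points opposés $\xipp,\xim$ et agit sur $\Fximp=\Fg$ avec translation $t=\ell(g)=\ell(g_\infty)$ le long de $r$ ; la proposition \ref{prop- critere pf attractif} montre alors que $\xipp,\xim$ sont ses points fixes attractif et répulsif, et la proposition \ref{prop- d_g(x) si x dans Min(h)} donne $\Min(g_\infty)\subset\Fximp=\Fg$. Ainsi $z_\infty\in\Fg$, ce qui contredit $d(z_\infty,\Fg)=\eps>0$.

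\textbf{Sur la difficulté principale.} La partie (1) est immédiate une fois remarqué que la projection sur $\Fg$ ne fait pas croître le déplacement. Pour (2), l'obstacle réel est la compacité : la suite $(\xk)$ peut fuir à l'infini (y compris transversalement, $d(\xk,\Fg)$ pouvant tendre vers $\pinfty$), de sorte qu'aucune extraction directe n'est possible. C'est précisément la réduction aux points $\zk$ situés à distance \emph{exactement} $\eps$ de $\Fg$, rendue licite par l'annulation de la dérivée première de $d_g$ transversalement à $\Fg$ (première variation), qui ramène la situation à une sphère normale compacte et permet le passage à la limite ; l'identification de la limite $g_\infty$ repose ensuite entièrement sur les propositions \ref{prop- critere pf attractif} et \ref{prop- d_g(x) si x dans Min(h)}.
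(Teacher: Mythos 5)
Your part (1) is exactly the paper's argument: the projection onto the $g$-invariant convex set $\Fg$ does not increase displacement, and Proposition \ref{prop- d_g(x) si x dans Min(h)} turns $d_g(y)\leq\ell(g)+\eta$ into $d(gy,hy)^2\leq 2\ell(g)\eta+\eta^2$. For part (2) your route is genuinely different, although both proofs argue by contradiction and both use the same normalization (transitivity of $Z_G(h)$ on $\Min(h)$ to send the projections $\yk$ to a fixed basepoint). The paper keeps the possibly far-away points $\xk$: it takes the rays from the basepoint through them, extracts limit rays $r$ and $r'=\lim\gk\rk$ orthogonal to $\Min(h)$, shows by convexity that $d(r(t),r'(t))=\ell(g)=d(y,hy)$ for $0<t<\eps$, and concludes that the geodesic through $r(t)$ and $r'(t)$ is parallel to the axis through $y$ and $hy$, forcing $r(t)\in\Min(h)$ and contradicting orthogonality. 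You instead first truncate: the Riemannian first variation formula (both angles at $\yk$ and at $g\yk$ being $\pi/2$) plus convexity of $d_g$ makes $d_g\circ\sigma_k$ non-decreasing, so you may replace $\xk$ by points $\zk$ at distance exactly $\eps$ from $\Fg$; then properness of the action lets you extract a limit isometry $g_\infty$ and a limit point $z_\infty\in\Min(g_\infty)$ with $d(z_\infty,\Fg)=\eps$, and the contradiction comes from $\Min(g_\infty)\subset\Fximp$. Both devices answer the same difficulty (the $\xk$ may escape to infinity, even transversally): the paper via compactness of the space of rays issuing from a point, you via compactness of the normal sphere of radius $\eps$. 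Your finish is more dynamical (limits of group elements, Proposition \ref{prop- critere pf attractif}); the paper's is more elementary, a flat-strip argument needing neither limits in $G$ nor the first variation formula.

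One citation should be tightened. Proposition \ref{prop- d_g(x) si x dans Min(h)} does not by itself give $\Min(g_\infty)\subset\Fximp$: applied to $g_\infty$ and its own Jordan decomposition $g_\infty=h_\infty e_\infty u_\infty$, it only locates $\Min(h_\infty)$, and the inclusion $\Min(g_\infty)\subset\Min(h_\infty)$ is essentially the statement being proved, so this reference is circular here. The correct justification, available from what you have already established, is: since $\xo\in\Min(g_\infty)$ and $\ell(g_\infty)=\ell(g)>0$, the isometry $g_\infty$ is axial and $\Min(g_\infty)$ is the union of its axes \cite[II.6.8]{BrHa}; the endpoints of any axis are attractive and repulsive fixed points of $g_\infty$ (Remarque \ref{rema- pf attractif ES et IMM}), hence equal to $\xipp$ and $\xim$ by the uniqueness in D{\'e}finition \ref{defi- pf attractif} combined with your application of Proposition \ref{prop- critere pf attractif}; so every axis, in particular the one through $z_\infty$, lies in $\Fximp=\Fg$. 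With this substitution the argument is complete.
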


\begin{preuve}
C'est trivialement vrai si $\ell(g)=0$, car alors $h=\id$ et
$\Fg=\Min(h)=\Es$. 
Supposons que $\ell(g)>0$, et soit $\eps>0$.
Le point \ref{item- d(gy,hy)} 
d{\'e}coule de $d_g(y)\leq d_g(x)$ et $d_{g}(y)=\sqrt{\ell(g)^2+d(gy,hy)^2}$ 
(proposition \ref{prop- d_g(x) si x dans Min(h)}).

Pour le point \ref{item- d(x,Min(h))},  on raisonne par l'absurde.
Sinon il existe une suite $\xk\in\Es$ telle que 
$d_g(\xk)\tend\ell(g)$ et  $d(\xk,\Min(h)) \geq \eps$.
On note $\yk$ la projection  de
$\xk$ sur $\Min(h)$.
Alors $d(h\yk,g \yk) \tend 0$.
Fixons $y\in\Min(h)$.  Comme le commutateur $Z_G(h)$ de $h$
agit transitivement sur $\Min(h)$, 
on peut choisir $\zk\in Z_G(h)$
telle que $\zk\yk=y$.
Soit $\xpk=\zk\xk$ et $\gk=\zk g \zk^{-1}$. 
Alors $\gk y\tend hy$.
\begin{figure}[h]
\begin{center}
\input{unicite_vdt.pstex_t}
\end{center}
\end{figure}
Soit $\rk$ le rayon g{\'e}od{\'e}sique issu  de $y$ et passant par $\xpk$, 
et $\rpk=\gk\rk$.
Alors quitte {\`a} extraire $\rk \tend r$
avec $r$ orthogonal {\`a} $\Min(h)$ en $y$, et $\rpk\tend\rp$ 
avec $\rp$ orthogonal {\`a}
$\Min(h)$ en $h y$.
Pour $0<t<\eps$ fix{\'e}
et $k$ suffisamment grand, 
on a $\ell(\gk)\leq d(\rk(t),\rpk(t))\leq d(\xpk, \gk \xpk)$.
Or $\ell(\gk)=\ell(g)$ et $d(\xpk, \gk \xpk)=d_{g}(\xk)\tend\ell(g)$.
On a donc $d(r(t),r'(t))=\ell(g)=d(y,hy)$.
 Par cons{\'e}quent,
 la g{\'e}od{\'e}sique passant par $r(t)$ et $r'(t)$ est
parall{\`e}le {\`a} celle passant par $y$ et $hy$, 
qui est translat{\'e}e par $h$.
On a donc $r(t)\in\Min(h)$, 
ce qui est impossible car $r$ est
orthogonal {\`a} $\Min(h)$.
\end{preuve}

\subsection{C{\^o}nes asymptotiques}
\label{ss- prel conas}

On renvoie par exemple {\`a} \cite[2.4]{KlLe} 
pour les r{\'e}sultats rappel{\'e}s et non d{\'e}montr{\'e}s dans cette section.

\paragraph{Ultrafiltres.} Voir \cite[Ch. 1]{BouTG}.
Dans tout cet article, $\om$ d{\'e}signe un ultrafiltre sur $\NN$, plus
fin que le filtre de Fr{\'e}chet.
c'est-{\`a}-dire 
une mesure de probabilit{\'e} finiment additive d{\'e}finie sur toutes les
parties de $\NN$, {\`a} valeurs dans $\{0,1\}$, nulle sur les parties finies.
Une suite $(\xk)_{k\in\NN}$ dans un espace topologique  quelconque $E$
admet
$x\in E$ comme limite suivant l'ultrafiltre $\om$ ({\em \omlimite}) 
si pour tout
voisinage $V$ de $x$ dans $E$, on a $\xk\in V$  pour \omptt{} $k$. 
Cette limite est l'une des valeurs d'adh{\'e}rences de la suite
$(\xk)$, et elle est unique si $E$ est s{\'e}par{\'e}. 
On notera alors $\limom \xk =x$ ou $\xk\tendom x$.
Une suite contenue dans un compact admet toujours une \omlimite{}. Les
\omlimites{} des suites r{\'e}elles sont prises dans le compact $[\minfty,\pinfty]$.
On dit qu'une suite $\xk$ dans $\RR$ est \ommajoree{} (resp. \ombornee)
 si $\limom \xk <\pinfty$ (resp. si $\minfty<\limom \xk <\pinfty$).
On note $\eqom$ la relation d'{\'e}quivalence ``{\'e}galit{\'e} \ompp{}''.

\paragraph{Ultraproduits.}
\label{ss- ultraproduits}
On utilisera dans la section \ref{s- modele alg de Esom} le formalisme suivant.

Si $E$ est un ensemble, 
on notera 
$\et{E}$ son {\em ultraproduit},
c'est-{\`a}-dire l'ensemble  des suites d'{\'e}l{\'e}ments de $E$, 
 modulo $\eqom$.
On note $\classet{\xk}$ la classe de la suite
$(\xk)_{k\in\NN}$ modulo $\eqom$. 
L'ensemble $E$ se plonge canoniquement dans $\et{E}$ via les suites
constantes.  On identifiera $E$ et son image lorsque cela n'engendre
pas de confusion.
Si $E$ et $F$ sont deux ensembles et si $(\fk)_{k\in\NN}$ est une
suite d'applications de $E$ dans $F$, on note $\classet{\fk}$
l'application qui {\`a} $\classet{\xk}\in \et{E}$ associe
$\classet{\fk(\xk)}\in \et{F}$.

Si  $F$ est un espace topologique compact, on note 
$\limom {\fk}: \et{E}\fleche F$ l'application 
$\classet{\xk}\mapsto \limom{\fk(\xk)}$.
 C'est la compos{\'e}e de
$\limom : \et{F} \fleche F$ et de $\classet{\fk}$.

\paragraph{Ultralimites d'espaces m{\'e}triques point{\'e}s.}
Soit $(\Esk,\dk,\ok)_{k\in\NN}$ une suite d'espaces m{\'e}triques point{\'e}s.
On note
$$\Esetp=\{(\xk)_{k\in\NN}\in\prod_{k\in\NN} \Esk
\ |\  \dk(\ok,\xk) \mbox{ est born{\'e}e}\}\; ,$$  
l'ensemble  des suites {\em \ombornees}. 
Il est muni de la pseudo-distance 
$\dom$ d{\'e}finie par  $\dom((\xk),(\yk))=\limom \dk(\xk,\yk)$. 
Le quotient de $\Esetp$ par la relation
``\^etre {\`a} pseudo-distance $\dom$ nulle'' est un espace m{\'e}trique
$(\Esom,\dom)$, point{\'e} en $\oom=[\ok]$, appel{\'e}
{\em ultralimite} de la suite
$(\Esk,\dk,\ok)_{k\in\NN}$.
Pour $(\xk)\in\Esetp$, on note $\classom{\xk}$ le
point de $\Esom$ {\em associ{\'e}}. 
Si $A_k\subset\Esk$ on note
$\classom{A_k}=\{\classom{\xk};\ (\xk)\in\Esetp \mbox{ et } \forall k, \xk\in
A_k\}$.

\begin{prop}
L'espace m{\'e}trique $\Esom$ est complet. \cite[Lemma 2.4.2]{KlLe}\qed
\end{prop}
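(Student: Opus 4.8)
The plan is to prove completeness by the standard diagonal argument over the indexing sequences, using both the ultrafilter property of $\om$ and the fact that $\om$ refines the Fréchet filter (so every cofinite subset of $\NN$ lies in $\om$). Since in any metric space a Cauchy sequence converges as soon as one of its subsequences does, I would first pass to a subsequence and thereby reduce to a Cauchy sequence $(\classom{x^n})_{n\in\NN}$ in $\Esom$ satisfying the rapid estimate $\dom(\classom{x^n},\classom{x^{n+1}})\leq 2^{-n}$ for every $n$. For each $n$ I then fix an $\om$-bounded representative $(x^n_k)_{k\in\NN}\in\Esetp$ of the point $\classom{x^n}$.

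The core step is to manufacture a single sequence $(y_k)$ whose class is the limit. First I would note that for each fixed $n$ the set $S_n=\{k\in\NN : \dk(x^n_k,x^{n+1}_k)<2^{-n+1}\}$ lies in $\om$, since $\limom\dk(x^n_k,x^{n+1}_k)=\dom(\classom{x^n},\classom{x^{n+1}})\leq 2^{-n}<2^{-n+1}$. Setting $U_n=\{k:k\geq n\}\cap\bigcap_{j\leq n}S_j$, each $U_n$ lies in $\om$ (a finite intersection of $\om$-large sets), the sequence $(U_n)$ is nested decreasingly, and crucially $\bigcap_n U_n=\emptyset$ because $k\in U_n$ forces $k\geq n$. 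This emptiness is what allows me to define a genuine diagonal index $n(k)=\max\{n:k\in U_n\}$, finite for every $k\in U_1$, and to set $y_k=x^{n(k)}_k$ (with $y_k=x^1_k$ on the $\om$-null complement of $U_1$).

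Next I would check the two needed properties using a single triangle-inequality chain. Since $k\in U_{n(k)}$ by construction, one has $k\in S_j$ for all $j\leq n(k)$, whence $\dk(x^m_k,x^{n(k)}_k)\leq\sum_{j\geq m}2^{-j+1}=2^{-m+2}$ for any $m\leq n(k)$. Taking $m=1$ gives $\dk(\ok,y_k)\leq\dk(\ok,x^1_k)+2$, so $(y_k)$ is $\om$-bounded and defines a point $\classom{y}\in\Esom$. For convergence, I observe that $U_n\subset\{k:n(k)\geq n\}$ is $\om$-large, so on an $\om$-large set the bound with $m=n$ gives $\dk(x^n_k,y_k)\leq 2^{-n+2}$; passing to the $\om$-limit yields $\dom(\classom{x^n},\classom{y})\leq 2^{-n+2}\tend 0$. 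Hence $\classom{x^n}\tend\classom{y}$, the reduced Cauchy sequence converges, and $\Esom$ is complete.

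I expect the only delicate point to be making the diagonal index $n(k)$ serve two opposite purposes at once: it must be finite for every $k$ (guaranteed precisely by $\bigcap_n U_n=\emptyset$, which in turn uses $\{k\geq n\}\in\om$) yet satisfy $\limom n(k)=\pinfty$ (the inclusion $U_n\subset\{k:n(k)\geq n\}$). Everything else — the reduction to a rapidly Cauchy subsequence, the $\om$-boundedness of $(y_k)$, and the final estimate — is then routine bookkeeping with the triangle inequality and the finite additivity of $\om$.
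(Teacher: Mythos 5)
Your proof is correct: the paper itself gives no argument for this proposition, simply citing \cite[Lemma 2.4.2]{KlLe}, and your diagonal construction (reduction to a rapidly Cauchy subsequence, the nested $\om$-large sets $U_n$ with empty intersection, and the index $n(k)$ with $y_k=x^{n(k)}_k$) is precisely the standard proof of that lemma. So this is essentially the same approach as the proof the paper relies on, with the bookkeeping (finiteness of $n(k)$, $\om$-boundedness of $(y_k)$, and the estimate $\dom(\classom{x^n},\classom{y})\leq 2^{-n+2}$) carried out correctly.
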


\paragraph{C{\^o}nes asymptotiques d'un espace m{\'e}trique.}
On se fixe d{\'e}sormais un espace m{\'e}trique $\Es$, une suite 
$\ok\in\Es$, et une suite $\lak\in\RR_+^*$, avec $\lak\tend\pinfty$. 
On se donne {\'e}galement un groupe $G$ agissant
par isom{\'e}tries sur $\Es$.
Le {\em c{\^o}ne asymptotique} de $\Es$, suivant $\om$, par
rapport {\`a} la {\em suite de  points d'observation} $(\ok)_{k\in \NN}$ et
{\`a} la {\em suite de scalaires} $(\lak)_k$ est par d{\'e}finition l'ultralimite
$(\Esom,\dom,\oom)$ de la suite d'espaces m{\'e}triques point{\'e}s $(\Esk,
\dk=\lslak d, \ok)_{k\in\NN}$.
Remarquons que si $\Es$ est euclidien, alors $\Esom$ est isom{\'e}trique {\`a}
$\Es$ (car il existe des applications de $\Es$ dans lui-m{\^e}me envoyant un
point donn{\'e} sur $\ok$ et $d$ sur  $\lslak d$). 
Soit $\Esp$ un autre espace m{\'e}trique, et une suite
$\opk\in\Esp$. Notons $\Espom$ le c{\^o}ne asymptotique de $(\Esp,(\opk),(\lak)$.
Une suite d'applications
isom{\'e}triques  $\fk:\Esp\fleche \Es$  est dite \ombornee{}
si $\fk(\opk)$ est \ombornee{} dans $\Es$. 
Alors on peut d{\'e}finir son {\em \ulimite} $\fom:\Espom\fleche\Esom$,
not{\'e}e aussi $\ulim{\fk}$, par $\fom(\classom{\xk})=\classom{\fk(\xk)}$. 
Elle est isom{\'e}trique.
En particulier, si $\Esp=\RR$, une suite $\rk:\RR\fleche\Es$ de
g{\'e}od{\'e}siques de $\Es$ est \ombornee{} si la suite $\lslak
d(\ok,\rk(0))$ est \omborne. Son \ulimite{} s'identifie {\`a} 
$\rom:t\mapsto\rk(\lak t)]$ et c'est une g{\'e}od{\'e}sique de $\Esom$.
On note $\Getp$ l'ensemble des suites $(\gk)_{k\in \NN}$ d'{\'e}l{\'e}ments de
$G$ telles que $\dk(\ok,\gk\ok)$ est \omborne. Pour $(\gk)\in\Getp$, on
note $[\gk]:[\xk]\mapsto[\gk \xk]$ l'isom{\'e}trie {\em associ{\'e}e} de
$\Esom$. On note $\Gom$ le sous-groupe de $\Isom(\Esom)$ 
form{\'e} par ces isom{\'e}tries.

\paragraph{C{\^o}nes asymptotiques d'espaces $\CAT(0)$.}
On suppose d{\'e}sormais que $\Es$ est  $\CAT(0)$.

\begin{prop} 
\label{prop- passage des angles au conas}
\begin{enumerate}

\item 
\label{item- Esom CAT(0)} 
Le c{\^o}ne asymptotique $\Esom$ est $\CAT(0)$. 

\item 
\label{i- geods de Esom}
Les g{\'e}od{\'e}siques de $\Esom$ sont les \omlimites{} des g{\'e}o\-d{\'e}\-si\-ques de $\Es$.

\item
\label{item- continuite asymp angle comp}
\label{item- l'angle augmente}
Pour tous $\xom=[\xk]$, $\yom=[\yk]$ et $\zom=[\zk]$ dans $\Esom$, on a
$$\angletilde_\xom(\yom,\zom) = \limom \angletilde_{\xk}(\yk,\zk)
\ \ \mbox{ et }\ \  \angle_\xom(\yom,\zom) \geq \limom \angle_{\xk}(\yk,\zk).$$

\item 
\label{item- om-limite projections orthogonales}
Soit $\rk$ une
suite \ombornee{} de g{\'e}od{\'e}siques et $\rom=[\rk]$.  
La projection orthogonale sur
$\rom$ est la \omlimite{} de la suite des projections orthogonales
 sur $\rk$. 

\end{enumerate}
\end{prop}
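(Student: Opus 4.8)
These four statements are standard features of asymptotic cones of $\CAT(0)$ spaces (compare \cite[2.4]{KlLe}); the plan is to prove them in the listed order, using repeatedly that $\Esom$ is complete. First I would check that $\Esom$ is geodesic: given $\xom=[\xk]$ and $\yom=[\yk]$, let $\gamma_k$ parametrize $[\xk,\yk]$ by arclength and set $\sigma(u)=[\gamma_k(\lak u)]$; then $\dom(\sigma(u),\sigma(u'))=\limom\lslak\,d(\gamma_k(\lak u),\gamma_k(\lak u'))=|u-u'|$, so $\sigma$ is a geodesic from $\xom$ to $\yom$. For (\ref{item- Esom CAT(0)}) I would then pass the $\CAT(0)$ comparison inequality to the ultralimit: for a point $p=[\pk]$ on a side of a geodesic triangle of $\Esom$, with $\pk$ on the corresponding side of the triangle $\xk\yk\zk$ of $\Es$, the Euclidean comparison triangle of $\xom\yom\zom$ has side lengths equal to the $\limom$ of the $\lslak$-rescaled side lengths of $\xk\yk\zk$, so the distance from the comparison vertex to the comparison point of $p$ is the $\limom$ of the corresponding $\lslak$-rescaled comparison distances in $\Es$ (the comparison triangle depending continuously on its side lengths). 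The $\CAT(0)$ inequality in $\Es$ bounds $d(\xk,\pk)$ by that comparison distance; multiplied by $\lslak$ and pushed through $\limom$ it yields the $\CAT(0)$ inequality in $\Esom$. Hence $\Esom$ is $\CAT(0)$, and in particular uniquely geodesic.

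The converse half of (\ref{i- geods de Esom}) is then immediate: a geodesic segment of $\Esom$ from $\xom$ to $\yom$ coincides, by uniqueness of geodesics, with the $\om$-limit of the $[\xk,\yk]$ built in the first step; maximal geodesics are recovered by exhausting them with segments. For the equality in (\ref{item- continuite asymp angle comp}) I would only use that the comparison angle is invariant under a common rescaling of the three pairwise distances and is a continuous function of them away from degeneracies (Euclidean law of cosines): thus $\angletilde_{\xk}(\yk,\zk)$ is the same computed with $d$ or with $\lslak d$, and since $\dom(\xom,\yom)=\limom\lslak\,d(\xk,\yk)$ and similarly for the other two distances, continuity gives $\angletilde_\xom(\yom,\zom)=\limom\angletilde_{\xk}(\yk,\zk)$.

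The genuine-angle inequality in (\ref{item- l'angle augmente}) is the delicate point, and I expect it to be the main obstacle, since the direction of the inequality comes entirely from the $\CAT(0)$ monotonicity of comparison angles. Recall that in a $\CAT(0)$ space the comparison angles $\angletilde_x(\gamma(s),\gamma'(s'))$ of two geodesics $\gamma,\gamma'$ issued from $x$ are non-decreasing in $s,s'$, so the Alexandrov angle $\angle_x(\gamma,\gamma')$ is their infimum over $s,s'>0$; in particular every comparison angle is $\geq\angle_x(\gamma,\gamma')$. Write $p(s)=[\gamma_k(\lak s)]$ and $q(t)=[\delta_k(\lak t)]$ for the points of $[\xom,\yom],[\xom,\zom]$ at distance $s,t$ from $\xom$, where $\gamma_k,\delta_k$ parametrize $[\xk,\yk],[\xk,\zk]$ by arclength. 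The equality just proved, applied to the triple $\xom,p(s),q(t)$, gives $\angletilde_\xom(p(s),q(t))=\limom\angletilde_{\xk}(\gamma_k(\lak s),\delta_k(\lak t))$. For fixed $0<s<\dom(\xom,\yom)$ and $0<t<\dom(\xom,\zom)$ one has $\lak s\leq d(\xk,\yk)$ and $\lak t\leq d(\xk,\zk)$ for $\om$-almost every $k$, so $\gamma_k(\lak s),\delta_k(\lak t)$ lie on the segments and $\angletilde_{\xk}(\gamma_k(\lak s),\delta_k(\lak t))\geq\angle_{\xk}(\yk,\zk)$. Taking $\limom$ gives $\angletilde_\xom(p(s),q(t))\geq\limom\angle_{\xk}(\yk,\zk)$; since $\Esom$ is $\CAT(0)$, $\angle_\xom(\yom,\zom)$ is the infimum of the left-hand sides over $s,t>0$, whence $\angle_\xom(\yom,\zom)\geq\limom\angle_{\xk}(\yk,\zk)$. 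Equality generally fails because the cone records comparison angles only at macroscopic scale, whereas $\angle_{\xk}(\yk,\zk)$ is infimal over all scales near $\xk$, including those invisible in the cone.

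Finally, for (\ref{item- om-limite projections orthogonales}), let $\pi_k$ be the nearest-point projection onto the complete convex line $\rk$ in $\Es$, which exists and is $1$-Lipschitz \cite[II.2.4]{BrHa}. For $\xom=[\xk]$, since $\pi_k(\xk)\in\rk$ and, by the $1$-Lipschitz bound, $\lslak\,d(\ok,\pi_k(\xk))$ is $\om$-bounded, the point $[\pi_k(\xk)]$ is well defined and lies on $\rom=[\rk]$. Any $w\in\rom$ is of the form $[\wk]$ with $\wk\in\rk$, and the minimality $d(\xk,\pi_k(\xk))\leq d(\xk,\wk)$, divided by $\lak$ and passed through $\limom$, gives $\dom(\xom,[\pi_k(\xk)])\leq\dom(\xom,w)$. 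Thus $[\pi_k(\xk)]$ realizes the distance from $\xom$ to $\rom$; by uniqueness of the nearest-point projection in the complete $\CAT(0)$ space $\Esom$ (part (\ref{item- Esom CAT(0)})), it equals the orthogonal projection of $\xom$ onto $\rom$, which is the claim.
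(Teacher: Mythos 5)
Your proposal is correct, and on point 3 --- the only part for which the paper writes out a genuine argument --- it coincides with the paper's proof: the equality of comparison angles follows from scale-invariance and continuity of the Euclidean comparison angle in the three side lengths, and the inequality for Alexandrov angles is obtained by taking points on the two segments issued from $\xk$ (whose comparison angle dominates $\angle_{\xk}(\yk,\zk)$, the angle at $\xk$ being unchanged when the endpoints slide along the segments), passing to $\limom$, and finally letting the points tend to the vertex in $\Esom$. The differences are at the two ends. For points 1 and 2 the paper simply cites \cite[Lemma 2.4.4]{KlLe}, whereas you reprove them; your sketch is the standard one, with two small refinements needed to make it airtight: as written you verify the comparison inequality only for triangles whose sides are ultralimits of segments, while unique geodesicity --- which you then use for point 2 and, implicitly, for full $\CAT(0)$ --- is itself derived from $\CAT(0)$; the circle is closed either by the purely metric four-point characterization of $\CAT(0)$, which passes trivially to ultralimits, or by applying your inequality to the degenerate triangle $\xom$, $\yom$, $\tau(t)$, where $\tau$ is an arbitrary geodesic from $\xom$ to $\yom$, which forces $\tau$ to coincide with the limit segment. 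Similarly, producing a complete geodesic of $\Esom$ as the $\om$-limit of a \emph{single} sequence of geodesics of $\Es$ requires a diagonal choice governed by $\om$, not just an exhaustion by segments. On point 4 you take a genuinely different route: the paper deduces it in one line from point 3, using that the projection onto $\rk$ is characterized by the angle condition $\angle_{\pi_k(\xk)}(\xk,z)\geq\pi/2$ for $z$ on $\rk$, a condition that survives in the cone precisely because angles can only increase; you instead use the metric characterization --- $[\pi_k(\xk)]$ lies on $\rom$ and minimizes the distance from $\xom$ to $\rom$, hence equals the projection by uniqueness of nearest points on convex subsets of $\CAT(0)$ spaces. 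Both are valid; yours is more elementary (it never mentions angles) and gives well-definedness of the limit map for free.
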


\begin{preuve} 
Le point \ref{item- Esom CAT(0)} et \ref{i- geods de Esom} 
d{\'e}coulent du lemme 2.4.4 de \cite{KlLe}. 

La premi{\`e}re assertion du point 
\ref{item- continuite asymp angle comp} 
est facile {\`a} voir en regardant les triangles de
comparaison dans le plan (car les angles sont inchang{\'e}s par homoth{\'e}ties). 
La deuxi{\`e}me en d{\'e}coule. En effet 
soient $\ypom=\classom{\ypk}$ et
$\zpom=\classom{\zpk}$ avec $\ypk\in]\xk, \yk]$ et
$\zpk\in]\xk,\zk]$.
On a 
$\angletilde_{\xpk}(\ypk,\zpk)\geq \angle_{\xpk}(\ypk,\zpk)$ et
$\angle_{\xk}(\yk,\zk)=\angle_{\xk}(\yk,\zk)$ pour tout $k$. 
Donc en passant {\`a} la limite suivant $\om$ on a
$\angletilde_\xpom(\ypom,\zpom)\geq \limom \angle_{\xk}(\yk,\zk)$.
On conclut en passant {\`a} la limite pour $\ypom \rightarrow \xom$ et
$\zpom \rightarrow \xom$.  

Le point \ref{item- om-limite projections orthogonales}
d{\'e}coule du fait que les angles augmentent 
(point \ref{item- continuite asymp angle comp}).
\end{preuve}

\paragraph{Bord {\`a} l'infini.} 
Il est clair que les \omlimites{} de rayons asymptotes restent asymptotes.
\`A toute suite $(\xik)_k$ dans $\bordinf\Es$ on peut donc associer un
point $\xiom=[\xik]$ dans $\bordinf\Esom$, d{\'e}fini de la mani{\`e}re suivante. 
Soit $\rk$ une suite \ombornee{} de rayons de $\Es$ tels que
$\xik=\rk(\infty)$ et $\rom=[\rk]$. Alors $\xiom=\rom(\infty)$.
Tous les points de $\bordinf\Esom$ sont de cette forme.

\paragraph{Longueur de translation.}
Le lemme suivant 
se v{\'e}rifie ais{\'e}ment.

\begin{lemm} \label{lemm- Min(gk) reste a distance ombornee}
 Soit $\gk$ une suite $\om$-born{\'e}e dans $\Isom(\Es)$ et $\gom=[\gk]$. 
On a 
 \begin{enumerate}
 \item $\limom \lslak d_{\gk} =d_\gom$
 \item $\limom \lslak\lgk\leq \lgom$.
 \item $[\Min(\gk)]\subset\Min(\gom)$.
\item  Si $\lslak d(\ok, \Min(\gk))$ est \omborne{}, 
alors $\Min(\gom)\neq\emptyset$ et $\limom \lslak\lgk = \lgom$
\qed
 \end{enumerate}
\end{lemm}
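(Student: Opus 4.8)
The plan is to reduce everything to the single identity in the first point, after which the remaining three assertions follow formally. Recall first that because $(\gk)$ is $\om$-bounded, the sequence $(\gk)$ defines the isometry $\gom=[\gk]$ of $\Esom$ acting by $\gom[\xk]=[\gk\xk]$; one checks that $(\gk\xk)$ is again $\om$-bounded, via $\dk(\ok,\gk\xk)\leq\dk(\ok,\gk\ok)+\dk(\ok,\xk)$, so that $\gom\xom$ is well defined for every $\xom=[\xk]\in\Esom$.

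For the first point I would simply unwind the definition of the metric $\dom$ on the ultralimit: for $\xom=[\xk]$,
\[
d_\gom(\xom)=\dom(\xom,\gom\xom)=\limom\dk(\xk,\gk\xk)=\limom\lslak\, d_{\gk}(\xk),
\]
which is exactly the asserted pointwise equality $\limom\lslak d_{\gk}=d_\gom$ (the right-hand ultralimit being automatically independent of the chosen representative, since it equals $d_\gom(\xom)$). This is the only genuine computation; the rest is bookkeeping.

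The second point follows by taking $\xom=[\xk]\in\Esom$ arbitrary and using $\lgk\leq d_{\gk}(\xk)$: dividing by $\lak$ and passing to the $\om$-limit gives, via the first point, $\limom\lslak\lgk\leq d_\gom(\xom)$ for every $\xom$, and taking the infimum over $\xom\in\Esom$ yields $\limom\lslak\lgk\leq\lgom$. For the third point, take $\xom=[\xk]\in[\Min(\gk)]$, so that $d_{\gk}(\xk)=\lgk$ for $\om$-almost every $k$; the first point then gives $d_\gom(\xom)=\limom\lslak\lgk\leq\lgom$ by the second point, while always $d_\gom\geq\lgom$, forcing $d_\gom(\xom)=\lgom$, i.e.\ $\xom\in\Min(\gom)$.

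For the fourth point, the hypothesis that $\lslak d(\ok,\Min(\gk))$ is $\om$-bounded forces $\Min(\gk)\neq\emptyset$ for $\om$-almost every $k$ (otherwise $d(\ok,\Min(\gk))=\pinfty$); I would then select $\xk\in\Min(\gk)$ with $d(\ok,\xk)\leq d(\ok,\Min(\gk))+\lak$, which makes $(\xk)$ an $\om$-bounded sequence, hence $\xom=[\xk]$ a point of $[\Min(\gk)]\subset\Min(\gom)$ by the third point. Thus $\Min(\gom)\neq\emptyset$, and reading off $\lgom=d_\gom(\xom)=\limom\lslak\lgk$ upgrades the inequality of the second point to the claimed equality. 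The only step requiring care — and the mild obstacle here — is this last choice of representatives: rather than projecting $\ok$ orthogonally onto the closed convex set $\Min(\gk)$ (which would implicitly require completeness of $\Es$, not assumed in this subsection), I take $\xk$ within an additive slack of $\lak$, so that after rescaling by $\lslak$ the sequence stays $\om$-bounded.
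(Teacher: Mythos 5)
Your proof is correct, and it is precisely the routine unwinding of the ultralimit definitions that the paper itself omits: the lemma is stated there with only the remark that it ``se v\'erifie ais\'ement'', so your argument (point 1 by direct computation of $\dom$, points 2--4 as formal consequences) is exactly the intended one. Your care in choosing $\xk\in\Min(\gk)$ with an additive slack $\lak$ instead of an orthogonal projection is a sensible touch, since completeness of $\Es$ is indeed not assumed at this stage.
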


\paragraph{C{\^o}nes asymptotiques d'espaces sym{\'e}triques et d'immeubles.}
On suppose dor{\'e}navant que  $\Es$ est ou bien un espace
sym{\'e}trique, ou bien un immeuble affine, 
et que $G$ est un sous-groupe ferm{\'e} du groupe des automorphismes de $\Es$.
On se fixe un plat maximal $\Aa$ de $\Es$, de groupe de Weyl $\Wvect$ et
une chambre de Weyl $\Cb$ de $\Aa$. 
Une suite $\fk:\Aa\fleche\Es$ de plats maximaux marqu{\'e}s 
est \ombornee{}
si la suite $\fk(0)$ l'est, et son \omlimite{} est alors l'application
isom{\'e}trique $[\fk]:\Aa\fleche\Esom,\ \af\mapsto [\fk(\lak \af)]$.
On note $\Aom$ l'ensemble des \omlimites{} des suites \ombornees{} de
plats maximaux marqu{\'e}s de $\Es$. 

\begin{theo}[Kleiner-Leeb \cite{KlLe}]
\label{theo- cone as = immeuble} 
Le c{\^o}ne asymptotique $\Esom$, mu\-ni du sys\-t{\`e}\-me d'appartements
marqu{\'e}s $\Aom$, est un immeuble affine complet, de type
$(\Aa,\Wvect)$.
Le groupe $\Gom$ agit sur $\Esom$, par automorphismes.
Si  $G$ agit cocompactement sur $\Es$, alors $\Gom$
agit transitivement sur $\Esom$. 
Si $G$ agit transitivement sur les plats maximaux marqu{\'e}s de $\Es$,
alors $\Gom$ agit transitivement sur $\Aom$.
\label{Gom transitif sur les appartements marques de Esom}%
\qed 
\end{theo}

On termine par deux propri{\'e}t{\'e}s du type des segments dont on aura
besoin par la suite, et qui se v{\'e}rifient ais{\'e}ment.

\begin{prop}
\label{prop- type asympt continu}
Pour tous  $\xom=[\xk]$ et $\yom=[\yk]$ de $\Esom$, on a dans $\Cb$
\[\Cd(\xom,\yom)=\limom\lslak\Cd(\xk, \yk)\; . \]\qed  \end{prop}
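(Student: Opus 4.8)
Le plan est de se ramener, gr{\^a}ce {\`a} la structure d'immeuble affine de $\Esom$, {\`a} un choix de repr{\'e}sentants pour lequel l'{\'e}galit{\'e} devient un simple calcul d'homog{\'e}n{\'e}it{\'e} du type. Je commencerais par observer que le membre de droite $\limom\lslak\Cd(\xk,\yk)$ est bien d{\'e}fini et ind{\'e}pendant du choix des suites repr{\'e}sentant $\xom$ et $\yom$. La \omlimite{} existe car, $\norme{\Cd(\xk,\yk)}=d(\xk,\yk)$ et $\lslak d(\xk,\yk)$ {\'e}tant \ombornee{}, la suite $\lslak\Cd(\xk,\yk)$ reste dans une partie born{\'e}e de la chambre ferm{\'e}e $\Cb$. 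Si $(\xpk)$ et $(\ypk)$ sont d'autres repr{\'e}sentants, alors $\limom\lslak d(\xk,\xpk)=0$ et $\limom\lslak d(\yk,\ypk)=0$, et la proposition \ref{prop- Cd 1-Lip} donne
$$\norme{\lslak\Cd(\xk,\yk)-\lslak\Cd(\xpk,\ypk)}\leq \lslak d(\xk,\xpk)+\lslak d(\yk,\ypk),$$
dont la \omlimite{} est nulle. Il me suffit donc de calculer le membre de droite pour un choix bien adapt{\'e} de repr{\'e}sentants.

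J'utiliserais ensuite le th{\'e}or{\`e}me \ref{theo- cone as = immeuble}: $\Esom$ {\'e}tant un immeuble affine de type $(\Aa,\Wvect)$, les deux points $\xom$ et $\yom$ appartiennent {\`a} un m{\^e}me plat maximal marqu{\'e} $f_\om\in\Aom$. Par d{\'e}finition de $\Aom$, un tel $f_\om$ s'{\'e}crit $f_\om=[\fk]$ pour une suite \ombornee{} $\fk:\Aa\fleche\Es$ de plats maximaux marqu{\'e}s, avec $f_\om(\af)=\classom{\fk(\lak\af)}$ pour tout $\af\in\Aa$. J'{\'e}cris $\xom=f_\om(A)$ et $\yom=f_\om(B)$ avec $A,B\in\Aa$; par d{\'e}finition du type dans l'immeuble $\Esom$ (section \ref{ss- type}) on a alors $\Cd(\xom,\yom)=\type(\vecteur{AB})$. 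Je choisis enfin les repr{\'e}sentants $\xk=\fk(\lak A)$ et $\yk=\fk(\lak B)$, qui conviennent puisque $\xom=\classom{\fk(\lak A)}$ et $\yom=\classom{\fk(\lak B)}$.

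Pour ce choix, le calcul est direct: $\fk$ {\'e}tant un plat maximal marqu{\'e} de $\Es$ avec $\fk(\lak A)=\xk$ et $\fk(\lak B)=\yk$, la d{\'e}finition du type donne $\Cd(\xk,\yk)=\type(\lak\vecteur{AB})=\lak\,\type(\vecteur{AB})$, la seconde {\'e}galit{\'e} r{\'e}sultant de ce que $\type$ est positivement homog{\`e}ne ($\Cb$ est un c{\^o}ne et $\Wvect$ agit lin{\'e}airement sur $\Aa$). Ainsi $\lslak\Cd(\xk,\yk)=\type(\vecteur{AB})$ est ind{\'e}pendant de $k$, d'o{\`u} $\limom\lslak\Cd(\xk,\yk)=\type(\vecteur{AB})=\Cd(\xom,\yom)$, ce qui conclut. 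Les seuls points {\`a} justifier soigneusement sont l'ind{\'e}pendance du membre de droite vis-{\`a}-vis des repr{\'e}sentants (pour autoriser ce bon choix) et l'existence d'un plat commun aux deux points, qui d{\'e}coulent respectivement de la proposition \ref{prop- Cd 1-Lip} et du th{\'e}or{\`e}me \ref{theo- cone as = immeuble}; le reste n'est qu'un calcul d'homog{\'e}n{\'e}it{\'e} et ne pr{\'e}sente pas de difficult{\'e}.
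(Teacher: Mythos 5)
The paper offers no proof to compare against: this proposition is one of the two properties introduced at the end of Section \ref{ss- prel conas} as ones ``qui se v{\'e}rifient ais{\'e}ment'', stated with the proof omitted. Your argument supplies that verification, and it is correct. All three steps hold up: the $\om$-limit on the right exists and depends only on the classes $\xom,\yom$ (the sequence $\lslak\Cd(\xk,\yk)$ stays in a compact of $\Cb$ because $\norme{\Cd(\xk,\yk)}=d(\xk,\yk)$, and Proposition \ref{prop- Cd 1-Lip} gives independence of the representatives); Theorem \ref{theo- cone as = immeuble} makes $(\Esom,\Aom)$ an affine building of type $(\Aa,\Wvect)$, so its axioms provide a marked apartment $f_\om=[\fk]\in\Aom$ with $\xom=f_\om(A)$, $\yom=f_\om(B)$ and $\Cd(\xom,\yom)=\type(\vecteur{AB})$; and the positive homogeneity of $\type$ makes $\lslak\Cd(\fk(\lak A),\fk(\lak B))=\type(\vecteur{AB})$ constant in $k$. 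The one place where you use more than the bare statement of Theorem \ref{theo- cone as = immeuble} is the building axiom that any two points of $\Esom$ lie in a common apartment of the system $\Aom$; a slightly more economical variant runs in the opposite direction and avoids it: for each $k$ choose a marked flat $\fk$ of $\Es$ with $\fk(0)=\xk$ and $\fk(\Cd(\xk,\yk))=\yk$ (possible by the definition of $\Cd$ and the invariance of marked flats under the affine Weyl group); this sequence is \ombornee{}, hence $[\fk]\in\Aom$, and setting $v_\om=\limom\lslak\Cd(\xk,\yk)\in\Cb$ one gets $\lslak d(\fk(\lak v_\om),\yk)=\norme{v_\om-\lslak\Cd(\xk,\yk)}\tendom 0$, so the chart $[\fk]$ contains $\xom$ and $\yom$ at positions $0$ and $v_\om\in\Cb$, whence $\Cd(\xom,\yom)=v_\om$ directly. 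Either way the substance is the same: the type is computed inside a limit apartment, and the rescaling by $\lslak$ is exactly absorbed by the homogeneity of $\type$.
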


\begin{prop}  
\label{prop- suite de geodesiques de type constant}
\label{prop- Gom geod type constant}
 Soit $\rom$ une g{\'e}od{\'e}sique de $\Esom$. 
Il existe une suite $(\rk)$ \ombornee{} de g{\'e}od{\'e}siques de $\Es$
de type constant $\btype (\rom)$
telle que $[\rk]=\rom$.
Si  $G$ agit cocompactement sur $\Es$ et transitivement sur
les plats maximaux marqu{\'e}s de $\Es$, alors
$\Gom$ agit transitivement sur les g{\'e}od{\'e}siques de $\Esom$ de
type donn{\'e}.
\qed\end{prop}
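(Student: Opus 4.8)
For the first assertion, the plan is to exploit that $\Esom$ is an affine building (Théorème \ref{theo- cone as = immeuble}) whose marked apartments are exactly the $\om$-limites $[\fk]$ of suites \ombornees{} de plats maximaux marqués $\fk:\Aa\fleche\Es$. First I would use the building structure to place $\rom$ inside a marked apartment $A=[\fk]$. Through the chart $\Aa\fleche A$, $v\mapsto[\fk(\lak v)]$, which is an isometry, the geodesic $\rom$ becomes an affine line $t\mapsto\alpha+tu$ of $\Aa$ with $u$ a unit vector, so that $\rom(t)=[\fk(\lak\alpha+\lak t u)]$. I then set $\rk(s)=\fk(\lak\alpha+su)$: this is a complete geodesic of $\Es$, it is \ombornee{} since $\lslak d(\ok,\rk(0))\leq\lslak d(\ok,\fk(0))+\norme{\alpha}$, and one checks $[\rk]=\rom$ at once from $[\rk](t)=[\rk(\lak t)]=[\fk(\lak\alpha+\lak t u)]=\rom(t)$. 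Its type is constant: inside the flat $\fk$ one has $\Cd(\rk(0),\rk(s))=\type(su)$ for $s>0$, so $\btype(\rk)=\bord\type(u)$; and by Proposition \ref{prop- type asympt continu}, $\Cd(\rom(0),\rom(1))=\limom\lslak\type(\lak u)=\type(u)$, whence $\btype(\rom)=\bord\type(u)=\btype(\rk)$.

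For the second assertion, assume $G$ acts cocompactly and transitively on the marked maximal flats; then by Théorème \ref{theo- cone as = immeuble} the group $\Gom$ acts transitively on the marked apartments $\Aom$. Given two geodesics $\rom,\rom'$ of the same type $\untype$, I would realize each inside a marked apartment as above, then use transitivity on $\Aom$ to bring both into a single apartment $A=[\fk]$ (automorphisms preserve the type, so this step is harmless). It then suffices to show that the stabilizer of $A$ in $\Gom$ acts on $A\cong\Aa$ through the full affine group $v\mapsto wv+a$ ($w\in\Wvect$, $a\in\Aa$), which is transitive on the lines of a given type. Translations come from the element $\gk\in G$ sending $\fk$ to $\fk(\,\cdot\,+\lak a)$ (transitivity on marked flats): the sequence is \ombornee{}, and $[\gk]$ sends $[\fk(\lak v)]$ to $[\fk(\lak(v+a))]$, i.e. acts on the chart as $v\mapsto v+a$. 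The vectorial Weyl group comes from $\gk$ sending $\fk$ to $\fk\circ\tilde w$, where $\tilde w\in\Waff$ has vectorial part $w$; then $[\gk]$ sends $[\fk(\lak v)]$ to $[\fk(\lak wv+\tilde w(0))]=[\fk(\lak wv)]$, the fixed vector $\tilde w(0)$ washing out since $\lslak\norme{\tilde w(0)}\tendom 0$, so $[\gk]$ acts as $v\mapsto wv$. Choosing $w$ with $wu=u'$ (possible because $\type u=\type u'$) and then a translation aligns $\rom$ onto $\rom'$.

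The two points requiring care, both supplied by the building structure of $\Esom$ (\cite{KlLe}, \cite{ParImm}), are that every complete geodesic of $\Esom$ lies in a marked apartment of $\Aom$, and that the system of marked flats of $\Es$ is stable under precomposition by $\Waff$ (so that $\fk\circ\tilde w$ is again a marked flat). Granting these, all the sequences $(\gk)$ produced are manifestly \ombornees{} and everything reduces to elementary computations in the model flat $\Aa$, which is what makes the statement ``easily verified'' as announced.
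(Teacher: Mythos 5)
The paper contains no proof to compare against: this proposition is one of the two properties it announces as ``qui se v{\'e}rifient ais{\'e}ment'' and states with an immediate qed. Your proposal is a correct filling-in of that claim, along what is surely the intended route: place $\rom$ in a limit apartment $[\fk]\in\Aom$ --- legitimate because the Kleiner--Leeb theorem behind Th{\'e}or{\`e}me \ref{theo- cone as = immeuble} establishes the ``plenty of apartments'' axiom for the atlas $\Aom$, so every complete geodesic of $\Esom$ does lie in some $[\fk]$ --- then read $\rom$ as a line $t\mapsto\alpha+tu$ in that chart and set $\rk(s)=\fk(\lak\alpha+su)$. The $\om$-boundedness and type computations (via Proposition \ref{prop- type asympt continu}) are correct, as is the overall scheme for transitivity: transitivity of $\Gom$ on $\Aom$, then the affine group $\Wvect\ltimes\Aa$ induced on a single apartment by its stabilizer in $\Gom$.

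One step deserves a flag. When $\Es$ is an affine building, the map $\fk\circ\tau_{\lak a}$ (precomposition by the translation of vector $\lak a$) need not be a marked flat of $\Es$: the system $\A$ is stable only under precomposition by $\Waff$, whose translation subgroup may be discrete, so ``transitivity on marked flats'' cannot be invoked verbatim to produce your translating elements $\gk$. The repair is the same trick you already use to dispose of $\tilde w(0)$: replace $\lak a$ by a translation $t_k$ belonging to $\Waff$ with $\norme{t_k-\lak a}$ bounded (available in the relevant cases, e.g.\ Bruhat--Tits buildings, where the translation group of $\Waff$ is cocompact in $\Aa$), and the unrescaled $O(1)$ error washes out in the cone. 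In the symmetric-space case --- the only one for which the paper actually invokes this transitivity, namely in the proof of Proposition \ref{prop- passage au cone} --- marked flats are restrictions of global isometries, every translation of $\Aa$ is realized by a transvection, and your argument stands exactly as written.
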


\section{C{\^o}nes asymptotiques d'espaces
  sym{\'e}triques  : Mod{\`e}le alg{\'e}brique} 
\label{s- modele alg de Esom}

 Soit $\KK$  un corps valu{\'e}
ou bien {\'e}gal {\`a} $\RR$ ou $\CC$ ou bien ultram{\'e}trique (la valuation
n'est alors pas n{\'e}cessairement discr{\`e}te). On note $\abs{\ }$ la valeur absolue de $\KK$.
On note $\Es(\KK)$ ou $\Es$ l'espace associ{\'e} {\`a} $G=\SLn(\KK)$ (voir
section \ref{ss- immeuble d'un groupe reductif sur un corps non
  arch}).

Cette section est consacr{\'e}e {\`a} montrer que tout c{\^o}ne asymptotique de
$\Es(\KK)$ s'identifie {\`a} $\Es(\KKom)$ (th{\'e}or{\`e}me \ref{theo- modele
  alg Esom}), pour un corps valu{\'e} non archim{\'e}dien ad hoc $\KKom$ qu'on
construit pr{\'e}alablement (dont la valeur absolue prend toutes les
valeurs r{\'e}elles).

La preuve repose sur l'utilisation du mod{\`e}le des ``bonnes
normes'' de volume $1$ sur $\KK^n$ pour $\Es(\KK)$, qu'on expose au
pr{\'e}alable en section \ref{ss- normes}.

\subsection{\Pseudovas{} et \pseudonormes{}}
On commence par clarifier quelques notions de base qu'on va utiliser
plusieurs fois par la suite.
Soit $\FF$ un corps et $E$ un espace vectoriel sur $\FF$.

\begin{defi}[\Pseudova]
On appellera {\em \pseudova} sur $\FF$  une application
$\abs{\ }:\FF\fleche[0,\pinfty]$ telle que 
pour tous $a,b$ dans $\FF$
on a
\begin{enumerate}
\item $\abs{0}=0$
\item 
\label{item- abs(ab)} 
$\abs{ab} = \abs{a}\abs{b}$ (lorsque $\abs{a},\abs{b}<\pinfty$)
\item \label{item- abs(a+b)}
 $\abs{a+b} \leq \abs{a}+\abs{b}$
\end{enumerate}

On dira que $\abs{\ }$  est {\em ultram{\'e}trique} si elle
v{\'e}rifie en outre 
$\abs{a+b}\leq \max(\abs{a},\abs{b})$. 

 Si le corps $\FF$ est muni d'un ordre total $\leq$, on dira que
$\abs{\ }$ est {\em compatible} avec l'ordre si 
pour tous $a,b\in\FF$ tels que
$0\leq b\leq a$, on a  $\abs{b} \leq \abs{a}$.

Une \pseudova{} $\abs{\ }$ est une {\em valeur
absolue} si elle v{\'e}rifie $\abs{a}<\pinfty$ et
$\abs{a}=0\Rightarrow a=0$. 
On dira qu'elle est {\em triviale} si $\abs{a}=1$ pour tout 
$a\neq 0$.

\end{defi}

\begin{prop}
\label{prop- corps value ordonne}
On suppose que $(\FF,\leq)$ est un corps totalement ordonn{\'e} et que
$\abs{\ }$ est une \pseudova{} sur $\FF$ compatible avec l'ordre.

\begin{enumerate}
\item %
Si $\abs{\ }$ est ultram{\'e}trique, alors pour tous $a,b\geq 0$ dans $\FF$, on a 
\[\abs{a+b} = \max(\abs{a},\abs{b})\]

\item Si $\abs{\ }$ est triviale sur le sous-corps premier de $\FF$,
alors elle est ultram{\'e}trique. \qed
\end{enumerate}
\end{prop}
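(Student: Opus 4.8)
Le plan est de traiter les deux points s{\'e}par{\'e}ment, le second reposant sur l'argument classique du bin{\^o}me.

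Pour le premier point, on suppose $\abs{\ }$ ultram{\'e}trique et on fixe $a,b\geq 0$ dans $\FF$. L'in{\'e}galit{\'e} $\abs{a+b}\leq \max(\abs{a},\abs{b})$ n'est autre que l'hypoth{\`e}se ultram{\'e}trique. Pour l'in{\'e}galit{\'e} r{\'e}ciproque, j'observerais que $a,b\geq 0$ entra{\^\i}ne $0\leq a\leq a+b$ et $0\leq b\leq a+b$, de sorte que la compatibilit{\'e} avec l'ordre donne $\abs{a}\leq\abs{a+b}$ et $\abs{b}\leq\abs{a+b}$, d'o{\`u} $\max(\abs{a},\abs{b})\leq\abs{a+b}$. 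En combinant les deux in{\'e}galit{\'e}s on obtient l'{\'e}galit{\'e} voulue ; c'est d'ailleurs le seul endroit o{\`u} intervient la compatibilit{\'e} avec l'ordre.

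Pour le second point, je commencerais par remarquer que $\FF$, {\'e}tant totalement ordonn{\'e}, est n{\'e}cessairement de caract{\'e}ristique nulle ; son sous-corps premier est donc $\QQ$, et l'hypoth{\`e}se de trivialit{\'e} signifie que $\abs{q}=1$ pour tout $q\in\QQ$ non nul. En particulier tout entier non nul $m$ v{\'e}rifie $\abs{m}\leq 1$, ce qui s'applique aux coefficients binomiaux. Soient $a,b\in\FF$. Si l'une des quantit{\'e}s $\abs{a}$, $\abs{b}$ vaut $\pinfty$, alors $\max(\abs{a},\abs{b})=\pinfty$ et l'in{\'e}galit{\'e} ultram{\'e}trique est triviale ; je peux donc supposer $\abs{a},\abs{b}<\pinfty$. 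L'in{\'e}galit{\'e} triangulaire donne alors $\abs{a+b}\leq\abs{a}+\abs{b}<\pinfty$, ce qui autorise {\`a} utiliser la multiplicativit{\'e}. En d{\'e}veloppant $(a+b)^N$ par la formule du bin{\^o}me et en majorant chaque coefficient binomial par $1$, j'obtiendrais pour tout entier $N\geq 1$
\[
\abs{a+b}^N = \abs{(a+b)^N} \leq \sum_{k=0}^N \abs{a}^{k}\,\abs{b}^{N-k} \leq (N+1)\,\max(\abs{a},\abs{b})^N ,
\]
puis, en prenant les racines $N$-i{\`e}mes, $\abs{a+b}\leq (N+1)^{1/N}\max(\abs{a},\abs{b})$. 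Comme les valeurs de $\abs{\ }$ sont r{\'e}elles (dans $[0,\pinfty]$) et que $(N+1)^{1/N}\tend 1$, le passage {\`a} la limite $N\tend\pinfty$ fournit $\abs{a+b}\leq\max(\abs{a},\abs{b})$, c'est-{\`a}-dire l'ultram{\'e}tricit{\'e}.

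La principale subtilit{\'e} r{\'e}side dans la gestion des valeurs infinies autoris{\'e}es par la notion de pseudo-valeur absolue : il faut {\'e}carter d'embl{\'e}e le cas o{\`u} $\abs{a}$ ou $\abs{b}$ vaut $\pinfty$ (la multiplicativit{\'e} de $\abs{\ }$ n'{\'e}tant postul{\'e}e que pour des valeurs finies), apr{\`e}s quoi l'argument du bin{\^o}me ne porte plus que sur des r{\'e}els et le passage {\`a} la limite classique $(N+1)^{1/N}\tend 1$ permet de conclure. Je ne vois pas d'obstacle s{\'e}rieux au-del{\`a} de ce point de vigilance.
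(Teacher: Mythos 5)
Votre démonstration est correcte, mais, pour le second point, elle suit une route réellement différente de celle de l'article. Le premier point est traité de la même manière (l'article le déclare simplement ``clair'' ; vous explicitez les deux inégalités : $\leq$ par l'hypothèse ultramétrique, $\geq$ par la compatibilité avec l'ordre appliquée à $0\leq a\leq a+b$ et $0\leq b\leq a+b$). Pour le second point, l'article exploite l'ordre de façon essentielle : pour $0\leq b\leq a$, on a $0\leq a+b\leq 2a$, donc $\abs{a+b}\leq \abs{2}\abs{a}=\abs{a}=\max(\abs{a},\abs{b})$ par compatibilité et trivialité ; c'est une ligne, qui n'utilise que $\abs{2}=1$, mais qui ne traite que les éléments positifs, la réduction du cas général à ce cas (via la compatibilité et $\abs{-1}=1$) restant implicite. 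Vous utilisez au contraire l'argument binomial classique : une \pseudova{} bornée sur l'image de $\ZZ$ est ultramétrique. Cet argument n'utilise jamais la compatibilité avec l'ordre ; il traite donc d'un coup tous les $a,b\in\FF$, sans discussion de signes, et démontre en fait un énoncé plus fort, valable sans aucune hypothèse d'ordre. En contrepartie, il est un peu plus long et demande la précaution que vous signalez vous-même : écarter d'abord le cas où $\abs{a}$ ou $\abs{b}$ vaut $\pinfty$, la multiplicativité n'étant postulée que pour des valeurs finies --- ce que vous faites correctement. Les deux approches sont donc valides : celle de l'article achète la brièveté au prix d'un usage fort de l'ordre ; la vôtre achète la généralité et l'absence de cas particuliers au prix de quelques lignes supplémentaires.
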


\begin{preuve}
La premi{\`e}re assertion est claire.
Soient $a,b\geq 0$ dans $\FF$.  
On peut supposer que $b\leq a$. On a alors
  $0\leq a+b\leq 2a$ donc 
$\abs{a+b}\leq  \abs{2a}=\abs{2}\abs{a}=\abs{a}$.
\end{preuve}

On suppose d{\'e}sormais que $\FF$ est muni d'une \pseudova{} $\abs{\ }$.
On d{\'e}\-mon\-tre sans difficult{\'e}s les r{\'e}sultats suivants.

\begin{prop}[Corps valu{\'e} associ{\'e}]
\label{prop- corps value associe a une pseudova}
 Notons
  \(\FFp=\{a\in\FF\ |\ \abs{a}<\pinfty\}\) et
  \(\FFo=\{a\in\FF\ |\ \abs{a}=0\}\). 
Alors $\FFp$ est un  sous-anneau de $\FF$ et $\FFo$ est un id{\'e}al
  maximal de $\FFp$.
La \pseudova{} $\abs{\ }$ passe au quotient en une valeur absolue
sur le corps quotient $\FF_{/\abs{\ }}=\FFp/\FFo$.

Si le corps $\FF$ est muni d'un ordre total $\leq$ compatible avec $\abs{\ }$,
alors $\leq$ passe au quotient en un ordre total sur le corps $\FF_{/\abs{\ }}$
compatible avec $\abs{\ }$.
\qed
\end{prop}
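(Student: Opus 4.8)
Le plan est de v\'erifier s\'epar\'ement les trois assertions, les trois axiomes d'une \pseudova{} faisant presque tout le travail ; la seule v\'eritable subtilit\'e est la maximalit\'e de $\FFo$, qui revient \`a inverser dans $\FFp$ les \'el\'ements de valeur absolue finie non nulle. D'abord je noterais que $\abs{1}=1$ : en effet $\abs{1}=\abs{1\cdot 1}=\abs{1}^2$ par multiplicativit\'e (ce qui est l\'egitime une fois \'ecart\'es les cas d\'eg\'en\'er\'es $\abs{1}\in\{0,\pinfty\}$), d'o\`u $1\in\FFp\setminus\FFo$, et de m\^eme $\abs{-1}=1$ puisque $\abs{-1}^2=\abs{1}=1$. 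Le fait que $\FFp$ soit un sous-anneau est alors imm\'ediat : il contient $0$ et $1$, il est stable par addition (in\'egalit\'e triangulaire), par multiplication (multiplicativit\'e) et par $a\mapsto -a$ car $\abs{-a}=\abs{-1}\abs{a}=\abs{a}$. De m\^eme $\FFo$ est un id\'eal de $\FFp$ : si $\abs{a}=\abs{b}=0$ alors $\abs{a+b}\leq\abs{a}+\abs{b}=0$ ; si $\abs{a}=0$ et $b\in\FFp$ alors $\abs{ab}=\abs{a}\abs{b}=0$ ; et cet id\'eal est propre car $1\notin\FFo$.

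Le point d\'elicat, et le principal obstacle, est la maximalit\'e de $\FFo$, c'est-\`a-dire le fait que $\FFp/\FFo$ soit un corps. Soit $a\in\FFp\setminus\FFo$, de sorte que $0<\abs{a}<\pinfty$ ; alors $a$ est inversible dans le corps $\FF$ et je montrerais que $a^{-1}\in\FFp$. C'est ici qu'il faut \^etre prudent avec la restriction ``lorsque $\abs{a},\abs{b}<\pinfty$'' de l'axiome de multiplicativit\'e : en l'\'etendant par la convention naturelle qu'une valeur finie strictement positive multipli\'ee par $\pinfty$ vaut $\pinfty$, l'\'egalit\'e $\abs{a}\,\abs{a^{-1}}=\abs{a\,a^{-1}}=\abs{1}=1$ interdit $\abs{a^{-1}}=\pinfty$ (qui donnerait $\pinfty\neq 1$) et force donc $\abs{a^{-1}}=\abs{a}^{-1}<\pinfty$. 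Ainsi $\overline{a}\in\FFp/\FFo$ est inversible, toute classe non nulle est inversible, et $\FFo$ est maximal.

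Pour la valeur absolue induite, je poserais $\abs{\overline{a}}:=\abs{a}$ pour $a\in\FFp$. C'est bien d\'efini : si $a\equiv a'$ modulo $\FFo$, alors $\abs{a}\leq\abs{a'}+\abs{a-a'}=\abs{a'}$ et, sym\'etriquement, $\abs{a'}\leq\abs{a}$, donc $\abs{a}=\abs{a'}$. Les trois axiomes passent sans changement au quotient ; de plus $\abs{\overline{a}}<\pinfty$ pour toute classe (puisque $a\in\FFp$), et $\abs{\overline{a}}=0$ entra\^ine $a\in\FFo$, soit $\overline{a}=0$. C'est donc bien une valeur absolue (au sens de la d\'efinition) sur $\FFp/\FFo$.

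Enfin, supposons $\FF$ muni d'un ordre total $\leq$ compatible avec $\abs{\ }$. L'observation cl\'e est que $\FFo$ est convexe pour l'ordre dans $\FFp$ : si $0\leq b\leq a$ avec $a\in\FFo$, alors $\abs{b}\leq\abs{a}=0$ par compatibilit\'e, donc $b\in\FFo$. Je prendrais alors pour c\^one positif $P=\{\overline{a}\ :\ a\in\FFp,\ a\geq 0\}$ : la convexit\'e de $\FFo$ donne $P\cap(-P)=\{0\}$ (si $\overline{a}=\overline{b}=-\overline{c}$ avec $b,c\geq 0$, alors $b+c\in\FFo$ et $0\leq b\leq b+c$ forcent $b\in\FFo$), tandis que le fait que $\leq$ soit total donne $P\cup(-P)=\FFp/\FFo$, la stabilit\'e par somme et produit \'etant claire. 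Ceci munit $\FFp/\FFo$ d'un ordre total, dont la compatibilit\'e avec la valeur absolue induite s'obtient en relevant une in\'egalit\'e $0\leq\overline{b}\leq\overline{a}$ en des repr\'esentants $0\leq b\leq a$ de $\FFp$ (en absorbant l'ind\'etermination gr\^ace \`a la convexit\'e de $\FFo$), puis en appliquant la compatibilit\'e sur $\FF$. Tout le reste de l'\'enonc\'e est une cons\'equence formelle des axiomes ; seule l'inversion assurant la maximalit\'e de $\FFo$ demande l'argument ci-dessus.
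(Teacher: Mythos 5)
The paper gives no proof of this proposition at all --- it is introduced by ``on d{\'e}montre sans difficult{\'e}s les r{\'e}sultats suivants'' and closed immediately --- so you are supplying an argument the paper leaves to the reader, not paralleling one. Your skeleton is the intended one, and its genuinely substantive steps are correct: well-definedness of the induced absolute value (from $\abs{a}\leq\abs{a'}+\abs{a-a'}$), reduction of the maximality of $\FFo$ to the finiteness of $\abs{a^{-1}}$ when $0<\abs{a}<\pinfty$, and the passage of the order to the quotient via the cone $P=\{\bar a\ |\ a\geq 0\}$, using the convexity of $\FFo$ (i.e.\ compatibility of the order with $\abs{\ }$) for antisymmetry, and the lift of $0\leq\bar b\leq\bar a$ to representatives $0\leq b\leq a$ for compatibility of the quotient order.

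The gap is at the point you yourself flag, and your fix is only half of what is needed. You are right that the restricted multiplicativity axiom must be strengthened: read literally, the statement is false --- on $\FF=\QQ$, the \pseudova{} equal to the usual absolute value on $\ZZ$ and to $\pinfty$ off $\ZZ$ satisfies the paper's three axioms, yet $\FFp=\ZZ$ and $\FFo=\{0\}$ is not maximal. Your convention (finite nonzero times $\pinfty$ equals $\pinfty$) rules this out and makes the inverse argument work. But as you state it, the convention says nothing about $\pinfty\cdot\pinfty$, hence does not forbid $\abs{-1}=\pinfty$: the \pseudova{} on $\QQ$ defined by $\abs{a}=a$ for $a\geq 0$ and $\abs{a}=\pinfty$ for $a<0$ satisfies your extended axioms, and for it $\FFp=\QQ_{\geq 0}$ is not even a subring, so your step $\abs{-a}=\abs{-1}\abs{a}=\abs{a}$ breaks down. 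Likewise $\abs{1}=0$ and $\abs{1}=\pinfty$ genuinely occur under the paper's definition (the identically zero map, and the map equal to $\pinfty$ off $0$, are both pseudo-valeurs absolues), and in those cases the quotient is the zero ring; so ``{\'e}carter les cas d{\'e}g{\'e}n{\'e}r{\'e}s'' is an extra hypothesis, not something that can be waved through. The clean repair is either to posit multiplicativity whenever the product avoids the indeterminate form $0\cdot\pinfty$ (this kills all three pathologies at once, and your proof then goes through verbatim), or to observe that the \pseudova{} to which the paper actually applies the proposition, namely $\abslaet{\classet{a_k}}=\limom\abs{a_k}^{1/\lak}$ in Proposition~\ref{prop- absom}, satisfies $\abs{1}=1$, $\abs{-a}=\abs{a}$ and $\abs{a}\,\abs{a^{-1}}=1$ outright, so that for it your argument needs no convention at all.
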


\begin{defi}[\Pseudonorme]
On appellera {\em \pseudonorme} sur $E$ (compatible avec $\abs{\ }$)  
une application $\norm:E \fleche [0,\pinfty]$ 
telle que pour tous $v,v'\in E$ et $a\in\FF$
on a
\begin{enumerate}
\item $\norm(0)=0$
\item 
$\norm(av) = \abs{a}\norm(v)$ (lorsque cela a un sens)
\item 
$\norm(v+v') \leq \norm(v)+\norm(v')$
\end{enumerate}

On dira que $\norm$  est {\em ultram{\'e}trique} si on a
en outre 
$\norm(a+b) \leq \max(\norm(a),\norm(b))$. 
On dira que $\norm$  est une {\em norme} 
si $\abs{\ }$ est une valeur absolue et 
si $\norm$ v{\'e}rifie  en outre $\norm(v)<\pinfty$ et
$\norm(v)=0\Rightarrow v=0$. 
\end{defi}

Soit  $\norm$ une \pseudonorme{} sur $E$ (compatible avec $\abs{\ }$).

\begin{prop}[Espace vectoriel norm{\'e} associ{\'e}]
\label{prop- evn associe a une pseudonorme}
On a deux sous-$\FFp$ modules
  \(\Ep=\{v\in E\ |\ \norm(v)<\pinfty\}\) et
  \(\Eo=\{v\in E\ |\ \norm(v)=0\}\). 
La structure passe au quotient 
en une structure de $\FF_{/\abs{\ }}$-espace vectoriel 
sur $E_{/\norm}=\Ep/\Eo$,
et $\norm$ passe au quotient en une norme
sur $E_{/\norm}$ compatible avec $\abs{\ }$.
\qed
\end{prop}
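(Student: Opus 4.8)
The plan is to follow, along the lines of the proof of Proposition~\ref{prop- corps value associe a une pseudova}, by checking that the module structure and the norm are compatible with passing to the quotient. The only step that is more than bookkeeping is the verification that scalar multiplication descends from $\FFp$ to the quotient field $\FF_{/\abs{\ }}$, and this is the crux of the argument.

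First I would check that $\Ep$ and $\Eo$ are sub-$\FFp$-modules of $E$, with $\Eo\subseteq\Ep$. Both contain $0$ since $\norm(0)=0$. If $v,v'\in\Ep$ and $a\in\FFp$, the triangle inequality gives $\norm(v+v')\leq\norm(v)+\norm(v')<\pinfty$, while $\norm(av)=\abs a\,\norm(v)<\pinfty$ because both factors are finite; hence $v+v',av\in\Ep$. The identical computations, with the value $0$ in place of ``$<\pinfty$'', show that $\Eo$ is a sub-$\FFp$-module (here $\norm(av)=\abs a\,\norm(v)=0$ whenever $\abs a<\pinfty$ and $\norm(v)=0$).

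The main point, which upgrades $E_{/\norm}$ from an $\FFp$-module to a vector space over $\FF_{/\abs{\ }}$, is the inclusion $\FFo\cdot\Ep\subseteq\Eo$: for $a\in\FFo$ (so $\abs a=0$) and $v\in\Ep$ (so $\norm(v)<\pinfty$) one has $\norm(av)=\abs a\,\norm(v)=0$, whence $av\in\Eo$. It follows that the $\FFp$-action $\ov a\cdot\ov v=\overline{av}$ on $\Ep/\Eo$ is well defined --- in the $v$ variable because $\Eo$ is a $\FFp$-submodule, in the $a$ variable because $\FFo\Ep\subseteq\Eo$ --- and that it factors through $\FFp/\FFo=\FF_{/\abs{\ }}$, endowing $E_{/\norm}$ with its structure of $\FF_{/\abs{\ }}$-vector space.

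It then remains to transport $\norm$ to the quotient. Since $\abs{-1}=1$ one has $\norm(-w)=\norm(w)$, so if $\norm(v-v')=0$ the triangle inequality yields $\norm(v)\leq\norm(v')+\norm(v-v')=\norm(v')$ and symmetrically; thus $\norm$ is constant on $\Eo$-cosets and induces a map $\ov\norm:E_{/\norm}\fleche[0,\pinfty)$, finite because we work on $\Ep$. The axioms transport directly: $\ov\norm(\ov 0)=0$; $\ov\norm(\ov a\,\ov v)=\abs a\,\norm(v)=\abs{\ov a}\,\ov\norm(\ov v)$ for the absolute value induced on $\FF_{/\abs{\ }}$ by Proposition~\ref{prop- corps value associe a une pseudova}; and the triangle inequality passes to the quotient. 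Definiteness is built into the construction: $\ov\norm(\ov v)=0$ means $v\in\Eo$, i.e. $\ov v=0$. Hence $\ov\norm$ is a norm on $E_{/\norm}$ compatible with the induced absolute value. The only recurring subtlety is to make sure the identity $\norm(av)=\abs a\,\norm(v)$ genuinely ``makes sense'' at each use, which it does throughout, since for $a\in\FFp$ and $v\in\Ep$ every quantity involved is finite and the only product of the form $0\cdot\norm(v)$ that we invoke has $\norm(v)<\pinfty$.
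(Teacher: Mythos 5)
Your proof is correct and is essentially the paper's (implicit) argument: the paper states this proposition with no proof at all (it is prefaced by ``on d{\'e}montre sans difficult{\'e}s les r{\'e}sultats suivants''), and your verification --- whose one real point is the inclusion $\FFo\cdot\Ep\subseteq\Eo$, which is exactly what makes the $\FFp$-action factor through $\FF_{/\abs{\ }}=\FFp/\FFo$ --- is precisely the routine check left to the reader. The only detail worth flagging is the appeal to $\abs{-1}=1$: this is not a formal consequence of the bare axioms for a \pseudova{}, but it does hold in the setting of Proposition \ref{prop- corps value associe a une pseudova} (where $-1\in\FFp$ and $\abs{1}=1$), and one can bypass it entirely by noting that $\Eo$ is an $\FFp$-submodule, so $v-v'\in\Eo$ if and only if $v'-v\in\Eo$.
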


On utilisera {\'e}galement la propri{\'e}t{\'e}
analogue suivante.
 \begin{prop}
\label{prop- algn  associe a une  pseudonorme}

Soit $A$ une alg{\`e}bre norm{\'e}e sur $\FF$ et $N$ une \pseudonorme{} sur
$A$ sous-multiplicative.
Notons
  \(\Ap=\{u\in A\ |\ N(u)<\pinfty\}\) et
  \(\Ao=\{v\in A\ |\ N(u)=0\}\). 
La structure passe au quotient 
en une structure de $\FF_{/\abs{\ }}$-alg{\`e}bre
sur $\A_{/N}=\Ap/\Ao$,
et $\norm$ passe au quotient en une norme
sous-multiplicative sur $A_{/N}$ compatible avec $\abs{\ }$.
\qed
 \end{prop}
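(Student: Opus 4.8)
The plan is to reduce everything to the normed-vector-space case already established in Proposition~\ref{prop- evn associe a une pseudonorme} and then add the single extra ingredient supplied by sub-multiplicativity. Regarding $A$ merely as a normed $\FF$-vector space carrying the pseudo-norm $N$, that proposition gives at once that $\Ap$ and $\Ao$ are sub-$\FFp$-modules of $A$, that the quotient $A_{/N}=\Ap/\Ao$ inherits a natural $\FF_{/\abs{\ }}$-vector space structure, and that $N$ descends to a norm on $A_{/N}$ compatible with $\abs{\ }$. It thus remains only to carry over the multiplication and to check that the descended norm stays sub-multiplicative.

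First I would check that $\Ap$ is stable under the product, hence a sub-$\FFp$-algebra of $A$: for $u,v\in\Ap$ sub-multiplicativity gives $N(uv)\leq N(u)N(v)<\pinfty$, so $uv\in\Ap$. Next I would show that $\Ao$ is a two-sided ideal of $\Ap$: if $u\in\Ao$ and $v\in\Ap$, then $N(uv)\leq N(u)N(v)=0$ and symmetrically $N(vu)=0$, whence $uv,vu\in\Ao$. Since the product on $\Ap$ is $\FFp$-bilinear, it therefore descends to a well-defined $\FFp/\FFo$-bilinear product on $\Ap/\Ao$, which, combined with the vector space structure from the first step, makes $A_{/N}=\Ap/\Ao$ an $\FF_{/\abs{\ }}$-algebra.

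Finally I would verify that the induced norm $\ov N$ on $A_{/N}$ remains sub-multiplicative: writing $[u],[v]$ for the classes of $u,v\in\Ap$, one has $\ov N([u][v])=\ov N([uv])=N(uv)\leq N(u)N(v)=\ov N([u])\,\ov N([v])$, the first two equalities being the very definition of the descended multiplication and of the descended norm. Compatibility with $\abs{\ }$ is inherited verbatim from Proposition~\ref{prop- evn associe a une pseudonorme}. I expect no genuine obstacle here, the whole statement being a routine transport of structure; the only point deserving a moment's attention is that sub-multiplicativity is exactly what makes $\Ao$ absorbing (without it the zero-norm elements need not form an ideal), and that every product one meets stays inside $\Ap$, where $N$ is finite and the inequality $N(uv)\leq N(u)N(v)$ involves no indeterminate form.
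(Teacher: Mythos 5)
Your proof is correct and is precisely the routine transport-of-structure argument the paper intends: the paper states this proposition (and the preceding ones) with the remark that they are proved \emph{sans difficult{\'e}s} and omits the details, which you have filled in faithfully by reducing to Proposition~\ref{prop- evn associe a une pseudonorme} and using sub-multiplicativity to make $\Ap$ a subalgebra and $\Ao$ a two-sided ideal. Your closing observation --- that all products are taken inside $\Ap$ so that $N(uv)\leq N(u)N(v)$ never involves the indeterminate form $0\cdot\pinfty$ --- is exactly the one point worth flagging.
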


\begin{defi}[\Pseudonorme{} d'endomorphismes]
On d{\'e}finit la  {\em \pseudonorme{}}
$\Nnorm$ sur $\End(E)$
{\em associ{\'e}e} {\`a}  $\norm$   par
\[\Nnorm(u)=\inf\left\{M\in [0,\pinfty]\ 
;\ \forall v\in E, \norm(u(v))\leq M\norm(v)\right\}\]
On v{\'e}rifie que 
c'est une \pseudonorme{} sous-multiplicative sur $\End(E)$.
\end{defi}

\subsection{Espace $\Es=\BNl(V)$ des bonnes normes de volume $1$}
\label{ss- normes}
On expose ici la construction de l'espace $\Es$ 
associ{\'e} {\`a} $\SLn(\KK)$ comme espace de normes sur
$\KK^n$ (voir aussi \cite{ParImm}).

 On note $V=\KK^n$ et $\eo=(\eoln)$ sa base canonique.

Dans le cas o{\`u} $\KK=\RR$ ou $\CC$, 
on dira qu'une norme $\norm$ sur $V$ est une {\em bonne} norme si
elle est {\em euclidienne}, c'est-{\`a}-dire si elle est de la forme $v\mapsto
\sqrt{q(v)}$, o{\`u} $q$ est 
 une forme quadratique si $\KK=\RR$, hermitienne si $\KK=\CC$, d{\'e}finie positive 
sur $V$.
On dira qu'une base $\e=(\eln)$ de $V$ est {\em adapt{\'e}e} {\`a} $\norm$ si
elle est {\em orthogonale} pour $\norm$, c'est-{\`a}-dire si on a
$\norm(\sum_{i=1}^n a_ie_i)=\sqrt{\sum_{i=1}^n \norm(e_i)^2\abs{a_i}^2}$ 
pour tous $\aln$ dans $\KK$.  
La bonne norme {\em canonique} $\normo$ de $V$ est la norme
provenant du produit scalaire canonique.
Dans le cas o{\`u} $\KK$ est ultram{\'e}trique, 
Une base $\e$ de $V$ est dite {\em adapt{\'e}e} {\`a} 
une norme ultram{\'e}trique $\norm$ si
on a
$\norm(\sum_{i=1}^n a_ie_i)=\max\limits_\ilan \norm(e_i)\abs{a_i}$ 
pour tous $\aln$ dans $\KK$.
On dira que $\norm$ est
{\em adaptable} si elle admet une base  adapt{\'e}e.
(ce qui est toujours le cas si $\KK$ est un corps local \cite{GoIw}).
Une {\em bonne} norme est une norme ultram{\'e}trique et adaptable.
La bonne norme {\em canonique} est
$\normo:(\aln)\mapsto\max\limits_\ilan \abs{a_i}$.

On dit qu'une bonne norme $\norm$ est de {\em volume} $1$ si on a
$\prod_{i=1}^n \norm(e_i) =1$ pour toute base $(\eln)$ de $V$
adapt{\'e}e {\`a} $\norm$, de d{\'e}terminant $1$.
On note $\BNl(V)$ l'espace des bonnes normes de volume
$1$ sur $V$.

Le groupe $G=\SLn(\KK)$ agit naturellement sur $\BNl(V)$  par
$g.\norm=\norm\circ g^{-1}$ pour $g\in \SLn(\KK)$ et $\norm\in\BNl(V)$.
Le stabilisateur de $\normo$ est $\SO(n)$ si $\KK=\RR$, le groupe
$\SU(n)$ si $\KK=\CC$, et $\SLn(\mathcal{O})$ si $\KK$ est
ultram{\'e}trique et $\mathcal{O}=\{a\in\KK,\abs{a}\leq 1\}$ est son anneau des entiers.
Dans le cas o{\`u} $\KK=\RR$ ou $\CC$,  l'espace $\BNl(V)$ s'identifie {\`a} l'espace sym{\'e}trique $\Es$
associ{\'e} {\`a} $\SLn(\KK)$ (voir \ref{ss- espace sym associe a un groupe reductif reel}).
Dans le cas o{\`u} $\KK$ est ultram{\'e}trique,
l'espace $\BNl(V)$
est alors un immeuble affine (voir par exemple \cite{ParImm}), qui
s'identifie {\`a} l'immeuble de
Bruhat-Tits $\Es$ de $\SLn(\KK)$, voir les th{\'e}or{\`e}mes 2.11 et 2.11bis
(dans l'appendice) de \cite{BrTi_schemas}.

\medskip

Rappelons maintenant quelques propri{\'e}t{\'e}s  dont on aura
besoin par la suite.

\paragraph{Plats maximaux marqu{\'e}s.}
Le plat maximal mod{\`e}le  de $\BNl(V)$ est identifi{\'e} {\`a}
 \[\Aa=\{(\afln) \in \RR^n,\ \sum \afi = 0\}\]
muni de la m{\'e}trique euclidienne canonique de $\RR^n$.
Soit $\e$ une base de $V$ de d{\'e}\-ter\-mi\-nant $1$.
Le plat maximal marqu{\'e} $f_\e:\Aa \fleche \BNl(V)$
de $\BNl(V)$ {\em associ{\'e}} {\`a} $\e$ est d{\'e}fini par :
$\norm=f(\af)$ est adapt{\'e}e {\`a} $\e$ et $\norm(e_i)=e^{-\afi}$ pour
$\ilan$.
Tous les plats maximaux marqu{\'e}s de $\BNl(V)$ sont
de cette forme.
Pour tout $g\in \SLn(\KK)$, on a $g\circ f_\e=f_{g\e}$.  

\paragraph{Les distances $d$ et $\dinf$.} 
Si $\norm,\normp\in\BNl(V)$
et $\e$ est une base adapt{\'e}e commune, 
alors 
\begin{equation}  \label{eq- d} 
d(\norm,\normp)
=\sqrt{\sum_{i=1}^n \abs{\log {\normp\over\norm}(e_i)}^2}\; .
\end{equation} 
Un autre distance naturelle plus simple sur $\BNl(V)$ est
\[\dinf(\norm,\normp)=\sup\limits_{v\neq 0}\abs{\log
  {\normp\over\norm}(v)} \; .\]
Les distances $d$ et $\dinf$  sont {\'e}quivalentes, car 
\cite[prop. 3.12]{ParImm}
\begin{equation} \label{eq- d et dinf}
\dinf \leq d\leq \sqrt{n}\ \dinf \mbox{\ \ sur } \BNl(V).
\end{equation}

\paragraph{Estimations.}
Soit $\Neo=N_{\normo}$ la norme sur $\End(V)$ associ{\'e}e {\`a} $\normo$. 
Pour tout $g\in \SLn(\KK)$, on a 
$\sup\limits_{v\neq 0}{g.\normo\over\normo}(v) = \Neo(g^{-1})$
et donc
\begin{equation} \label{eq- d neuo et N(g)}
  \begin{array}{ccl}
      \dinf(\normo,g.\normo) &=&
      \max \left\{ \log \Neo(g),\ \log \Neo(g^{-1})\right\}\\
      &\leq& (n-1)\;\log \Neo(g)
  \end{array}
\end{equation}
Le lemme suivant sera utilis{\'e} en section \ref{ss- conas et imm de BT}. 

\begin{lemm} \label{lemm- d neu, g.neu et N(g-I)}
Soit $\norm\in \BNl(V)$
et $g\in \SLn(\KK)$. On a 
\[{\dinf(\norm,g.\norm)}\leq \log \left( 1+ e^{2\dinf(\normo, \norm)} \max \left\{
\Neo(g-\id),\ \Neo(g^{-1}-\id) \right\} \right)\;.\]
\end{lemm}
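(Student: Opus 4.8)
The plan is to work directly from the definition
$\dinf(\norm,g.\norm)=\sup_{v\neq 0}\left|\log\frac{(g.\norm)(v)}{\norm(v)}\right|$
and to reduce the bound to the comparison between the operator pseudo-norm $\Nnorm$ attached to $\norm$ and the reference one $\Neo=N_{\normo}$. First I would unwind the action: since $g.\norm=\norm\circ g^{-1}$, one has $(g.\norm)(v)=\norm(g^{-1}v)$, so the quantity to control is $\left|\log\frac{\norm(g^{-1}v)}{\norm(v)}\right|$ for $v\neq 0$. For the positive part of this absolute value I write $g^{-1}v=v+(g^{-1}-\id)v$ and combine the triangle inequality with the defining inequality $\norm(u(v))\le\Nnorm(u)\norm(v)$ of the associated pseudo-norm, obtaining $\norm(g^{-1}v)\le(1+\Nnorm(g^{-1}-\id))\norm(v)$. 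For the negative part I set $w=g^{-1}v$ and run the same estimate for $g$ instead of $g^{-1}$, namely $\norm(v)=\norm(gw)\le(1+\Nnorm(g-\id))\norm(w)$. Combining the two directions and taking the supremum over $v$, and using that $\log(1+\cdot)$ is nondecreasing, yields
\[\dinf(\norm,g.\norm)\le\log\bigl(1+\max\{\Nnorm(g-\id),\Nnorm(g^{-1}-\id)\}\bigr).\]

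The heart of the argument is then to replace $\Nnorm$ by $\Neo$. Writing $\delta=\dinf(\normo,\norm)$, the definition of $\dinf$ gives the two-sided comparison $e^{-\delta}\normo(v)\le\norm(v)\le e^{\delta}\normo(v)$ for every $v$, whence, using $\Nnorm(u)=\sup_{v\neq 0}\frac{\norm(u(v))}{\norm(v)}$,
\[\Nnorm(u)\le e^{2\delta}\sup_{v\neq 0}\frac{\normo(u(v))}{\normo(v)}=e^{2\delta}\Neo(u)\]
for all $u\in\End(V)$. Applying this to $u=g-\id$ and $u=g^{-1}-\id$ and feeding it into the previous display produces exactly the claimed inequality, since the factor $e^{2\dinf(\normo,\norm)}$ pulls out of the $\max$.

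I do not expect a genuine obstacle here: the estimate is a soft combination of the triangle inequality, the defining (sub-multiplicative) property of the operator pseudo-norm, and the two-sided comparison of $\norm$ with $\normo$ coming from $\dinf$. The only point requiring a little care is keeping the two directions ($g$ and $g^{-1}$) straight, so that both terms appear inside the single $\max$ and the single factor $e^{2\dinf(\normo,\norm)}$ comes out precisely as stated.
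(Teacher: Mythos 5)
Your proof is correct and follows essentially the same route as the paper's: the triangle inequality applied to $g^{-1}v=v+(g^{-1}-\id)v$, the two-sided comparison $e^{-\delta}\normo\leq\norm\leq e^{\delta}\normo$ with $\delta=\dinf(\normo,\norm)$ (which is where the factor $e^{2\delta}$ comes from), and your substitution $w=g^{-1}v$ for the other direction, which is exactly the paper's step of ``exchanging $\norm$ and $g.\norm$''. The only cosmetic difference is that you package the key estimate as the standalone operator-norm comparison $\Nnorm(u)\leq e^{2\dinf(\normo,\norm)}\Neo(u)$, whereas the paper runs the same chain of inequalities inline on $\norm(hv)$.
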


\begin{preuve}
Pour tout $v\in V$ non nul, on a
$g.\norm(v) \leq \norm(v)+\norm((g^{-1}-\id)v)$, et  
en posant $h=g^{-1}-\id$ on a
\[\norm(hv) \leq e^{\dinf(\normo, \norm)}\normo(hv)
\leq e^{\dinf(\normo, \norm)}\Neo(h)\normo(v)
\leq e^{2\dinf(\normo, \norm)}\Neo(h)\norm(v)\]
donc $\sup\limits_{v\neq 0}{g.\norm\over\norm}(v) \leq
1+e^{2\dinf(\normo, \norm)} \Neo(g^{-1}-\id)$.
On conclut en {\'e}changeant $\norm$ et $g.\norm$.
\end{preuve}

\medskip

{\bf On se fixe d{\'e}sormais 
un ultrafiltre $\om$ sur $\NN$ plus fin que
le filtre de Fr{\'e}chet (voir \ref{ss- prel conas})
et
une suite $\la=(\lak)_{k\in\NN}$ dans $[1,\pinfty[$ telle que
      $\lak\tend\infty$.
}

\subsection{Le c{\^o}ne asymptotique $\KKom$ du corps valu{\'e} $\KK$}
\label{ss- corps ultralimite}

\begin{prop}
 \label{prop- absom}\ 
On munit l'ensemble
\[\KKom=
\{(\ak)\in\KK^\NN\ |\ \abs{\ak}^\lslakh \mbox{ est
  born{\'e}}\}_{/\limom\abs{\ak-\bk}^\lslakh= 0}\] de l'addition et
de la multiplication terme {\`a} terme. 
On plonge $\KK$ dans $\KKom$ via les suites constantes.
Pour $\aom=\classom{\ak}$ dans $\KKom$, on note
 \[\absom{\aom}=\limom \abs{\ak}^\lslakh\]
Alors $\KKom$ est un corps et $\absom{\ }$ est une valeur absolue
ultram{\'e}trique sur $\KKom$, triviale sur $\KK$. 
Ce corps valu{\'e}
sera appel{\'e} le {\em c{\^o}ne asymptotique} de $(\KK,\abs{\ })$ par rapport
{\`a} la suite de scalaires $(\lak)$.

\medskip

Dans le cas o{\`u} $\KK=\RR$, on peut d{\'e}finir une relation
d'ordre total sur $\KKom$ par
\[\aom\leqom \bom \Leftrightarrow \ak\leq \bk \mbox{ \ompp}\]
Alors $\absom{\ }$ est compatible avec l'ordre.
\end{prop}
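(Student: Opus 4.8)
The plan is to realize $\KKom$ as the valued field associated, via Proposition~\ref{prop- corps value associe a une pseudova}, to the ultraproduct field $\et{\KK}=\KK^\NN/\eqom$ equipped with a suitable \pseudova. First I would recall that $\et{\KK}$ is a field: since $\om$ is an ultrafilter, any class $\classet{\ak}\neq 0$ has $\ak\neq 0$ for $\om$-almost all $k$, and inverting those entries (setting the others to $0$) produces an inverse. On $\et{\KK}$ I set $\absom{\classet{\ak}}=\limom\abs{\ak}^\lslakh\in[0,\pinfty]$; this is well defined because two $\eqom$-equal sequences agree $\om$-almost everywhere, hence so do the sequences $\abs{\ak}^\lslakh$. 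With this choice, the subring $\FFp=\{\classet{\ak}:\absom{\classet{\ak}}<\pinfty\}$ is the set of ($\om$-)bounded sequences and the ideal $\FFo=\{\classet{\ak}:\absom{\classet{\ak}}=0\}$ gives exactly the relation $\limom\abs{\ak-\bk}^\lslakh=0$ of the statement, so that $\KKom=\FFp/\FFo$ canonically.

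Next I would verify that $\absom{\cdot}$ is an ultrametric \pseudova{} on $\et{\KK}$. Positivity at $0$ and multiplicativity are immediate from $\abs{\ak\bk}^\lslakh=\abs{\ak}^\lslakh\abs{\bk}^\lslakh$ together with the compatibility of $\limom$ with products of $\om$-bounded sequences. The one real computation, and the crux of the argument, is the ultrametric inequality: from $\abs{\ak+\bk}\leq 2\max(\abs{\ak},\abs{\bk})$ and the monotonicity of $t\mapsto t^\lslakh$ one gets $\abs{\ak+\bk}^\lslakh\leq 2^\lslakh\max(\abs{\ak}^\lslakh,\abs{\bk}^\lslakh)$, and since $\lak\tend\pinfty$ the factor $2^\lslakh=2^{1/\lak}$ tends to $1$; passing to the $\om$-limit (which commutes with $\max$ and with multiplication by $2^\lslakh$, whose $\om$-limit is $1$) yields $\absom{\classet{\ak}+\classet{\bk}}\leq\max(\absom{\classet{\ak}},\absom{\classet{\bk}})$. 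Proposition~\ref{prop- corps value associe a une pseudova} then shows that $\absom{\cdot}$ descends to an ultrametric absolute value on the field $\KKom=\FFp/\FFo$. Finally, for $a\in\KK$ with $a\neq 0$ the constant sequence gives $\absom{a}=\limom\abs{a}^{1/\lak}=\abs{a}^0=1$ (as $\abs{a}\in\RRpet$), so the induced absolute value is trivial on $\KK$.

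For the ordered case $\KK=\RR$, I would first equip $\et{\RR}$ with the total order $\classet{\ak}\leqom\classet{\bk}\Leftrightarrow\ak\leq\bk$ $\om$-almost everywhere; this is well defined on $\et{\RR}$, since modifying representatives on an $\om$-null set does not change the truth of the comparison, and it makes $\et{\RR}$ an ordered field. Compatibility of $\absom{\cdot}$ with this order is checked directly: if $0\leq\bk\leq\ak$ $\om$-almost everywhere then $\abs{\bk}^\lslakh\leq\abs{\ak}^\lslakh$ $\om$-almost everywhere by monotonicity of $t\mapsto t^\lslakh$, whence $\absom{\classet{\bk}}\leq\absom{\classet{\ak}}$ by monotonicity of $\limom$. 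The last sentence of Proposition~\ref{prop- corps value associe a une pseudova} then provides the descent of the order to $\KKom$ together with its compatibility with the absolute value. The subtlety I expect here, and the one point deserving care, is that the stated representative formula must be read through the double quotient $\et{\RR}\to\KKom$: the comparison is representative-independent already on $\et{\RR}$, and it passes to $\KKom$ precisely because $\FFo$ is convex in $\FFp$ (if $0\leq y\leq x$ and $\absom{x}=0$ then $\absom{y}=0$), which is exactly what compatibility guarantees.
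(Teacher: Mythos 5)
Your proposal is correct and takes essentially the same route as the paper: both realize $\KKom$ as the valued field canonically associated (via Proposition \ref{prop- corps value associe a une pseudova}) to the ultraproduct $\et{\KK}$ equipped with the termwise operations and the pseudo-valeur absolue $\classet{\ak}\mapsto\limom\abs{\ak}^{1/\lak}$, with the total order on $\RRet$ descending to $\KKom$ by the last clause of that same proposition. The only (minor) divergence is that the paper deduces ultrametricity in the case $\KK=\RR$ from Proposition \ref{prop- corps value ordonne} (triviality on the prime subfield plus compatibility with the order) and leaves the remaining verifications as routine, whereas you prove the ultrametric inequality directly from $\abs{\ak+\bk}\le 2\max(\abs{\ak},\abs{\bk})$ and $2^{1/\lak}\to 1$, an argument that treats all $\KK$ uniformly.
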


\begin{rema}
\label{rema- KKom}
  On notera que, si $\dk$ d{\'e}signe la distance sur $\KK$ induite par la
valeur absolue $\abs{\ }^\lslak$ 
 et $\dom$ d{\'e}signe la distance sur $\KKom$ induite par la
valeur absolue $\absom{\ }$, alors l'espace m{\'e}trique point{\'e}
$(\KKom,\dom,0)$ est l'ultralimite des espaces m{\'e}triques point{\'e}s
$(\KK,\dk,0)$ (en particulier $\KKom$ est complet).

\end{rema}

\begin{preuve} 
On utilise la description alternative de $\KKom$ via l'ultraproduit
$\KKet$ de $\KK$. On munit $\KKet$ des op{\'e}rations termes {\`a} terme, ce
qui en fait un corps. Pour $\aet=\classet{\ak}\in \KKet$ on d{\'e}finit
 $\abslaet{\aet}=\limom \abs{\ak}^\lslakh$
dans $[0,\pinfty]$.  
La relation $\leqet$ sur $\RRet$ d{\'e}finie par
$$\classet{a_k}\leqet \classet{b_k} 
\mbox{ si et seulement si } 
a_k\leq b_k\mbox{ \ompp{}}$$ 
fait de $\RRet$ un corps totalement ordonn{\'e}.
D'apr{\`e}s la proposition \ref{prop- corps value associe a une
  pseudova},  et la proposition \ref{prop- corps value ordonne}, 
il suffit de prouver que $\abslaet{\ }$ est une \pseudova{} sur
$\KKet$, triviale sur $\KK$, compatible avec l'ordre si $\KK=\RR$. 
On le v{\'e}rifie sans difficult{\'e}.
On a alors clairement que $(\KKom, \absom{\ })$ est le
corps valu{\'e} canoniquement associ{\'e} au corps pseudo-valu{\'e} $(\KKet,
\abslaet{\ })$.
\end{preuve}

\begin{rema}
1) Pour tout $\aom=\classom{a_k}\geq 0$ dans $\RRom$, il existe un
unique {\'e}l{\'e}ment positif de $\RRom$ de carr{\'e} $\aom$, qui est
$\sqrt{\aom}=\classom{\sqrt{a_k}}$.

2) Pour tous $\alnom$ dans $\RRom$, on a 
\begin{equation}
 \label{eq- supp} 
\absom{\sqrt{\psum_{i=1}^n (\aiom)^2}\;}  
=\max\limits_\ilan\absom{\aiom}
\end{equation}
(cela d{\'e}coule de
$\absom{\sqrt{\aom}\;}=\sqrt{\absom{\aom}}$ et
$\absom{\aom+\bom}=\max(\aom,\bom)$ pour $\aom,\bom\geqom 0$).
\end{rema}

\subsection{C{\^o}ne asymptotique de l'espace vectoriel norm{\'e} $V$}

L'espace vectoriel $V=\KK^n$ est muni de sa norme
canonique $\normo$ (voir section \ref{ss- normes}).

\begin{rema}
\label{rema- coeffs}
  Si  $\vk=(a^1_k,\ldots,a^n_k)$  dans $V=\KK^n$, 
on v{\'e}rifie sans difficult{\'e} (par (\ref{eq- supp}) dans le cas
archim{\'e}dien) que
\begin{equation}
  \limom\normo(\vk)^\lslakh=\max_i\limom\abs{\aik}^\lslakh
 \end{equation}
\end{rema}

\begin{prop}
 \label{prop- conas de V}\ 
On note
\[\Vom=
\{(\vk)\in V^\NN\ |\ \normo(\vk)^\lslakh \mbox{ est
  born{\'e}}\}_{/\limom\normo(\vk - \vpk)^\lslakh= 0}\] Alors l'addition et
la multiplication terme {\`a} terme d{\'e}finissent une structure de
$\KKom$-espace vectoriel sur $\Vom$.
Pour $\vom=\classom{\vk}$ dans $\Vom$, on note
 \[\normoom(\vom)=\limom \normo(\vk)^\lslakh\]
Alors  $\normoom$ est une norme ultram{\'e}trique sur $\Vom$, compatible
 avec $\absom{\ }$.
Cet espace vectoriel norm{\'e} 
sera appel{\'e} le {\em c{\^o}ne asymptotique} de $(V,\normo)$ par rapport
{\`a} la suite de scalaires $(\lak)$.

\end{prop}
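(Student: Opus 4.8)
Le plan est de reproduire la strat\'egie de la preuve de la proposition \ref{prop- absom}, en rempla\c{c}ant le formalisme des pseudo-valeurs absolues par celui des \pseudonormes{}. Je travaillerais avec l'ultraproduit $\Vet$ de $V$, qui est un espace vectoriel sur le corps $\KKet$ pour les op\'erations terme \`a terme, et j'y d\'efinirais l'application $\Neoom:\Vet\fleche[0,\pinfty]$ par $\Neoom(\classet{\vk})=\limom\normo(\vk)^\lslakh$. Il suffit alors de v\'erifier que $\Neoom$ est une \pseudonorme{} ultram\'etrique sur $\Vet$ compatible avec la pseudo-valeur absolue $\abslaet{\ }$, puis d'invoquer la proposition \ref{prop- evn associe a une pseudonorme}.

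La v\'erification des axiomes est directe. L'homog\'en\'eit\'e $\Neoom(\aet\vet)=\abslaet{\aet}\Neoom(\vet)$ d\'ecoule de l'\'egalit\'e $\normo(\ak\vk)=\abs{\ak}\normo(\vk)$ et de la multiplicativit\'e de $\limom$ sur $[0,\pinfty]$. L'in\'egalit\'e ultram\'etrique $\Neoom(\vet+\etp{\vet})\leq\max(\Neoom(\vet),\Neoom(\etp{\vet}))$, qui entra\^{\i}ne l'in\'egalit\'e triangulaire, est le point o\`u interviendrait la remarque \ref{rema- coeffs} : celle-ci identifie $\Neoom$ au maximum des $\absom{\ }$ des coordonn\'ees dans la base canonique, et l'on conclut par l'ultram\'etricit\'e de $\absom{\ }$ (proposition \ref{prop- absom}) appliqu\'ee coordonn\'ee par coordonn\'ee.

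Une fois \'etabli que $\Neoom$ est une \pseudonorme{} compatible avec $\abslaet{\ }$, la proposition \ref{prop- evn associe a une pseudonorme} fournit d'un seul coup la structure et la norme cherch\'ees : le quotient $\Vet_{/\Neoom}$ est un espace vectoriel sur le corps valu\'e $\KKet_{/\abslaet{\ }}=\KKom$, muni de la norme quotient compatible avec $\absom{\ }$, et l'ultram\'etricit\'e de $\Neoom$ descend imm\'ediatement au quotient. Comme $\Vom$ n'est autre que ce quotient $\Vet_{/\Neoom}$ (la relation d\'efinissant $\Vom$ \'etant pr\'ecis\'ement ``\^etre \`a $\Neoom$-distance nulle'', et la condition de bornitude s\'electionnant le sous-module des vecteurs de $\Neoom$ finie), ceci ach\`everait la preuve.

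La seule subtilit\'e que je surveillerais est le cas archim\'edien de l'in\'egalit\'e ultram\'etrique : c'est exactement l\`a que la remarque \ref{rema- coeffs} (cons\'equence de (\ref{eq- supp})) transforme, apr\`es passage \`a la puissance $\lslakh$ et \`a la $\om$-limite, la norme euclidienne en un maximum, faisant appara\^{\i}tre le caract\`ere non archim\'edien du c\^one asymptotique.
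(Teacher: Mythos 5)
Your proposal is correct and follows essentially the same route as the paper's proof: the paper likewise works in the ultraproduct $\Vet$ with termwise operations, checks that $\normoet(\classet{\vk})=\limom \normo(\vk)^\lslakh$ is a \pseudonorme{} compatible avec $\abslaet{\ }$, applies Proposition \ref{prop- evn associe a une pseudonorme} to obtain the quotient normed $\KKom$-vector space, and deduces ultrametricity from Remark \ref{rema- coeffs}. The only cosmetic difference is that you establish the ultrametric inequality already at the level of the pseudonorm on $\Vet$ (which then gives the triangle inequality and descends to the quotient), whereas the paper records ultrametricity only for the quotient norm $\normoom$; both verifications rest on the same identity (\ref{eq- supp}) in the archimedean case.
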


\begin{preuve}
  On utilise 
l'ultraproduit $\Vet$ de $V$, muni des op{\'e}rations terme {\`a}
terme, ce qui en fait un espace vectoriel sur $\KKet$.
Pour $\vet=\classet{\vk}\in \Vet$ on d{\'e}finit
 $\normoet(\vet)=\limom \normo(\vk)^\lslakh$ dans $[0,\pinfty]$.  
D'apr{\`e}s la proposition \ref{prop- evn  associe a une
  pseudonorme}
il suffit de prouver que $\normoet$ est une \pseudonorme{} sur
$\Vet$ compatible avec $\abslaet{\ }$.
On le v{\'e}rifie sans difficult{\'e}.
On a alors clairement que $(\Vom, \normoom)$ est l'espace vectoriel norm{\'e} 
canoniquement associ{\'e} {\`a} $(\Vet,\normoet)$.
Il est clair que $\normoom$ est ultram{\'e}trique par la remarque
\ref{rema- coeffs}.
\end{preuve}
On note $\vom=\classVom{\vk}$ le vecteur de $\Vom$ d{\'e}fini par
$\vet=\classet{\vk}\in\Vetp$.

\paragraph{Identification avec $(\KKom)^n$.}

La proposition suivante d{\'e}coule de la remarque \ref{rema- coeffs}

\begin{prop}
Pour $\ilan$, soit $\eoiom=\classVom{\eoi}$.  Alors
$\eoom=(\eoiom)_\ilan$ est une base de $\Vom$.  Elle
induit un isomorphisme d'espace vectoriels norm{\'e}s entre $\Vom=\omq{(\KK^n)}$
muni de $\normoom$ et $(\KKom)^n$ muni de la bonne norme
canonique. Plus pr{\'e}cis{\'e}ment,
pour tout $\vom$ de $\Vom$ de coordonn{\'e}es $(\alnom)$ dans la base
$\eoom$, on a
 \[\normoom(\vom)=\max\limits_\ilan \abslaet{\aiom}\] 
\end{prop}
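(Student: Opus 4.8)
\begin{preuve}
Le plan est de construire explicitement l'application coordonn{\'e}es et de v{\'e}rifier qu'elle est isom{\'e}trique ; tout reposera sur la remarque \ref{rema- coeffs}. Je commencerais par noter que chaque $\eoiom$ appartient bien {\`a} $\Vom$, car $\normo(\eoi)=1$ et la suite constante $(\eoi)$ est donc \ombornee. On d{\'e}finit ensuite $\Phi:\Vom\fleche(\KKom)^n$ en posant, pour $\vom=\classVom{\vk}$ {\'e}crit $\vk=\sum_i\aik\eoi$ dans $\KK^n$, $\Phi(\vom)=(\alnom)$ avec $\aiom=\classom{\aik}$.

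L'{\'e}tape qui demande le plus de soin --- et qui est en fait le seul vrai point {\`a} justifier --- est que $\Phi$ est bien d{\'e}finie. D'abord, les suites de coordonn{\'e}es fournissent des {\'e}l{\'e}ments de $\KKom$ : dans les deux cas (archim{\'e}dien et ultram{\'e}trique) on a $\abs{\aik}\leq\normo(\vk)$, si bien que $\abs{\aik}^\lslakh\leq\normo(\vk)^\lslakh$ est \omborne. Ensuite, l'ind{\'e}pendance vis-{\`a}-vis du repr{\'e}sentant s'obtient en appliquant la remarque \ref{rema- coeffs} {\`a} une diff{\'e}rence : si $\classVom{\vk}=\classVom{\uk}$ avec $\uk=\sum_i b^i_k\eoi$, alors $\limom\normo(\vk-\uk)^\lslakh=0$ entra{\^\i}ne $\limom\abs{\aik-b^i_k}^\lslakh=0$ pour chaque $i$, d'o{\`u} $\aiom=b^i_\om$. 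La $\KKom$-lin{\'e}arit{\'e} de $\Phi$ sera alors imm{\'e}diate, les op{\'e}rations {\'e}tant toutes d{\'e}finies terme {\`a} terme.

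J'{\'e}tablirais enfin la formule sur les normes, qui contient l'essentiel du r{\'e}sultat. Par d{\'e}finition de $\normoom$ et par la remarque \ref{rema- coeffs}, on a
\[\normoom(\vom)=\limom\normo(\vk)^\lslakh=\max\limits_\ilan\limom\abs{\aik}^\lslakh=\max\limits_\ilan\abslaet{\aiom}\; ,\]
ce qui est la formule annonc{\'e}e et montre que $\Phi$ est isom{\'e}trique. L'injectivit{\'e} en r{\'e}sulte car $\absom{\ }$ est une valeur absolue sur $\KKom$ (proposition \ref{prop- absom}) : $\normoom(\vom)=0$ force $\aiom=0$ pour tout $i$. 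Pour la surjectivit{\'e}, partant de $(\alnom)$ avec $\aiom=\classom{\aik}$, on pose $\vk=\sum_i\aik\eoi$ ; la remarque \ref{rema- coeffs} montre que $\normo(\vk)^\lslakh$ est \omborne, donc que $\vom=\classVom{\vk}\in\Vom$ avec $\Phi(\vom)=(\alnom)$.

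On conclura que $\Phi$ est un isomorphisme isom{\'e}trique de $(\Vom,\normoom)$ sur $(\KKom)^n$ muni de la bonne norme canonique, envoyant $\eoom=(\eoiom)_\ilan$ sur la base canonique, donc que $\eoom$ est une base de $\Vom$. Je ne m'attends {\`a} aucun obstacle profond : toute la substance se ram{\`e}ne {\`a} l'encadrement {\'e}l{\'e}mentaire $\abs{\aik}\leq\normo(\vk)$ et {\`a} la remarque \ref{rema- coeffs} d{\'e}j{\`a} {\'e}tablie.
\end{preuve}
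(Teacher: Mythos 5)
Votre preuve est correcte et suit exactement la même route que le papier : celui-ci se contente d'affirmer que la proposition \emph{découle de la remarque \ref{rema- coeffs}}, et votre rédaction est précisément cette dérivation, avec les vérifications de routine (bonne définition de l'application coordonnées, linéarité, formule de la norme, bijectivité) rendues explicites. Seul point cosmétique, dans la surjectivité : la définition de $\Vom$ demande que $\normo(\vk)^\lslakh$ soit \emph{bornée} (pas seulement \ombornee), ce qu'on obtient directement par l'estimation élémentaire $\normo(\vk)\leq\sqrt{n}\,\max_\ilan\abs{\aik}$ plutôt que par la remarque — ajustement trivial.
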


On identifiera dor{\'e}navant ces deux espaces, et on les notera $\KKom^n$.

\subsection{C{\^o}ne asymptotique de $\End(V)$ et de $\SLn(\KK)$}
L'alg{\`e}bre $\End(V)$
est munie de la
norme $\Neo$ associ{\'e}e {\`a} $\normo$.

\begin{prop}[C{\^o}ne asymptotique de $\End(V)$]
 \label{prop- conas de End(V)}\ 
On note
\[\omq{\End(V)}=
\{(\uk)\in \End(V)^\NN\ |\ \Neo(\uk)^\lslakh \mbox{ est
  born{\'e}}\}_{/\limom\Neo(\uk-\upk)^\lslakh= 0}\]
Les op{\'e}rations terme {\`a} terme d{\'e}finissent une structure
de $\KKom$-alg{\`e}bre sur $\omq{\End(V)}$. 

Pour $\uom=\classom{\uk}$ dans $\omq{\End(V)}$, on note
 \[\Neoom(\uom)=\limom \Neo(\uk)^\lslakh\]
Alors  $\Neoom$ est une norme ultram{\'e}trique sous-multiplicative sur 
$\omq{\End(V)}$, compatible
 avec $\absom{\ }$.
Cette alg{\`e}bre  norm{\'e}e 
sera appel{\'e} le {\em c{\^o}ne asymptotique} de $(\End(V),\Neo)$ par rapport
{\`a} la suite de scalaires $(\lak)$.

\end{prop}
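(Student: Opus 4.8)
Le plan est de reproduire la preuve de la proposition \ref{prop- conas de V}, en rempla{\c c}ant l'espace vectoriel norm{\'e} $(V,\normo)$ par l'alg{\`e}bre norm{\'e}e $(\End(V),\Neo)$ et en invoquant la proposition \ref{prop- algn  associe a une  pseudonorme} (version alg{\`e}bres) au lieu de celle sur les espaces vectoriels norm{\'e}s. Concr{\`e}tement, je consid{\'e}rerais l'ultraproduit $\etEndV$ de $\End(V)$ muni des op{\'e}rations terme {\`a} terme, qui est une $\KKet$-alg{\`e}bre, et j'y poserais $\Neoet(\uet)=\limom\Neo(\uk)^\lslakh\in[0,\pinfty]$ pour $\uet=\classet{\uk}$. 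La proposition \ref{prop- algn  associe a une  pseudonorme} ram{\`e}ne alors tout {\`a} v{\'e}rifier que $\Neoet$ est une \pseudonorme{} sous-multiplicative sur $\etEndV$ compatible avec $\abslaet{\ }$ : on en d{\'e}duira aussit{\^o}t que l'alg{\`e}bre norm{\'e}e canoniquement associ{\'e}e {\`a} $(\etEndV,\Neoet)$ est $(\omEndV,\Neoom)$, avec $\Neoom$ une norme sous-multiplicative compatible avec $\absom{\ }$.

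La v{\'e}rification des axiomes de \pseudonorme{} sous-multiplicative et de la compatibilit{\'e} est routini{\`e}re et je ne la d{\'e}taillerais pas : elle s'obtient en {\'e}levant {\`a} la puissance $\lslakh$ (op{\'e}ration croissante et multiplicative) puis en passant {\`a} la \omlimite{} dans les in{\'e}galit{\'e}s $\Neo(\uk+\upk)\leq\Neo(\uk)+\Neo(\upk)$, $\Neo(\ak\uk)=\abs{\ak}\,\Neo(\uk)$ et $\Neo(\uk\upk)\leq\Neo(\uk)\Neo(\upk)$ satisfaites par la norme d'op{\'e}rateurs $\Neo$.

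Le seul point demandant un argument sp{\'e}cifique --- et que j'attends comme principal obstacle --- est le caract{\`e}re ultram{\'e}trique de $\Neoom$, non {\'e}vident dans le cas archim{\'e}dien o{\`u} $\Neo$ elle-m{\^e}me ne l'est pas : la proposition \ref{prop- algn  associe a une  pseudonorme} ne fournit en effet qu'une norme sous-multiplicative. Contrairement au cas de $\Vom$, o{\`u} l'ultram{\'e}tricit{\'e} d{\'e}coulait de la description en coordonn{\'e}es (remarque \ref{rema- coeffs}), je l'obtiendrais par renormalisation. Pour tous $\uk,\upk$ dans $\End(V)$, l'in{\'e}galit{\'e} triangulaire donne $\Neo(\uk+\upk)\leq 2\max(\Neo(\uk),\Neo(\upk))$, d'o{\`u}
\[\Neo(\uk+\upk)^\lslakh\leq 2^\lslakh\max\left(\Neo(\uk)^\lslakh,\Neo(\upk)^\lslakh\right).\]
Comme $\lak\tend\pinfty$, on a $2^\lslakh\tend 1$ ; le passage {\`a} la \omlimite{} fournit alors $\Neoom(\classom{\uk}+\classom{\upk})\leq\max(\Neoom(\classom{\uk}),\Neoom(\classom{\upk}))$, soit exactement l'in{\'e}galit{\'e} ultram{\'e}trique. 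C'est ce ph{\'e}nom{\`e}ne de tropicalisation, o{\`u} l'in{\'e}galit{\'e} triangulaire d{\'e}g{\'e}n{\`e}re en in{\'e}galit{\'e} ultram{\'e}trique sous l'effet du passage au c{\^o}ne asymptotique, qui est ici le point essentiel.
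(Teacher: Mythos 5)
Your proposal is correct and takes essentially the same route as the paper: pass to the ultraproduct $\et{\End(V)}$ with termwise operations, set $\Neoet(\classet{\uk})=\limom \Neo(\uk)^\lslakh$, reduce everything to the proposition on normed algebras associated to a sub-multiplicative pseudonorm, and verify the axioms by taking $\om$-limits of the corresponding inequalities for $\Neo$. The one point the paper dispatches with ``on le v{\'e}rifie sans difficult{\'e}'' --- ultrametricity, which that proposition does not itself supply --- is exactly the point you make explicit, and your tropicalization argument, namely $\Neo(\uk+\upk)^\lslakh \leq 2^\lslakh \max\left(\Neo(\uk)^\lslakh,\Neo(\upk)^\lslakh\right)$ together with $2^\lslakh\tend 1$, is valid.
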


\begin{preuve}
De m{\^e}me qu'avant, 
il suffit de prouver que 
 $\Neoet(\uet)=\limom \Neo(\uk)^\lslakh$
d{\'e}finit une \pseudonorme{}
sous-multiplicative sur
$\et{\End(V)}$ compatible avec $\abslaet{\ }$ (voir proposition 
\ref{prop- algn  associe a une  pseudonorme}).
On le v{\'e}rifie sans difficult{\'e}.
\end{preuve}

\begin{defi}[C{\^o}ne asymptotique $\omq{\SLn(\KK)}$de $\SLn(\KK)$]
  Si $U\subset \End(V)$
on note $\omq{U}=\{\classEndVom{\uk}\ |\ \forall k\in\NN,\uk\in U\}$. 
Alors $\omq{\GL(V)}$ est un groupe pour la loi induite par la
composition, et $\omq{\SLn(\KK)}$ est l'un de ses sous-groupes.
\end{defi}

\paragraph{Identification de $\omq{\End(V)}$ avec $\End(\Vom)$.}

On rappelle que $\Vom=(\KKom)^n$ est muni de la bonne norme canonique
$\normoom$.  L'alg{\`e}bre $\End(\Vom)\simeq \Mn(\KKom)$ est munie de la
norme $\Neoomm=N_{\normoom}$, qui v{\'e}rifie
 \[\Neoomm((\aijom)_{ij})=\max_{ij} \absom{\aijom}\]

\begin{prop}
Soit $\uom=\classom{\uk}\in \omEndV$. 
On peut lui associer $\uomm\in\End(\Vom)$ d{\'e}fini par
$$\uomm:\ \classom{\vk} \mapsto \classom{\uk(\vk)}\;.$$ 
On note $(\aijom)_{ij}$ la matrice de $\uomm$ dans la base canonique, 
et $(\aijk)_{ij}$  la matrice de $\uk$ dans la base canonique, pour
tout $k$.
On a
\begin{enumerate}
\item 
$\Neoomm(\uomm) =  \Neoom(\uom)$.

\item $\det(\uomm) = \classKom{\det \uk}$. 

\item  $\aijom=\classKom{\aijk}$ pour
tous $i,j$.

\end{enumerate}
\end{prop}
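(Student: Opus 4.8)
The plan is to dispose of well-definedness and boundedness first, and then read off the three identities, the last two of which reduce to the fact that all operations in the asymptotic cone are inherited termwise from $\KK$. First I would check that $\uomm$ makes sense. Given representatives $(\uk)$ of $\uom$ and $(\vk)$ of a point of $\Vom$, submultiplicativity of $\Neo$ gives $\normo(\uk(\vk))\leq \Neo(\uk)\,\normo(\vk)$; raising to the power $\lslakh$ and taking the $\om$-limit yields
$$\normoom(\classom{\uk(\vk)})\leq \Neoom(\uom)\,\normoom(\classom{\vk})\,.$$
This single inequality does three things at once: it shows that $\classom{\uk(\vk)}$ is a genuine point of $\Vom$ (finite norm), that it does not depend on the chosen representatives (apply $\KK$-linearity of each $\uk$ to $\vk-\vpk$), and that $\Neoomm(\uomm)\leq \Neoom(\uom)$. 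The $\KKom$-linearity of $\uomm$ is then immediate from the termwise definitions of the operations on $\Vom$ and $\KKom$.

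For (1) it remains to prove the reverse inequality, and this is the only step where I expect to use more than formal manipulation, namely finite-dimensionality in order to realise the operator norm. For each $k$ I would pick a unit vector $\vk$ (so $\normo(\vk)=1$) with $\normo(\uk(\vk))=\Neo(\uk)$; then $\vom=\classom{\vk}$ satisfies $\normoom(\vom)=1\neq 0$ and $\normoom(\uomm(\vom))=\limom \Neo(\uk)^\lslakh=\Neoom(\uom)$, whence $\Neoomm(\uomm)\geq \Neoom(\uom)$. In particular $\uomm\neq 0$ as soon as $\uom\neq 0$, so no nonzero class is collapsed; the main (and only) real content of the statement is thus this norm equality.

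For (3) I would evaluate $\uomm$ on the canonical basis. Since each coordinate functional is bounded by $\normo$ (be it the Euclidean or the max norm), the entries satisfy $\abs{\aijk}\leq \Neo(\uk)$, so $\abs{\aijk}^\lslakh$ is bounded and $\classKom{\aijk}$ is a genuine element of $\KKom$. Writing $\uk(\eoi)=\sum_j \aijk\,\eps_j$ and pushing $\classom{\cdot}$ through termwise gives $\uomm(\eps^j_\om)=\sum_i \classKom{\aijk}\,\eps^i_\om$, that is $\aijom=\classKom{\aijk}$. Finally (2) is purely formal: the determinant is a fixed universal polynomial $P$ in the matrix entries, and because the ring structure of $\KKom$ is exactly the termwise one inherited from $\KK$, one obtains
$$\det(\uomm)=P\big((\aijom)\big)=P\big((\classKom{\aijk})\big)=\classKom{P\big((\aijk)\big)}=\classKom{\det\uk}\,.$$
Everything apart from the norm equality of (1) is therefore bookkeeping with termwise operations.
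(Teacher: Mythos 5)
Your proof is correct and follows essentially the same route as the paper's: the submultiplicativity inequality, raised to the power $\lslakh$ and passed to the $\om$-limit, gives at once well-definedness and the bound $\Neoomm(\uomm)\leq\Neoom(\uom)$; a witness vector gives the reverse bound; and items (2) and (3) are termwise bookkeeping (which the paper does not even write out). Two points deserve attention. First, your parenthetical only checks independence of the representative $(\vk)$ of the point of $\Vom$; you also need independence of the representative $(\uk)$ of $\uom$, which follows from the same inequality applied to $\uk-\upk$ (this is how the paper phrases it: if the $\Neoom$-norm of the difference vanishes, the induced endomorphism is $0$). Second, and more substantively, your reverse bound picks $\vk$ with $\normo(\vk)=1$ and $\normo(\uk(\vk))=\Neo(\uk)$, invoking finite-dimensionality. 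That compactness argument works for $\KK=\RR$ or $\CC$, but the statement also covers ultrametric $\KK$, possibly not locally compact, where the unit sphere of $V$ is not compact. Exact attainment does still hold there (for the canonical max-norm one has $\Neo(u)=\max_{ij}\abs{a_{ij}}$, realized at a standard basis vector $\eoi$), but that needs its own one-line argument rather than ``finite-dimensionality''. The paper sidesteps the issue entirely: it takes $\vk$ with $\normo(\uk(\vk))\geq \frac{1}{2}\,\Neo(\uk)\,\normo(\vk)$, which exists over any $\KK$ by the very definition of the operator norm, and notes that the constant disappears in the limit since $\left(\tfrac{1}{2}\right)^{\lslakh}\tend 1$. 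Under the $\lslakh$-power rescaling any fixed multiplicative constant washes out, so exact maximizers are never needed; this approximate-maximizer trick is the only real difference between your argument and the paper's, and your version is easily patched either way.
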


\begin{preuve} 
V{\'e}rifions tout d'abord que $\uomm$ est bien d{\'e}fini.
Notons $\uet=\classet{\uk}\in\et{\End(V)}$.
On a $\Neoom(\uet)<\pinfty$.
Notons $\uett :\ \classet{\vk} \mapsto \classet{\uk(\vk)}$ 
 l'endomorphisme de $\Vet$ associ{\'e} {\`a} $\uet$, qui est
 clairement bien d{\'e}fini.

On note $\Neoomm$ la \pseudonorme{} sur $\End(\Vet)$
associ{\'e}e {\`a} la \pseudonorme{} $\normoom$ sur $\Vet$.
Montrons qu'on a
$\Neoomm(\uett) =  \Neoom(\uet)$.
En effet, pour tout $\vet$ de $\Vet$,
\begin{equation}\label{eq- maj norme de psi(u)}
\normoom\left( {\uett(\vet)} \right) \leq \Neoom(\uet)\normoom(\vet),
\end{equation}
donc
$\Neoomm(\uett) \leq \Neoom(\uet)$.
Soit $\vk\in V$ tel que
$\normo(\vk)=1$ et  $\normo(\vk)\geq \frac{1}{2}
\Neo(\uk)\normo(\vk)$. Si $\vet=\classet{\vk}\in\Vet$ on a
$\normoom (\uett(\vet) )  \geq \Neoom(\uet)\normoom(\vet)$,
 et donc $\Neoomm(\uett) \geq \Neoom(\uet)$.

Par cons{\'e}quent $\uett$ conserve $\Vetp$ et
$\Veto$. Donc $\uett$ passe bien au quotient en un endomorphisme
de $\Vom=\Vetp/\Veto$, qui est clairement $\uomm$.
De plus, si $\Neoom(\uet)=0$, alors $\uomm=0$ dans $\End(\Vom)$ par
(\ref{eq- maj norme de psi(u)}).  Donc $\uomm$ ne d{\'e}pend que de
$\classom{\uk}$ dans $\omq{\End(V)}$.
\end{preuve}

\begin{coro}
\label{coro- isom SLn(KKom)}
\begin{enumerate}

\item L'application 
\[
\psi :\begin{array}{ccl}
\omEndV & \fleche & \End(\Vom)\\
  \uom &\mapsto & \uomm
\end{array}
\]
est un isomorphisme de $\KKom$-alg{\`e}bres norm{\'e}es.

\item Elle induit un isomorphisme de groupes 
$ \psi:\ \omq{\SLn(\KK)}\fleche \SLn(\KKom)$.
\qed
\end{enumerate}
\end{coro}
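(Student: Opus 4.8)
The plan is to read everything off from the preceding proposition, which already supplies the three facts I need: that $\psi$ is isometric, $\Neoomm(\uomm)=\Neoom(\uom)$; that it respects determinants, $\det(\uomm)=\classKom{\det\uk}$; and that it computes matrix entries, $\aijom=\classKom{\aijk}$. First I would observe that $\psi$ is a homomorphism of $\KKom$-algebras: the $\KKom$-module structure on $\omEndV$ and the ring structure of $\End(\Vom)$ are both defined by termwise operations on representative sequences, so $\psi$ is automatically additive, $\KKom$-linear, and multiplicative (it sends $\classom{\uk}\classom{\vk}=\classom{\uk\vk}$ to the composite of the corresponding endomorphisms). Injectivity is then free from the isometry property: if $\psi(\uom)=0$, then $\Neoom(\uom)=\Neoomm(\uomm)=0$, and since $\Neoom$ is a genuine norm on $\omEndV$ (Proposition \ref{prop- conas de End(V)}), this forces $\uom=0$.

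For surjectivity in part 1 I would lift entrywise. Given $w\in\End(\Vom)\simeq\Mn(\KKom)$ with matrix $(b^{ij})$, write each $b^{ij}=\classKom{b^{ij}_k}$ and let $\uk\in\End(V)$ be the matrix $(b^{ij}_k)$ in the canonical basis. Comparing the operator norm $\Neo$ with the maximum of the entries (these are equivalent up to a constant $C(n)$ whose $\lak$-th root tends to $1$, and coincide in the ultrametric case) gives $\limom\Neo(\uk)^{1/\lak}=\max_{ij}\absom{b^{ij}}=\Neoomm(w)<\pinfty$, so $\uom:=\classom{\uk}$ is a well-defined element of $\omEndV$. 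By the entry formula, $\psi(\uom)=\uomm$ has matrix $(\aijom)=(\classKom{b^{ij}_k})=(b^{ij})$, i.e. $\psi(\uom)=w$. Hence $\psi$ is a norm-preserving algebra isomorphism, which proves part 1.

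For part 2, one inclusion is immediate: if $\uom=\classom{\uk}$ with $\uk\in\SLn(\KK)$, then $\det(\uomm)=\classKom{\det\uk}=\classKom{1}=1$, so $\psi(\uom)\in\SLnKom$; injectivity is inherited from part 1. The only real difficulty is surjectivity onto $\SLnKom$: a preimage $\uom=\classom{\uk}$ of $w\in\SLnKom$ produced by part 1 satisfies merely $\classKom{\det\uk}=1$, so $t_k:=\det\uk$ is $\om$-close to $1$ but need not equal $1$, and one cannot simply rescale $\uk$ by an $n$-th root of $t_k$ since $\KK$ need not contain one. The fix, and the point where the argument has to be careful, is to correct the determinant in a single coordinate: for $\om$-almost every $k$ one has $t_k\neq 0$ (because $\classKom{t_k}=1\neq 0$ in the field $\KKom$), so setting $\uk':=\uk\,\mathrm{diag}(t_k^{-1},1,\ldots,1)$ yields $\uk'\in\SLn(\KK)$. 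Since $\classKom{t_k}=1$ forces $\classKom{t_k^{-1}}=1$, the diagonal factor $\classom{\mathrm{diag}(t_k^{-1},1,\ldots,1)}$ is a well-defined element of $\omEndV$ mapping to $\id$ under $\psi$, whence $\psi(\classom{\uk'})=\psi(\uom)\cdot\id=w$. Thus $\classom{\uk'}\in\omq{\SLn(\KK)}$ maps to $w$, giving surjectivity. The main obstacle is precisely this determinant-normalization step, and the diagonal trick circumvents the absence of roots in $\KK$.
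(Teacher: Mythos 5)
Your proof is correct and takes the route the paper intends: the corollary is stated in the paper with no proof at all (it is treated as an immediate consequence of the preceding proposition's formulas for the norm, the determinant and the matrix entries), and your argument is exactly that reading-off, made explicit. The only genuinely non-obvious point, which the paper's unproved statement glosses over, is surjectivity onto $\SLn(\KKom)$ --- an entrywise lift of $w\in\SLn(\KKom)$ has determinant only $\om$-close to $1$, not equal to $1$ --- and your correction by the factor $\mathrm{diag}(t_k^{-1},1,\ldots,1)$, valid for $\om$-almost every $k$, settles it correctly.
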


\begin{rema}
1) Soit $\Gb$ un sous-groupe alg{\'e}brique de $\SLn$ d{\'e}fini sur
 $\KK$ et $G=\Gb(\KK)$. Comme $\KK\subset\KKom$ (via les suites
constantes), on peut consid{\'e}rer le groupe $\Gb(\KKom)$, et on a alors  
 $\psi(\omq{G})\subset\Gb(\KKom)$.
2) Soit $\Ga$ un groupe de type fini, et $S$ une partie g{\'e}n{\'e}ratrice de
$\Ga$. Soit $\rok :\Ga \fleche \Gb(\KK)$ une suite de repr{\'e}sentations
telles que  $N(\rok(s))^\lslak$ est born{\'e} pour tout  $s\in S$.  
Alors la suite $\rok$ induit une repr{\'e}sentation $\room :\Ga \fleche
\Gom$ d{\'e}finie par $\room(\ga)=\classom{\rok(\ga)}$ pour tout $\ga$ de $\Ga$.
Par la remarque pr{\'e}c{\'e}dente $\psi\circ\room$ est alors une
repr{\'e}sentation de $\Ga$ dans $\Gb(\KKom)\subset \SLn(\KKom)$.  
\end{rema}

\subsection{C{\^o}ne asymptotique de $\Es$ 
et immeuble de $\SLn(\KKom)$.}
\label{ss- conas et imm de BT}

Pour $\FF=\KK$ ou $\KKom$, on rappelle qu'on note 
$\Es(\FF)$ l'espace associ{\'e} {\`a} $\SLn(\FF)$, qu'on identifie
{\`a} $\BNl(\FF^n)$
(voir section \ref{ss- normes}). 
On notera aussi $\Es=\Es(\KK)$.
On note $(\Esom,\oom,\dom)$ le c{\^o}ne asymptotique de $(\Es, \normo,
\dk = \lslak d)$.
\begin{prop} 
On peut d{\'e}finir une action naturelle  de $\omq{\SLn(\KK)}$  
sur l'immeuble $\Esom$ (par
au\-to\-mor\-phis\-mes) par
\[(\classom{\gk},\classom{\normk}) 
\mapsto\classom{\gk\cdot\normk}\;.\]
\end{prop}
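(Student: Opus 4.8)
The plan is to verify, in turn, that the formula produces a point of $\Esom$, that it is independent of the chosen representatives of $\classom{\gk}$ and of $\classom{\normk}$, and finally that it defines an action by automorphisms. Throughout I take the basepoint of $\Esom$ to be the constant sequence $\ok=\normo$, and I use repeatedly that every $\gk\in\SLn(\KK)$ acts by isometries of $\Es$ for both $d$ and $\dinf$; for $\dinf$ this is immediate from $\dinf(\gk\cdot\norm,\gk\cdot\normp)=\sup_{v\neq 0}\abs{\log\frac{\normp}{\norm}(\gk^{-1}v)}=\dinf(\norm,\normp)$.

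First I would check that $(\gk\cdot\normk)$ is an $\om$-bounded sequence in $\Es$, so that $\classom{\gk\cdot\normk}$ makes sense. Since $\gk\in\SLn(\KK)$ preserves $\BNl(V)=\Es$, each $\gk\cdot\normk$ lies in $\Es$. By the triangle inequality and the $\dinf$-isometry property, $\dinf(\normo,\gk\cdot\normk)\leq\dinf(\normo,\gk\cdot\normo)+\dinf(\normo,\normk)$, and by (\ref{eq- d neuo et N(g)}) the first term is at most $(n-1)\log\Neo(\gk)$. Multiplying by $\lslak$ and using that $\Neo(\gk)^{\lslakh}$ is $\om$-bounded (as $\classom{\gk}\in\omq{\SLn(\KK)}$) and that $\lslak\dinf(\normo,\normk)$ is $\om$-bounded (since $(\normk)$ is $\om$-bounded and $d,\dinf$ are equivalent by (\ref{eq- d et dinf})), the quantity $\lslak\dinf(\normo,\gk\cdot\normk)$ is $\om$-bounded, as required. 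Independence of the representative $(\normk)$ is immediate: if $\limom\lslak d(\normk,\normpk)=0$ then, $\gk$ being a $d$-isometry, $\lslak d(\gk\cdot\normk,\gk\cdot\normpk)=\lslak d(\normk,\normpk)\tendom 0$.

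The main obstacle is independence of the representative $(\gk)$. Suppose $\classom{\gk}=\classom{\gpk}$ in $\omq{\SLn(\KK)}$, i.e.\ $\Neo(\gk-\gpk)^{\lslakh}\tendom 0$. Applying the $\dinf$-isometry $\gk^{-1}$ and setting $\hk=\gk^{-1}\gpk\in\SLn(\KK)$, one has $\dinf(\gk\cdot\normk,\gpk\cdot\normk)=\dinf(\normk,\hk\cdot\normk)$, which Lemma~\ref{lemm- d neu, g.neu et N(g-I)} bounds by $\log(1+B_k)$ with $B_k=e^{2\dinf(\normo,\normk)}\max\{\Neo(\hk-\id),\Neo(\hk^{-1}-\id)\}$. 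Writing $\hk-\id=\gk^{-1}(\gpk-\gk)$ and $\hk^{-1}-\id=\gpk^{-1}(\gk-\gpk)$ and using submultiplicativity gives $\max\{\Neo(\hk-\id),\Neo(\hk^{-1}-\id)\}\leq\max\{\Neo(\gk^{-1}),\Neo(\gpk^{-1})\}\,\Neo(\gk-\gpk)$. Here $\Neo(\gk^{-1})^{\lslakh}$ and $\Neo(\gpk^{-1})^{\lslakh}$ are $\om$-bounded: identifying $\omq{\SLn(\KK)}$ with $\SLn(\KKom)$ via Corollary~\ref{coro- isom SLn(KKom)}, the inverse of $\classom{\gk}$ is $\classom{\gk^{-1}}$ and has finite $\Neoom$ (equivalently, use the adjugate bound $\Neo(\gk^{-1})\leq c_n\,\Neo(\gk)^{n-1}$ with $c_n^{\lslakh}\tendom 1$). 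Taking $\lslakh$-th powers, $B_k^{\lslakh}$ is a product of two $\om$-bounded factors with $\Neo(\gk-\gpk)^{\lslakh}\tendom 0$, hence $B_k^{\lslakh}\tendom 0$. Since $\lslak\log(1+B_k)=\log\big((1+B_k)^{\lslakh}\big)$ and $(1+B_k)^{\lslakh}\leq 2^{\lslakh}\max(1,B_k^{\lslakh})\tendom 1$, I conclude $\limom\lslak\dinf(\normk,\hk\cdot\normk)=0$, so by (\ref{eq- d et dinf}) $\limom\lslak d(\gk\cdot\normk,\gpk\cdot\normk)=0$ and the two images coincide in $\Esom$.

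Finally, this shows the well-defined map $\classom{\gk}\mapsto\big(\classom{\normk}\mapsto\classom{\gk\cdot\normk}\big)$ sends each $\classom{\gk}\in\omq{\SLn(\KK)}$ to the isometry $[\gk]\in\Gom$ (note $(\gk)\in\Getp$ by the first step), and the relations $\classom{\gk}\cdot\big(\classom{\gpk}\cdot\classom{\normk}\big)=\classom{(\gk\gpk)\cdot\normk}$ together with the triviality of the identity show it is a group action. Since $\Gom$ acts on $\Esom$ by automorphisms (Theorem~\ref{theo- cone as = immeuble}), so does $\omq{\SLn(\KK)}$, which completes the proof.
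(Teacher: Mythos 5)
Your proof is correct and takes essentially the same approach as the paper's: $\om$-boundedness of $(\gk\cdot\normk)$ via (\ref{eq- d neuo et N(g)}), the key estimate of Lemma \ref{lemm- d neu, g.neu et N(g-I)} for independence of the representative of $\classom{\gk}$, and the automorphism property inherited from $\Gom$ (Theorem \ref{theo- cone as = immeuble}). The only difference is presentational: you treat an arbitrary pair of representatives via $\hk=\gk^{-1}\gpk$ and spell out the submultiplicativity and inverse-boundedness estimates, whereas the paper reduces to showing that $\limom\Neo(\gk-\id)^\lslakh=0$ forces $[\gk]=\id_{\Esom}$, leaving those same estimates implicit in the group structure of $\omq{\SLn(\KK)}$.
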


\begin{preuve}
Soit $(\gk)$ une suite dans $\SLn(\KK)$.
Par la formule (\ref{eq- d neuo et N(g)}) de la section \ref{ss-
  normes} on a que $\Neo(\gk)^\lslakh$ \omborne{} {\'e}quivaut {\`a}
$\lslak d(\normo , \gk \normo)$ \omborne. 
Alors $\gpom:\classom{\normk}\mapsto \classom{\gk\cdot\normk}$ 
d{\'e}finit bien  un automorphisme
de $\Esom$ (voir section \ref{ss- prel conas}). 

Il reste {\`a} voir que  si $\limom \Neo(\gk-\id)^\lslakh=0$ alors 
$\gpom=\id_{\Esom}$.
Soit  alors $\normom=\classom{\normk}$ dans $\Esom$. 
D'apr{\`e}s le lemme \ref{lemm- d neu, g.neu et N(g-I)}, on a
$$\lslak\dinf(\normk,\gk\normk)\leq \log\left(1+ e^{{2\over
    \lak}\dinf(\normo, \normk)} {\scriptstyle \max
  \left\{\Neo(\gk-\id)^\lslak, \Neo(\gk^{-1}-\id)^\lslak \right\}
}\right)$$
Or $\lslak \dinf(\normo, \normk)$ est
\omborne{} et, comme $\limom\Neo(\gk-\id)^\lslak = 0$, on a {\'e}galement que
$\limom\Neo(\gk^{-1}-\id)^\lslak = 0$.
Donc $\lslak
\dinf(\normk, \gk\normk)\tendom 0$, et $\gpom\normom =\normom$.
\end{preuve}

On note $\psi: \omq{\SLn(\KK)} \fleche \SLn(\KKom)$ 
l'isomorphisme 
du corollaire \ref{coro- isom SLn(KKom)}.

\begin{theo} \label{theo- modele alg Esom}
L'application 
$$\phi :\begin{array}{ccl}
\Esom(\KK) & \longrightarrow & \EsKom\\
\classom{\normk} &\mapsto &\limom\normk^\lslakh
\end{array}$$
d{\'e}finit un isomorphisme d'immeubles affines, $\psi$-{\'e}quivariant.
\end{theo}

\begin{preuve} 
Montrons tout d'abord que $\phi$ est bien d{\'e}finie.
Pour toute suite $\normet=\classet{\normk}$ dans $\Eset$
on peut d{\'e}finir $\normett:\Vet\mapsto[0,\pinfty]$ par
$\normett(\classet{\uk})=\limom \normk(\uk)^\lslakh$.
On voit facilement que $\normett$ est une \pseudonorme{} ultram{\'e}trique
sur $\Vet$.

Soient $\normet=\classet{\normk}$ et $\normpet=\classet{\normpk}$ dans
$\Eset$.
On consid{\`e}re la \pseudodistance{} 
$\dinfom$ sur $\Eset$ d{\'e}finie par
$\dinfom(\normet,\normpet)=\limom \lslak \dinf(\normk,\normpk)$. 
Elle est {\'e}quivalente {\`a} $\dom$ par la  formule (\ref{eq- d et dinf}).
On suppose que $\dinfom(\normet,\normpet)<\pinfty$.
Soit $\vet=\classet{\vk}\in\Vet$. Pour tout $k$, on a dans $\RR$ 
$$0\leq e^{-\dinf(\normk,\normpk)} \normpk(\vk) \leq \normk(\vk)
\leq e^{\dinf(\normk,\normpk)} \normpk(\vk)$$
Par cons{\'e}quent, on a dans $\RRet$ 
$$0\leq \classet{e^{-\dinf(\normk,\normpk)}}\ \classet{ \normpk(\vk)} \leq
\classet{\normk(\vk)} \leq \classet{e^{\dinf(\normk,\normpk)}}\ \classet{
\normpk(\vk)}$$
et en prenant la \pseudova{} $\abslaet{\ }$ de chaque
terme, on obtient dans $[0,\pinfty]$
\begin{equation}\label{eq- normes equiv}
 0\leq e^{-\dinfom(\normet,\normpet)} \normpett(\vet) \leq \normett(\vet)
\leq e^{\dinfom(\normet,\normpet)} \normpett(\vet)
\; . 
\end{equation}

Supposons maintenant que $\dinfom(\normet,\classet{\normo})<\pinfty$.
En appliquant (\ref{eq- normes equiv}) {\`a} $\normpet=\normoet$,
on voit que $\normett(v)<\pinfty$ 
si et seulement si $\normoett(v)<\pinfty$ 
et  que $\normett(v)=0$  si et seulement si
$\normoett(v)=0$. 
Donc $\normett$ passe bien au quotient en une norme
ultram{\'e}trique  sur $\Vom$, qu'on notera {\'e}galement $\normomm$.
La formule (\ref{eq- normes equiv}) implique {\'e}galement que si
$\dinfom(\normet,\normpet)=0$, alors $\normomm=\normpomm$.
 Par cons{\'e}quent $\normomm$ ne d{\'e}pend
que du point $\normom= \classom{\normk}$ de $\Esom$ d{\'e}fini par 
$\normet$. 

On a donc v{\'e}rifi{\'e} que l'application
$\classom{\normk}\mapsto\limom\normk^\lslakh$ est
 bien d{\'e}finie de $\Esom$ dans l'espace $\Normes(\Vom)$ des normes
ultram{\'e}triques sur $\Vom$.
Il est clair que $\phi$ est $\psi$-{\'e}quivariante.
Comme 
$\omq{\SLn(\KK)}$ agit transitivement sur les appartements marqu{\'e}s de
$\Esom$, 
et $\SLn(\KKom)$ agit transitivement sur les
appartements marqu{\'e}s de $\EsKom$,
il suffit maintenant de montrer que, si $\omfeo:\Aa\fleche\Esom$ est l'appartement
marqu{\'e} canonique $\ulim{f_\eo}$,
on a $\phi\circ\omfeo = \feoom$.

Soit $\af=(\afln)\in\Aa$. 
Notons $\normk=f_\eo(\lak\af)$.
Soit $\aom=(\alnom)$ dans $\KKom^n$ avec 
 $\aiom=\classKom{\aik}$.
Dans le cas o{\`u} $\KK=\RR$ ou $\CC$,  on a 
\begin{eqnarray*} 
\phi(\omfeo(\af))(\aom) 
&=& \absom{\classRom{ \normk(\alnk)}}
= \absom{\classRom{ \sqrt{\psum_{i=1}^n e^{\lak\afi}\abs{\aik}^2}\;}} 
\\
&=& \absom{\sqrt{\psum_{i=1}^n \classom{e^{\lak\afi}}
\ \classom{\abs{\aik}}^2}\;}
 =\max\limits_\ilan e^{\afi}\absom{\aiom}
=\feoom(\alpha)(\aom)
\end{eqnarray*} 
d'apr{\`e}s (\ref{eq- supp}).
Le cas o{\`u} $\KK$ est ultram{\'e}trique est clair.
\end{preuve}

\section{Vecteur de translation}
\label{s- VdT}

Dans cette section, on suppose que $\Es$ est un espace sym{\'e}trique
ou un immeuble affine complet (voir   section \ref{ss- prel ES IMM},
dont on reprend les hypoth{\`e}ses et notations).
On introduit le {\em  vecteur de translation} $v(g)$ 
d'un automorphisme 
$g$ de $\Es$ de mani{\`e}re g{\'e}o\-m{\'e}\-tri\-que, comme un raffinement 
de la  longueur de translation $\ell(g)$.
On {\'e}tablit ensuite des propri{\'e}t{\'e}s de continuit{\'e} de $v$, 
la continuit{\'e}  asymptotique (Proposition \ref{prop-
  passage au cone}).

\subsection{Vecteur de translation d'une isom{\'e}trie}

Soit $g\in\Aut(\Es)$. 
Si $\Min(g)\neq\emptyset$, ce qui
est toujours le cas si $\Es$ est un immeuble 
(th{\'e}or{\`e}me \ref{theo- immob}),
le type $\Cd_g(x)=\Cd(x,gx)$ du segment joignant $x\in\Min(g)$ {\`a} $gx$
est constant (car deux g{\'e}od{\'e}siques translat{\'e}es par $g$ sont parall{\`e}les
donc dans un m{\^e}me plat maximal).
Le {\em vecteur de translation} de $g$ est alors par d{\'e}finition 
$v(g)=\Cd(x,gx)$ pour un (tout) point $x\in\Min(g)$. 
On peut le d{\'e}finir plus g{\'e}n{\'e}ralement pour tout $g$ gr{\^a}ce au
r{\'e}sultat suivant.

\begin{prop} 
\label{prop- def vdt}
Soit $\Es$  un espace sym{\'e}trique et $g\in\Aut(\Es)$.  
L'adh{\'e}rence dans $\Cb$ de $\{\Cd(x, gx),\ x\in \Es\}$ contient un
unique segment de longueur minimale, qu'on note $v(g)$, et qu'on
appelle {\em vecteur de translation} de $g$.  
\end{prop}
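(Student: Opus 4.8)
The plan is to show that the minimal length attained by the vectors in the closure of the set $\{\Cd(x,gx),\ x\in\Es\}$ equals $\ell(g)$, and that it is attained at a single point, namely the translation vector $v(h)$ of the hyperbolic part $h$ of $g$. Write the Jordan decomposition $g=heu$, set $\phi=eu$ and $\Fg=\Min(h)$. Since $h$ is hyperbolic, hence semisimple, $v(h):=\Cd(y,hy)$ is well defined and independent of $y\in\Min(h)$, with $\norme{v(h)}=d_h(y)=\ell(h)$. Because $\norme{\Cd(x,gx)}=d(x,gx)=d_g(x)\geq\ell(g)$ for every $x$, every element of the closure has norm at least $\ell(g)$, so the minimal length is $\geq\ell(g)$; and $\ell(h)=\ell(g)$ (trivially if $\ell(g)=0$, since then $h=\id$, and by Proposition \ref{prop- prel ES - l(g)=l(h)} if $\ell(g)>0$), so $\norme{v(h)}=\ell(g)$.

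The heart of the argument is the following convergence claim: for any sequence $(\xk)$ in $\Es$ with $d_g(\xk)\tend\ell(g)$, one has $\Cd(\xk,g\xk)\tend v(h)$. To prove it, let $\yk$ be the projection of $\xk$ onto $\Fg=\Min(h)$, so that $d(\xk,\yk)=d(\xk,\Min(h))$. Lemma \ref{lemm- d_g et distance a F_g} then yields both $d(\xk,\yk)\tend 0$ and $d(g\yk,h\yk)\tend 0$ as $d_g(\xk)\tend\ell(g)$. Applying the $1$-Lipschitz estimate of Proposition \ref{prop- Cd 1-Lip} twice gives $\norme{\Cd(\xk,g\xk)-\Cd(\yk,g\yk)}\leq 2\,d(\xk,\yk)\tend 0$ and $\norme{\Cd(\yk,g\yk)-\Cd(\yk,h\yk)}\leq d(g\yk,h\yk)\tend 0$; since $\Cd(\yk,h\yk)=v(h)$, the claim follows.

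Existence and uniqueness then both drop out. Picking $(\xk)$ with $d_g(\xk)\tend\ell(g)$, which is possible as $\ell(g)=\inf_x d_g(x)$, the claim shows that $v(h)$ lies in the closure; together with the lower bound above this proves that $\ell(g)$ is the minimal length and that it is attained at $v(h)$. Conversely, if $w$ is any vector of the closure with $\norme{w}=\ell(g)$, write $w=\lim\Cd(\xk,g\xk)$; then $d_g(\xk)=\norme{\Cd(\xk,g\xk)}\tend\ell(g)$, so the claim forces $w=v(h)$. Hence $v(h)$ is the unique vector of minimal length in the closure, and one sets $v(g)=v(h)$.

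The step I expect to be the main obstacle is the reduction to $\Min(h)$ carried by Lemma \ref{lemm- d_g et distance a F_g}: controlling both the distance from a near-minimizer $\xk$ to $\Min(h)$ and the defect $d(g\yk,h\yk)$ solely in terms of how close $d_g(\xk)$ is to $\ell(g)$. This is precisely what lets one transport near-minimizing sequences onto $\Min(h)$, where $g$ and the transvection $h$ agree up to a vanishing displacement; the remainder reduces to the continuity of $\Cd$ and requires no compactness argument.
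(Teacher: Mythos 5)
Your proof is correct and follows essentially the same route as the paper's: Jordan decomposition $g=heu$, Lemma \ref{lemm- d_g et distance a F_g} to push near-minimizers onto $\Min(h)$, and the $1$-Lipschitz property of $\Cd$ (Proposition \ref{prop- Cd 1-Lip}) applied twice to conclude that all near-minimizing values $\Cd_g(x)$ cluster at the constant vector $\Cd_h(y)=v(h)$. The only difference is presentational: you spell out the existence step and the identification $v(g)=v(h)$ (including the case $\ell(g)=0$ via $h=\id$), which the paper compresses into ``l'existence est claire'' and its subsequent remark.
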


\begin{proof}
On a  $\inf_{x\in\Es}\norme{\Cd(x,gx)}=\ell(g)$.
L'existence est claire, montrons l'unicit{\'e}.
Soit $g=heu$ la d{\'e}composition de Jordan de $g$ et $\eps>0$. Soit
$\eta$ donn{\'e} par le lemme \ref{lemm- d_g et distance a F_g}.
Si $x\in\Es$ est tel que $d_g(x) \leq \ell(g) + \eta$
et si $y$ est
la projection  de $x$ sur $\Min(h)$, alors  la
proposition \ref{prop- Cd 1-Lip} entra{\^\i}ne que
$\norme{\Cd_g(x)-\Cd_g(y)} \leq 2d(x,y)\leq 2\eps$ et
$\norme{\Cd_g(y)-\Cd_h(y)} \leq d(gy, hy) \leq \eps$.
On en d{\'e}duit que
$\norme{\Cd_g(x)-\Cd_h(y)} \leq 3\eps$,
ce qui conclut car $\Cd_h(y)$ ne d{\'e}pend pas de $y\in\Min(h)$.
\qed\end{proof}

\begin{remas*}
  \begin{enumerate}
  \item 
  La notion de vecteur de translation est plus fine que celle de
longueur de translation (car $\ell(g)=\norme{v(g)}$). 

\item  $v$ est invariant par conjugaison, et pour tout $g$, on a $v(g^k)=kv(g)$ pour tout $k\in\NN$.
\item Dans le cas des espaces sym{\'e}triques, 
on a $v(g)=v(h)$ o{\`u} $h$ est la partie hyperbolique dans la
  d{\'e}composition de Jordan de $g$ (d'apr{\`e}s la preuve de la proposition
\ref{prop- def vdt}).
Cet invariant co{\"\i}ncide donc avec la {\em projection de Jordan}
  de $g$ (i.e. la projection de $h$ sur $\Cb$, voir par exemple 
\cite[1.1 et 2.4]{Benoist}).
\end{enumerate}
\end{remas*}

\begin{exem*}
Dans le cas o{\`u} $\Es$ est l'espace sym{\'e}trique associ{\'e} {\`a} $G=\SLn(\RR)$, on peut
choisir \(\Cb=\{
(\alpha_1,\ldots,\alpha_n) \in \RR^n\sum_{i=1}^n \alpha_i = 0,\ \alpha_1 \geq \ldots \geq
\alpha_n\}\) et pour $g\in G$
de valeurs propres $\la_1(g)$, \ldots, $\la_n(g)$ on a alors
\[v(g)=(\log\abs{\la_1(g)},\ldots,\log\abs{\la_n(g)})\]
\end{exem*}

\paragraph{Propri{\'e}t{\'e}s fonctorielles.}
Si $\Es$ est euclidien, alors  on a $\Cb=\Aa=\Es$, et 
tout $g\in\Aut(\Es)$ est une translation de vecteur $v(g)$.

Supposons que $\Es=\Es_1\times \Es_2$ est le produit de deux espaces
sym{\'e}triques ou immeubles affines $\Es_i$. 
Soit $G_i=\Aut(\Es_i)$.
Soient $\Aa_1$, $\Aa_2$ les plats maximaux
de $\Es_1$ et $\Es_2$ tels que $\Aa=\Aa_1\oplus \Aa_2$,   
et  $\Cb_1$ et $\Cb_2$ les chambres de Weyl ferm{\'e}es de $\Aa_1$ et
$\Aa_2$ telles que $\Cb=\Cb_1+\Cb_2$. 
Soit $v_i:G_i\fleche\Cb_i$ la fonction vecteur de translation de
$\Es_i$.
 Alors   tout automorphisme $g$ de $\Es$ est de la
forme $(g_1,g_2)$ o{\`u} $g_i\in G_i$, et, d'apr{\`e}s la section \ref{ss- type - produit, sous-espace}, on a 
\begin{equation}
\label{eq- vdt du produit}
v(g) =v_1 (g_1)+v_2 (g_2).
\end{equation}

Soit $\Es'$  un sous-espace sym{\'e}trique ou un sous-immeuble de $\Es$.
Soit $G'=\Aut(\Es')$.
Soit  $\Cb'$ une chambre de Weyl
ferm{\'e}e de $\Es'$.
Soit $v_{\Es'}: G' \fleche\Cb'$ la fonction
vecteur de translation de $\Es'$.
Si $g\in\Aut(\Es)$ pr{\'e}serve $\Es'$ et $g'=g_{|\Es'}$ est un
automorphisme de $\Es'$, alors d'apr{\`e}s la section \ref{ss- type -
produit, sous-espace} on a
\begin{equation}
\label{eq- vdt d'un sous-esp}
v(g) =\type(v_{\Es'}(g'))
\end{equation}

\begin{prop} 
\label{prop v continue}
La fonction $v:G\fleche\Cb$ est continue.  
\end{prop}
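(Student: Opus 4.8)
The plan is to prove sequential continuity: if $\gk\to g$ in $G$, then $v(\gk)\to v(g)$, with the affine-building and symmetric-space cases sharing the same skeleton. First I would reduce to subsequences. Since $\norme{v(g_k)}=\lgk$ and $\limsup_k \lgk\le \ell(g)$ by the semicontinuity in Lemma~\ref{lemm- Min(gk) reste a distance bornee}, the vectors $v(\gk)$ stay in a fixed ball of the finite-dimensional closed chamber $\Cb$; it is therefore enough to show that along any subsequence with $v(\gk)\to w$ one has $w=v(g)$. The case $\ell(g)=0$ is immediate, since then $v(g)=0$ and $\norme{v(\gk)}=\lgk\to 0$, so I may assume $\ell(g)>0$.

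The mechanism I would exploit is that $v(g)$ is the limit of $\Cd_g(x_k)$ along \emph{any} sequence minimizing the displacement $d_g$: this is exactly the content of the proof of Proposition~\ref{prop- def vdt}, which via Lemma~\ref{lemm- d_g et distance a F_g} projects a near-minimizer onto $\Fg=\Min(h)$, controls the resulting discrepancy by $d(gy,hy)$, and uses that $\Cd_h$ is constant equal to $v(g)$ on $\Min(h)$ (parallel $h$-axes lie in a common maximal flat). Hence the whole problem reduces to producing, for each $k$, a point $\xk$ that is bounded (stays at bounded distance from a fixed $\xo$) and satisfies $\norme{\Cd_{\gk}(\xk)-v(\gk)}\to 0$. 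Boundedness lets me invoke the uniform convergence $\Cd_{\gk}\to\Cd_g$ on bounded sets (Proposition~\ref{prop- type_gk cv vers type_g}), giving $w=\lim_k\Cd_g(\xk)$; then $\norme{w}=\lim_k d_g(\xk)\ge\ell(g)$, which together with $\norme{w}\le\ell(g)$ forces $d_g(\xk)\to\ell(g)$, so $(\xk)$ minimizes $d_g$ and $\Cd_g(\xk)\to v(g)$, i.e. $w=v(g)$.

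In the building case such $\xk$ come for free: by Theorem~\ref{theo- immob} every isometry is semisimple and $d(\xo,\Min(\gk))\le K\,\dgk(\xo)$, so the projection $\xk$ of $\xo$ on $\Min(\gk)$ gives $v(\gk)=\Cd_{\gk}(\xk)$ with $d(\xo,\xk)\le K\,\dgk(\xo)\to K\,d_g(\xo)$ bounded. In the symmetric-space case I would use the Jordan decomposition $\gk=\hk e_k u_k$: on $\Min(\hk)$ Proposition~\ref{prop- d_g(x) si x dans Min(h)} gives $\dgk(y)^2=\lgk^2+d(\gk y,\hk y)^2$, so choosing $\xk\in\Min(\hk)$ with $\dgk(\xk)\le\lgk+1/k$ yields $d(\gk\xk,\hk\xk)\to 0$, and since $\Cd_{\hk}\equiv v(\hk)=v(\gk)$ on $\Min(\hk)$, Proposition~\ref{prop- Cd 1-Lip} gives $\norme{\Cd_{\gk}(\xk)-v(\gk)}\le d(\gk\xk,\hk\xk)\to 0$. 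The functorial formulas (\ref{eq- vdt du produit}) and (\ref{eq- vdt d'un sous-esp}) are available to split off the Euclidean factor if convenient, but are not essential.

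The main obstacle is the boundedness of these $\xk$ in the symmetric-space case, that is, controlling the location of the minimal sets $\Min(\hk)$ of the hyperbolic parts. One cannot extract a uniform control directly from Lemma~\ref{lemm- d_g et distance a F_g} applied to the varying $\gk$, because its modulus $\eta(\eps)$ depends on the isometry and degenerates as $\gk\to g$ (the transverse holonomy of the near-minimal directions may shrink). The resolution I would use exploits properness of the symmetric space together with the final clause of Lemma~\ref{lemm- Min(gk) reste a distance bornee}: assuming $\ell(g)>0$, one argues by contradiction that $d(\xo,\Min(\hk))$ stays bounded, for otherwise a minimizing sequence for $d_{\gk}$ could be kept in a compact region by pointed Hausdorff compactness and would accumulate, by Lemma~\ref{lemm- Min(gk) reste a distance bornee}, onto a subset of $\Min(g)$, forcing $\lim_k\lgk=\ell(g)$ while contradicting the $\Fg=\Cr\times r$ structure of Proposition~\ref{prop- d_g(x) si x dans Min(h)}. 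Once the $\xk$ are bounded, the reduction of the second paragraph closes the argument, and continuity of $\ell=\norme{v}$ comes out as a by-product.
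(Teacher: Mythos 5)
Your reduction in the second paragraph is internally sound, and your treatment of the case $\ell(g)=0$ and of the building case coincides with the paper's (Theorem \ref{theo- immob} plus Lemma \ref{lemm- Min(gk) reste a distance bornee} and Proposition \ref{prop- type_gk cv vers type_g}). The gap is in the symmetric-space case with $\ell(g)>0$: the bounded points $\xk$ your scheme requires need not exist at all, so no argument can supply them. Take $\Es=\HH^2\times\HH^2$, $g=(a,p)$ with $a$ axial and $p$ parabolique, so that $\ell(g)=\ell(a)>0$, and $\gk=(a,p_k)$ with
$p_k=\left(\begin{array}{cc}e^{1/k}&1\\0&e^{-1/k}\end{array}\right)
\longrightarrow
p=\left(\begin{array}{cc}1&1\\0&1\end{array}\right)$
in $\PSL_2(\RR)$: each $p_k$ is axial with $\ell(p_k)=2/k\to 0$, but its axis (endpoints $\infty$ and $\approx -k/2$) leaves every bounded set. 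Since the model flat of the product splits orthogonally as $\Aa_1\oplus\Aa_2$ and $\Cd$ and $v$ split componentwise, every bounded sequence $\xk=(x_1^k,x_2^k)$ satisfies
\[
\norme{\Cd_{\gk}(\xk)-v(\gk)}\;\geq\; d(x_2^k,p_kx_2^k)-\ell(p_k)\;,
\]
and $d(x_2^k,p_kx_2^k)$ stays bounded below by a positive constant: $d_p>0$ everywhere because $p$ is parabolic, hence $d_p\geq c>0$ on any bounded set, and $d_{p_k}\to d_p$ uniformly on bounded sets (Lemma \ref{lemm- Min(gk) reste a distance bornee}). So $\norme{\Cd_{\gk}(\xk)-v(\gk)}\not\to 0$ for \emph{every} bounded choice of $\xk$, even though $v(\gk)\to v(g)$ holds (as it must). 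The same example refutes the claim on which your resolution of the ``main obstacle'' rests: here each $\gk$ is semisimple, $\Min(\hk)=\Min(\gk)$ is the product of the axis of $a$ by the axis of $p_k$, and $d(\xo,\Min(\hk))\to\infty$. Boundedness of the minimal sets of the hyperbolic parts is simply false whenever the limit $g$ has a nontrivial unipotent part; correspondingly, the contradiction you sketch cannot be run, since Lemma \ref{lemm- Min(gk) reste a distance bornee} only yields information about sequences $\xk\in\Min(\gk)$ that stay bounded, which is exactly what is lacking (and in general $\Min(\gk)$ may even be empty, $\gk$ being possibly non semisimple).

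This failure is precisely why the paper's proof abandons bounded minimizers in the symmetric-space case and works at infinity instead (Lemma \ref{lemm- continuite axes faible}). From the Jordan decomposition it extracts the attracting and repelling fixed points $\xipp,\xim$ of $g$ in $\bordinf\Es$ (Proposition \ref{prop- d_g(x) si x dans Min(h)}); the contraction of shadows by $g$ (Proposition \ref{prop- dynamique}) together with Brouwer's fixed-point theorem on the compact orbit $G\xipp$ produces fixed points $\xippk,\ximk$ of a fixed power $\gk^N$ with $\xippk\to\xipp$ and $\ximk\to\xim$; Proposition \ref{prop- points visuellement presque opposes} then gives $\Fk=F_{\ximk\xippk}\to F=\Fximp$, and on $\Fk=\Ck\times\RR$ one writes $\gk=(\gpk,\tk)$ with $\ell(\gpk)\to 0$ and $\tk\to\ell(g)$; the conclusion $v(\gk)\to v(g)$ then falls out of the functoriality formulas (\ref{eq- vdt du produit}) and (\ref{eq- vdt d'un sous-esp}) and the continuity of $\type$. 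In the example above, this machinery compares $\gk$ and $g$ inside parallel sets of geodesics pointing to the fixed point at infinity of $a$, where the escaping axes of $p_k$ do no harm: only $\ell(\gpk)\to 0$ is needed for the transverse part, not a bounded minimizer. Any repair of your argument would require an input of this kind; the displacement-minimizing framework alone cannot close the symmetric-space case.
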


\begin{remas*}
  \begin{enumerate}
\item 
Ceci entra{\^\i}ne que, lorsque $\Es$ n'a pas de facteur euclidien, la
longueur de translation est continue sur $\Isom(\Es)_0$ .  Cette
propri{\'e}t{\'e} n'est pas vraie pour tous les espaces m{\'e}triques
$\CAT(0)$. Par exemple, dans le plan euclidien, les translations non
nulles sont limites de rotations.

\item Pour $G=\SLn(\RR)$, il s'agit de voir que la liste des modules
  des valeurs propres, en ordre d{\'e}croissant, d{\'e}pend
  contin{\^u}ment de la matrice, ce qui est bien connu.
  \end{enumerate}
\end{remas*}

Nous allons donner maintenant une d{\'e}monstration purement
g{\'e}o\-m{\'e}\-tri\-que de la continuit{\'e} de $v$, 
reposant sur la dynamique des isom{\'e}tries sur $\bordinf\Es$.

\begin{proof} %
Soit $\gk\tend  g$ dans $G$. 
Montrons que quitte {\`a} extraire $v(\gk)\tend v(g)$.

Si $\ell(g)=0$, alors, par semi-continuit{\'e} sup{\'e}rieure de $\ell$, on a
$\ell(\gk)\tend 0$ et donc $v(\gk)\fleche 0$, ce qui conclut.

Dans le cas o{\`u}  $d(\Min (\gk),\xo)$ reste born{\'e}, 
ce qui est toujours le cas quand $\Es$ est un
immeuble affine  (th{\'e}or{\`e}me \ref{theo- immob}), 
cela d{\'e}coule ais{\'e}ment du lemme \ref{lemm- Min(gk) reste a distance
  bornee} et de la proposition \ref{prop- type_gk cv vers type_g}.

On peut donc supposer que $\Es$ est un espace sym{\'e}trique et que
$\ell(g)>0$. Cela d{\'e}coule alors du r{\'e}sultat plus fort suivant.

\begin{lemm}
\label{lemm- continuite axes faible}
Soit $\Es$ un espace sym{\'e}trique et $\gk\tend g$ dans $\Aut(\Es)$ avec $\ell(g)>0$.
On note $\xipp$ et $\xim$ les points fixes attractif et r{\'e}pulsif de
$g$ dans $\bordinf\Es$ 
(cf. remarque \ref{rema- pf attractif ES et IMM}) et $F=\Fximp$.
Alors pour $k$ suffisamment grand il existe $\xippk$ et $\ximk$
oppos{\'e}s dans $\bordinf\Es$, de m{\^e}me types que $\xipp$ et $\xim$, tels
que

\begin{enumerate}
\item 
\label{item- xippk tend xipp et de meme type}
$\xippk\tend\xipp$ et $\ximk\tend \xim$.

\item 
\label{item- Fk tend Fg} 
En notant $\Fk=F_{\ximk\xippk}$,
  on a $\Fk \tend F$ pour la topologie de Hausdorff point{\'e}e.

\item
\label{item- decomposition de gk}
$\gk$ fixe $\xippk$ et $\ximk$, et, si $\gk=(\gpk, \tk)$  sur $\Fk=\Ck\times
\RR$, on a $\ell(\gpk)\tend 0$ et $\tk\tend \ell(g)$.

\item 
\label{item- vgk tend vg}
$\vgk\tend v(g)$.

\item 
\label{item- xippgk et xippk}
En notant
  $\xippgk$ le point fixe attractif de
$\gk$ dans $\bordinf\Es$, on a
 $\angleTits(\xippk,\xippgk)\tend 0$, $\xippgk\in\bordinf\Fk$ et
$\xippk$ est l'unique point de
de m{\^e}me type que $\xipp$ dans l'adh{\'e}rence de la facette ouverte de
  $\bordinf\Es$ contenant $\xippgk$.

\end{enumerate}
En particulier, si $v(g)$ est r{\'e}gulier alors 
$\Min(\gk) \tend\Min(g)$.
\end{lemm}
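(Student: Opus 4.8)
La stratégie est de construire d'abord, pour $k$ grand, deux points $\xippk,\ximk$ opposés fixés par $\gk$, de types respectifs $\untype=\btype(\xipp)$ et $\untype^\opp$, tendant vers $\xipp$ et $\xim$ : une fois ceci acquis, les autres assertions s'en déduisent par des arguments de convergence. On se place, comme le permet la preuve de la proposition \ref{prop v continue}, dans le cas $\Es$ symétrique, $\ell(g)>0$, et on pose $F=\Fximp=C\times\RR$, $g=(\phi,\ell(g))$ avec $\ell(\phi)=0$ (proposition \ref{prop- d_g(x) si x dans Min(h)}), $r$ un axe de la partie hyperbolique et $\xo=r(0)$.

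Premièrement — c'est le c\oe ur de la preuve —, je construirais $\xippk$ en exploitant la dynamique attractive de $g$ en $\xipp$. D'après la proposition \ref{prop- dynamique}, $\xipp$ est attractif, et d'après la proposition \ref{prop- ombres forment base de vois} les ombres $\OmbxyD\xim{r(t)}D$ forment une base de voisinages de $\xipp$ dans la strate $(\bordinf\Es)_\untype$. Comme $\ell(\phi)=0$, la dérive dans $C$ est sous-linéaire, de sorte qu'une itérée $g^N$ ($N$ grand) envoie l'adhérence $W$ d'une telle ombre dans son intérieur. La strate $(\bordinf\Es)_\untype$ étant compacte et $\gk^N\to g^N$ uniformément, on a encore $\gk^N(W)\subset\interieur W$ pour $k$ grand ; un argument de point fixe (Brouwer sur la cellule $W$, puis passage de $\gk^N$ à $\gk$ à l'aide de la commutation et de $\gk\xipp\to\xipp$) fournit un point $\xippk\in W$ fixé par $\gk$, nécessairement de type $\untype$. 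Symétriquement, en appliquant ceci à $g^{-1}$ et $\xim$, on obtient $\ximk$ de type $\untype^\opp$. En rétrécissant $W$ on a $\xippk\to\xipp$ et $\ximk\to\xim$ ; comme alors $\angle_\xo(\ximk,\xippk)\to\angle_\xo(\xim,\xipp)=\pi$, la proposition \ref{prop- points visuellement presque opposes} assure que $\xippk$ et $\ximk$ sont opposés et que $d(\xo,\Fk)\to0$, où $\Fk=F_{\ximk\xippk}$. Ceci établit \ref{item- xippk tend xipp et de meme type}. L'obstacle principal est précisément cette construction de points fixes \emph{honnêtes} de $\gk$ à partir de la seule dynamique attractive : le passage de $g^N$ à $\gk$ et la structure cellulaire de $W$ lorsque $v(g)$ n'est pas régulier en sont les points délicats.

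Deuxièmement, $\gk$ fixant les points opposés $\ximk,\xippk$ stabilise le faisceau $\Fk=\Ck\times\RR$ (il y envoie les géodésiques sur des parallèles) et s'y écrit $\gk=(\gpk,\tk)$. De $d(\xo,\Fk)\to0$ et de la convexité des faisceaux on tire $\Fk\to F$ pour la topologie de Hausdorff pointée, d'où \ref{item- Fk tend Fg}. Pour contrôler $\tk$, j'utiliserais les fonctions de Busemann : $\tk=b_{\xippk}(\xo)-b_{\xippk}(\gk\xo)$, et par continuité conjointe de la fonction de Busemann en le point à l'infini et le point base (\cite{BrHa}), comme $\xippk\to\xipp$ et $\gk\xo\to g\xo$, on obtient $\tk\to b_{\xipp}(\xo)-b_{\xipp}(g\xo)=\ell(g)$. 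La formule (\ref{eq- faisceau stable et ell(g)}) donne $\ell(\gk)=\sqrt{\ell(\gpk)^2+\tk^2}\geq\tk$, donc $\liminf\ell(\gk)\geq\ell(g)$ ; joint à la semi-continuité supérieure $\limsup\ell(\gk)\leq\ell(g)$ (lemme \ref{lemm- Min(gk) reste a distance bornee}), ceci donne $\ell(\gk)\to\ell(g)$, puis $\ell(\gpk)\to0$. C'est ici que se résout la semi-continuité inférieure de $\ell$, fausse en général pour les espaces $\CAT(0)$, et l'on obtient en particulier $\ell(\gk)>0$ pour $k$ grand. Ceci établit \ref{item- decomposition de gk}.

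Enfin, $\norme{v(\gk)}=\ell(\gk)\to\ell(g)=\norme{v(g)}$, et la direction de $v(\gk)$, égale à $\btype(\xippgk)$, tend vers $\untype=\btype(\xipp)$ (voir ci-dessous) ; comme $v(g)$ est le vecteur de type $\untype$ et de norme $\ell(g)$, on conclut $v(\gk)\to v(g)$, soit \ref{item- vgk tend vg}. Pour \ref{item- xippgk et xippk}, $\ell(\gk)>0$ assure l'existence du point attractif $\xippgk$ (proposition \ref{prop- d_g(x) si x dans Min(h)}), et la proposition \ref{prop- faisceau stable et point attractif} appliquée à $\gk$ et au couple opposé $\ximk,\xippk$ donne $\xippgk\in\bordinf\Fk$ et $\tan\angleTits(\xippgk,\xippk)\leq\ell(\gpk)/\tk\to0$. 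Ainsi $\angleTits(\xippgk,\xippk)\to0$, d'où la convergence des types annoncée ; la structure simpliciale du bord de Tits force alors, pour $k$ grand, $\xippk$ à être l'unique point de type $\untype$ dans l'adhérence de la facette ouverte contenant $\xippgk$. Dans le cas où $v(g)$ est régulier, $\untype$ est régulier, $\Fk$ est un plat maximal sur lequel $\gk$ agit par translation, donc $\Min(\gk)=\Fk$, et $\Fk\to F$ donne $\Min(\gk)\to\Min(g)$.
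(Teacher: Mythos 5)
Your proposal follows the same overall architecture as the paper's proof — contraction of shadows (propositions \ref{prop- dynamique} and \ref{prop- ombres forment base de vois}), a Brouwer fixed-point argument in the stratum $(\bordinf\Es)_\untype$, convergence of the bands via proposition \ref{prop- points visuellement presque opposes}, and proposition \ref{prop- faisceau stable et point attractif} for point \ref{item- xippgk et xippk} — but it has a genuine gap at exactly the step you yourself flag as delicate: producing points fixed by $\gk$ itself rather than by a power $\gk^N$. Brouwer applied to $\gk^N$ on a closed topological ball $B$ (note that one must take a ball $B\subset\Ombm{p}{D}$ with $\xipp\in\interieur{B}$, which exists because $G\xipp$ is a manifold; the closure of a shadow need not be a cell) only yields a point of $B$ fixed by $\gk^N$. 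Your appeal to ``commutation et $\gk\xipp\to\xipp$'' does not repair this: $\gk$ permutes the global fixed-point set $\Fix(\gk^N)$, but it need not preserve $\Fix(\gk^N)\cap B$, and a self-map of that compact set would in any case have no reason to admit a fixed point. Since everything in your second paragraph (stability of $\Fk$ under $\gk$, the decomposition $\gk=(\gpk,\tk)$, the Busemann computation of $\tk$, hence $\ell(\gk)\to\ell(g)$ and $\ell(\gpk)\to 0$) presupposes that $\gk$ fixes $\ximk$ and $\xippk$, the gap propagates through the entire argument.

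The paper resolves this by reversing the order of deductions: the assertion that $\gk$ fixes $\xippk$ and $\ximk$ — your starting point — is the \emph{last} thing proved. All intermediate statements are established for $\gk^N$: the Brouwer points are fixed by $\gk^N$ only; the decomposition of $\gk^N$ on $\Fk$ converges to that of $g^N$ on $F$, so its $\Ck$-component has translation length tending to $0$ by upper semicontinuity of $\ell$ (lemma \ref{lemm- Min(gk) reste a distance bornee}) and its $\RR$-component tends to $N\ell(g)$ — no Busemann functions or lower-semicontinuity argument are needed; point \ref{item- vgk tend vg} follows from $\vdt(\gk)=\frac{1}{N}\vdt(\gk^N)$ together with the functorial formulas (\ref{eq- vdt du produit}) and (\ref{eq- vdt d'un sous-esp}); and point \ref{item- xippgk et xippk} follows from proposition \ref{prop- faisceau stable et point attractif} applied to $\gk^N$, whose attractive fixed point coincides with $\xippgk$. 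Only then does point \ref{item- decomposition de gk} come out: since $\gk$ fixes $\xippgk$, it stabilizes the facet of the spherical building $\bordinf\Es$ containing it, hence, being type-preserving, fixes its closure pointwise — in particular it fixes $\xippk$, the unique point of type $\untype$ in that closure. This type-rigidity argument is the idea missing from your proposal. A secondary error: in the regular case, your claim $\Min(\gk)=\Fk$ is false in general, since $\gpk$ may be a nontrivial elliptic isometry of $\Ck$, so that $\Min(\gk)=\Min(\gpk)\times\RR$ can be a proper subset of $\Fk$.
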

\begin{proof}[Lemme \ref{lemm- continuite axes faible}]
Soit $p\in F$ et $D>0$.  
Il existe   une boule topologique  ferm{\'e}e $B$ telle que
$\xipp\in\interieur{B}$ et $B\subset\Ombm pD$
  (voir section \ref{ss- prel ES - Gxi variete}).
D'apr{\`e}s la proposition \ref{prop- dynamique}, il existe $N\in\NN$ tel
que $g^N(\Ombm p D )\subset\interieur{B}$.

Notons $\gkN=g_k^N$ et $\gN=g^N$.  On a $\gkN\tend\gN$ et  
$\ell(\gN)=N\ell(g)>0$, $\FgN=\Fg=F$ et $\xipp$ et $\xim$ sont 
les points fixes attractif et r{\'e}pulsif de $\gN$. 
On a $\gN(B)\subset\interieur{B}$.
Comme l'action de $G=\Aut(\Es)$ sur $G\xipp$ est continue,
il existe $K\in\NN$ tel que pour $k\geq K$, on a $\gkN(B)\subset
B$. Comme $\gkN$ est continue sur $B$, elle admet alors un point fixe
$\xippk$ dans $B$ (donc de m{\^e}me type que $\xipp$) par le th{\'e}or{\`e}me de Brouwer.
Quitte {\`a} extraire $\xippk \tend z$ dans $B$, fix{\'e} par $\gN$.  
Comme $\xipp$ est le seul point fixe de $\gN$ dans $B$
(proposition \ref{prop- dynamique}), on a $z=\xipp$.
De m{\^e}me, il existe $\ximk\in\bordinf\Es$  fix{\'e} par $\gkN$, de m{\^e}me type
que $\xim$, tel que $\ximk\tend \xim$. 
La proposition \ref{prop- points visuellement presque   opposes}
entra{\^\i}ne alors que $\Fk\tend F$.

On note $(\gp,t)$ la d{\'e}composition de $g$ sur $F=C\times\RR$. Alors
$t=\ell(g)$ et $\ell(\gp)=0$  (proposition \ref{prop- prel ES - l(g)=l(h)}). 
La d{\'e}composition de $\gN$ sur $F=C\times\RR$ est alors 
$(\gNp,\tN)$ avec  $\gNp=(\gp)^N$ et $\tN=Nt$. 
En notant $(\gkNp,\tkN)$ la d{\'e}composition de $\gkN$ sur
$\Fk=\Ck\times\RR$, on a alors que
$\gkNp\tend\gNp$ et $\tkN\tend \tN$.
Donc $\ell(\gNp)=N\ell(\gp)=0$  
et $\limsup\ell(\gkNp)\leq\ell(\gNp)$, donc 
$\ell(\gkNp)\tend 0$.

Nous pouvons maintenant montrer le point \ref{item- vgk tend vg}.
Soit $\Cb_k$ une chambre de Weyl ferm{\'e}e de $\Fk$ et $\Cb_k'$ la chambre
de Weyl ferm{\'e}e de $\Ck$ telle que $\Cb_k=\Cb_k'\times \RR$.
Notons $v_{\Fk}:\Aut(\Fk)\fleche \Cb_k$ et $v_{\Ck}:\Aut(\Ck)\fleche \Cb_k'$
les fonctions vecteur de translation.
D'apr{\`e}s (\ref{eq- vdt du produit}) et (\ref{eq- vdt d'un sous-esp}),
on a $v_{\Fk}(\gkN)=v_{\Ck}(\gkNp)+\tkN$ et $v(\gkN)=\type(v_{\Fk}(\gkN))$. 
Comme $\type(\tkN) \tend v(\gN)$ et $v_{\Ck}(\gkNp) \tend 0$,
on en d{\'e}duit que $v(\gkN) \tend v(\gN)$, car $\type$ est continue.
Donc $\vdt(\gk)=\frac{1}{N}\vdt(\gkN)\tend \frac{1}{N}\vdt(\gN)=\vdt(g)$.
Pour $k$
suffisamment grand, le point fixe attractif $\xippgk$ de $\gk$ existe car
$\ell(\gk)>0$ (par ce qui pr{\'e}c{\`e}de) (cf. remarque \ref{rema- pf attractif ES et IMM}), et il co{\"\i}ncide avec celui de $\gkN=\gk^N$.
  Les deux premi{\`e}res assertions du point \ref{item-
  xippgk et xippk} d{\'e}coule de la proposition \ref{prop- faisceau
  stable et point attractif} (appliqu{\'e}e {\`a} $\gkN$), car
  $\ell(\gkNp)\tend 0$. 
La troisi{\`e}me
assertion en d{\'e}coule par les propri{\'e}t{\'e}s des immeubles sph{\'e}riques. 
En effet, notons $\untype$ le type de $\xipp$. 
Le type de $\xippgkN$ est $\untype_k=\bord\vdt(\gk)$,
qui tend vers $\bord\vdt(g)=\untype$ dans $\Cb$.
 Donc pour $k$ assez grand
$\untype$ est dans l'adh{\'e}rence de la facette ouverte de $\Cb$
contenant $\untype_k$.
Il existe donc  un unique point $\afk$ de
de type $\untype$ dans l'adh{\'e}rence de la facette ouverte de
  $\bordinf\Es$ contenant $\xippgk$.
Alors
$\angleTits(\xippgk,\afk)=\angle(\untype_k,\untype)\tend 0$, 
donc $\angleTits(\afk,\xippk)\tend 0$, et $\angleTits(\afk,\xippk)$
est dans l'ensemble fini $D(\untype,\untype)$, donc $\afk=\xippk$ pour
$k$ assez grand.

Il ne reste qu'{\`a} montrer le \ref{item- decomposition de gk}.
Comme $\gk$ fixe l'adh{\'e}rence de la facette contenant $\xippgk$, on
obtient -enfin- que $\gk$ fixe $\xippk$. De m{\^e}me, on a que $\gk$ fixe
$\ximk$. 
On a alors $\gkNp=(\gpk)^N$ et $\tkN=N\tk$, ce qui conclut.
\qed\end{proof} %
\end{proof} %

\subsection{Continuit{\'e} asymptotique du vecteur de translation}
\label{ss- vdT passage au cone}
Soit $\om$ un ultrafiltre sur $\NN$ plus fin que le filtre de Fr{\'e}chet.
Soit $(\lak)_{k\in\NN}$ une suite de r{\'e}els positifs tendant vers
$\pinfty$, et $(\ok)_k$ une suite de points de $\Es$. Soit $(\Esom,\oom,\dom)$
le c{\^o}ne asymptotique de $(\Es,\ok,\lslak d)_k$ suivant l'ultrafiltre
$\om$ (voir section \ref{ss- prel conas}).

\begin{prop} \label{prop- passage au cone} 
Soit $(\gk)$ une suite dans $G$ telle que $\limom\lslak
d(\ok, \gk \ok)<\pinfty$.
Soit $\gom=[\gk]$ l'isom{\'e}trie 
de $\Esom$ associ{\'e}e.
Alors  $\lslak\vgk\tend_\om\vgom$ dans $\Cb$.
\end{prop}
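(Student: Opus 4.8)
The plan is to show that $w:=\limom\lslak v(\gk)$ exists and equals $v(\gom)$. Since $\norme{v(\gk)}=\ell(\gk)\le d(\ok,\gk\ok)$ and $\limom\lslak d(\ok,\gk\ok)<\pinfty$ by hypothesis, the vectors $\lslak v(\gk)$ stay in a fixed bounded region of $\Cb$, so $w$ is well defined and, the norm being continuous, $\norme{w}=\limom\lslak\ell(\gk)\le\ell(\gom)$ by Lemma \ref{lemm- Min(gk) reste a distance ombornee}. If $\ell(\gom)=0$ this already gives $w=0=v(\gom)$, so I may assume $\ell(\gom)>0$. The decisive structural input is that by Theorem \ref{theo- cone as = immeuble} the cone $\Esom$ is a complete affine building and $\gom\in\Gom$ acts by automorphisms; hence $\gom$ is semisimple by Theorem \ref{theo- immob}, $\Min(\gom)\neq\emptyset$, and $v(\gom)=\Cd(\xom,\gom\xom)$ for any $\xom\in\Min(\gom)$.

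Fix such an $\xom=\classom{\xk}$ with $(\xk)$ an $\om$-bounded sequence. Since $\xom\in\Min(\gom)$, Proposition \ref{prop- type asympt continu} gives $\lslak\Cd(\xk,\gk\xk)\tendom\Cd(\xom,\gom\xom)=v(\gom)$, and likewise $\lslak d(\xk,\gk\xk)\tendom\ell(\gom)$. Consequently it suffices to establish the single estimate $\lslak\norme{\Cd(\xk,\gk\xk)-v(\gk)}\tendom 0$: combined with the previous convergence it forces $w=\limom\lslak\Cd(\xk,\gk\xk)=v(\gom)$.

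To produce this estimate I would work with the Jordan decomposition $\gk=h_k\phi_k$ (with $\phi_k=e_ku_k$, $\ell(\phi_k)=0$), for which $v(\gk)=v(h_k)$ and $\Min(h_k)$ splits, by Proposition \ref{prop- d_g(x) si x dans Min(h)}, as a product $C\times\RR$ on which $\gk=(\phi_k,\ell(\gk))$. Writing $y_k$ for the projection of $\xk$ onto $\Min(h_k)$, the $1$-Lipschitz property of $\Cd$ (Proposition \ref{prop- Cd 1-Lip}), the functorial formulas (\ref{eq- vdt du produit})--(\ref{eq- vdt d'un sous-esp}), and the fact that $\type$ is $1$-Lipschitz, together yield a bound of the shape
\[\norme{\Cd(x,\gk x)-v(\gk)}\ \le\ 2\,d(x,\Min(h_k))+\sqrt{d_{\gk}(x)^2-\ell(\gk)^2}\,.\]
So I am reduced, taking $x=\xk$ and dividing by $\lak$, to proving the two facts $\lslak d(\xk,\Min(h_k))\tendom 0$ and $\limom\lslak\ell(\gk)=\ell(\gom)$.

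This package is the main obstacle: it is precisely the asymptotic upgrade of Lemma \ref{lemm- d_g et distance a F_g}, whose finite statement is not uniform along the sequence. The route I would take is to transport the problem into the building $\Esom$. Once $\lslak d(\xk,\Min(h_k))\tendom 0$ is known, one has $\classom{y_k}=\xom$, the isometry $h_\om=\classom{h_k}$ is well defined (as $\lslak d(y_k,h_k y_k)=\lslak\ell(\gk)$ is $\om$-bounded), and $\xom\in\Min(h_\om)$ by Lemma \ref{lemm- Min(gk) reste a distance ombornee}; since $v(h_k)=\Cd(y_k,h_k y_k)$, Proposition \ref{prop- type asympt continu} then gives $w=\Cd(\xom,h_\om\xom)=v(h_\om)$. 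It remains to see that the commuting limit factor $\phi_\om=\classom{\phi_k}$, whose finite factors satisfy $\ell(\phi_k)=0$, fixes $\xom$, so that $h_\om\xom=\gom\xom$ and $v(h_\om)=v(\gom)$. Both the vanishing $\lslak d(\xk,\Min(h_k))\tendom 0$ and the triviality $\phi_\om\xom=\xom$ express that the non-transvection part of $\gk$ carries a negligible share of the displacement at scale $\lak$; proving them rigorously is where the semisimplicity of isometries of the limit building (Theorems \ref{theo- cone as = immeuble} and \ref{theo- immob}) must be played against the symmetric-space displacement estimates of Section \ref{ss- prel isom}, and is the crux of the argument.
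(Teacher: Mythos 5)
There is a genuine gap, and you have located it yourself: your argument stops exactly where the proposition begins. Everything you actually prove — existence of the $\om$-limit $w$, the case $\ell(\gom)=0$, and the reduction via Proposition~\ref{prop- type asympt continu} to showing $\lslak\norme{\Cd(\xk,\gk\xk)-v(\gk)}\tendom 0$ for a representative $(\xk)$ of a point $\xom\in\Min(\gom)$ — is the easy part, and matches the paper. The two facts this reduction requires, namely (a) $\lslak d(\xk,\Min(h_k))\tendom 0$ and (b) $\limom\lslak\ell(\gk)=\ell(\gom)$, are not auxiliary lemmas: (b) \emph{is} the proposition for the length $\ell=\norme{v}$ (the inequality $\leq$ being the easy semicontinuity of Lemma~\ref{lemm- Min(gk) reste a distance ombornee}), and your route towards (a) is circular, since it begins with ``once $\lslak d(\xk,\Min(h_k))\tendom 0$ is known''. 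Worse, (a) is genuinely delicate and cannot follow from any soft or uniform estimate: it is \emph{false} when $\ell(\gom)=0$. Take $\Es=\HH^2$ and $\gk$ axial with axis at distance $R_k$ from $\xo$, where $R_k/\lak\tend\pinfty$, normalized so that $d(\xo,\gk\xo)=\lak$; then $\sinh(\ell(\gk)/2)=\sinh(\lak/2)/\cosh(R_k)$, and a short computation shows the midpoint $m_k$ of $[\xo,\gk\xo]$ satisfies $d(m_k,\gk m_k)=O(1)$, so $\gom$ is elliptic and $[m_k]\in\Min(\gom)$, while $h_k=\gk$ and $\lslak d(m_k,\Min(h_k))\approx\lslak(R_k-\lak/2)\tend\pinfty$. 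Hence any proof of (a) under the hypothesis $\ell(\gom)>0$ must use that hypothesis in an essential, quantitative way — it must in effect produce approximate axes for the $\gk$ — and nothing in Lemma~\ref{lemm- d_g et distance a F_g} (whose $\eta$ depends on $\gk$ with no uniformity in $k$) or in the semisimplicity of isometries of $\Esom$ (Theorems~\ref{theo- cone as = immeuble} and~\ref{theo- immob}) does this.

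Producing those approximate axes is precisely what the paper's proof does, by a mechanism entirely absent from your proposal: the dynamics of $\gk$ on $\bordinf\Es$. Fixing an axis $\rom=[r]$ of $\gom$ with $r$ a geodesic of $\Es$ of the same type (Proposition~\ref{prop- Gom geod type constant}), the paper proves the shadow-contraction Lemma~\ref{lemm- ombres contractees} — whose proof rests on the fact that angles can only increase in the cone (Proposition~\ref{prop- passage des angles au conas}) — and applies Brouwer's fixed point theorem inside a shadow to obtain, for $\om$-almost every $k$, fixed points of $\gk$ at infinity; Proposition~\ref{prop- points fixes isom axiale} identifies their classes with $\rom(\pm\infty)$, so after conjugation $\gk$ fixes $\xipp=r(\pinfty)$ and $\xim=r(\minfty)$ and splits as $(\gpk,\tk)$ on $\Fximp=\Cr\times r$. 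A projection argument in the cone (the projection of $\gom\oom$ onto $\rom$ is $\gom\oom$ itself) then forces $\lslak d(\xo,\gpk\xo)\tendom 0$, hence $\limom\lslak\ell(\gpk)=0$, and the functoriality formulas (\ref{eq- vdt du produit}) and (\ref{eq- vdt d'un sous-esp}) conclude — this last step being the only substantial point of contact with your sketch. In short, your reduction is sound but empty of the main content; to complete it you would have to prove (a) and (b), and the only known route is the boundary-dynamics argument above.
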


\begin{preuve} 
Commen\c cons par le cas o{\`u} $\lslak d(\Min (\gk),\xo)$ reste born{\'e}, 
ce qui est toujours le cas quand $\Es$ est un
immeuble affine complet
d'apr{\`e}s le th{\'e}or{\`e}me \ref{theo- immob},
qui se traite simplement 
de mani{\`e}re analogue {\`a} la preuve de la continuit{\'e} de $v$.
Alors $\limom\lslak d(\Min(\gk),\ok)<\pinfty$.
Soit $\xk$ dans $\Min(\gk)$ tel que $\limom \lslak d(\xk,\ok)
<\pinfty$.
Soit $\xom=[\xk]$. Le lemme \ref{lemm- Min(gk) reste a
distance ombornee} entra{\^\i}ne que $\xom\in\Min(\gom)$. 
Par cons{\'e}quent $\Cd_\gom(\xom)=v(\gom)$. 
D'apr{\`e}s la proposition \ref{prop- type asympt continu}, on a
$\Cd_\gom(\xom)=\limom \lslak\Cd_{\gk}(\xk)$, 
donc $\vgom=\limom \lslak \vgk$.

Supposons maintenant que $\Es$ est un espace sym{\'e}trique. 
Comme $\Esom$ est un immeuble affine complet
(th{\'e}or{\`e}me \ref{theo- cone as = immeuble}), on sait que 
$\gom$ fixe un point ou translate une g{\'e}od{\'e}sique 
(th{\'e}or{\`e}me \ref{theo- immob}).
On voit facilement que $\limom \lslak\lgk \leq \lgom$.  Par cons{\'e}quent,
si $\gom$ fixe un point, on a $\limom \lslak\lgk=\lgom=0$, donc, comme
$\norme{\vgk}=\lgk$ pour tout $k$, on a $\limom \lslak\vgk=0$, ce
qui conclut.
On peut donc supposer dor{\'e}navant que $\gom$ translate non trivialement
une g{\'e}o\-d{\'e}\-si\-que $\rom$ de $\Esom$.
Quitte {\`a} conjuguer chaque $\gk$, on peut supposer pour simplifier 
que la suite  $\ok$ est constante {\'e}gale {\`a}
$\xo$. 
Soit $r$ une g{\'e}od{\'e}sique de $\Es$ telle que $r(0)=\xo$, de m{\^e}me type
que $\rom$. Alors il existe une suite $(\hk)$ dans $G$ telle que
$\limom\lslak d(\xo, \hk\xo)<\pinfty$ et $h_\om=[\hk]$ envoie $[r]:\;
t\mapsto [r(\lak t)]$ sur $\rom$ (proposition \ref{prop- Gom geod type
  constant}). Quitte {\`a} conjuguer chaque $\gk$ par $\hk$, on peut donc
supposer  que $\rom=[r]$.
Notons $\xipp=r(\pinfty)$, $\xim=r(\minfty)$.
\begin{lemm}[Contraction des ombres] 
\label{lemm- ombres contractees}
Pour tous  $D,t,\eps$ dans $]0,\pinfty[$,  on a  
\[\gk(\Ombm{r(0)}{D})\subset\Ombm{r(t)}{\eps} \mbox{\ pour \omptt{} $k$}\; .\]
\end{lemm}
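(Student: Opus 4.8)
The plan is to argue by contradiction and reduce everything to one angle estimate. Suppose the statement fails for some $D,t,\eps$; since $\om$ takes only the values $0,1$, the set of $k$ admitting some $\xi\in\Ombm{r(0)}{D}$ with $\gk\xi\notin\Ombm{r(t)}{\eps}$ then has $\om$-measure one, so I may fix a sequence $(\xik)$ with $\xik\in\Ombm{r(0)}{D}$ and $\gk\xik\notin\Ombm{r(t)}{\eps}$ for $\om$-almost every $k$. For each $k$ I pick a geodesic $\eta_k$ with $\eta_k(\minfty)=\xim$, $\eta_k(\pinfty)=\xik$ and $d(\xo,\eta_k)\le D$, and let $p_k=\eta_k(0)$ be its point nearest $\xo=r(0)$, so $d(\xo,p_k)\le D$; I orient $r$ so that $\gom$ translates $\rom=[r]$ towards $\rom(\pinfty)$, i.e.\ $\gom(\rom(s))=\rom(s+\lgom)$ with $\lgom>0$. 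The goal is to prove $\limom\angle_{r(t)}(\xim,\gk\xik)=\pi$. Granting this, Proposition~\ref{prop- points visuellement presque opposes}, applied to $\gk\xik$ (of type $\untype=\btype(\xipp)$) and $\xim$ (of the opposite type $\untype^\opp$), gives $d(r(t),F_{\xim\gk\xik})\le\eps$ for $\om$-almost every $k$; a geodesic of that flat joins $\xim$ to $\gk\xik$ within $\eps$ of $r(t)$, so $\gk\xik\in\Ombm{r(t)}{\eps}$ — the sought contradiction.

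The only input about $\gk$ is the identity $\gom(\oom)=\rom(\lgom)$ (with $\oom=\rom(0)$), i.e.\ $\limom\lslak d(\gk\xo,r(\lak\lgom))=0$: up to an error $o(\lak)$, the map $\gk$ carries $\xo$ to the point $r(\lak\lgom)$ far out along $r$ towards $\xipp$. Two $\Es$-scale facts follow. First, since $d(\gk p_k,r(\lak\lgom))\le D+o(\lak)$ while $d(r(t),r(\lak\lgom))=\lak\lgom-t\to\infty$, a comparison bound $\angle_{r(t)}(\xipp,\gk p_k)\le\angletilde_{r(t)}(\xipp,\gk p_k)\le\arcsin\big((D+o(\lak))/(\lak\lgom-t)\big)$ yields $\angle_{r(t)}(\xipp,\gk p_k)\tendom 0$. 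The real work is the second fact, $\angle_{\gk p_k}(r(t),\gk\xim)\tendom 0$.

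This is where the asymptotic cone intervenes, and it is the main obstacle: the naive estimates only control things up to $o(\lak)$, which is worthless against the fixed radius $\eps$ at the fixed point $r(t)$. The remedy is the angle-increase inequality $\angle_\xom(\yom,\zom)\ge\limom\angle_{\xk}(\yk,\zk)$ of Proposition~\ref{prop- passage des angles au conas}, which produces upper bounds on $\om$-limits of genuine $\Es$-angles out of cone angles. Indeed, the rays $\eta_k|_{(-\infty,0]}$ and $r|_{(-\infty,0]}$ are asymptotic to the common endpoint $\xim$ with $d(\eta_k(0),r(0))\le D$, so by convexity their Hausdorff distance is $\le D$; hence $[\eta_k]$ coincides with $\rom$ on $(-\infty,0]$, and applying $\gom$ shows $[\gk\eta_k]$ coincides with $\rom$ on $(-\infty,\lgom]$, with $[\gk p_k]=\gom(\oom)=\rom(\lgom)$. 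Taking $m_k$ on the ray from $\gk p_k$ to $\gk\xim$ at distance $c\lak$, for a fixed $0<c<\lgom$, gives $[m_k]=\rom(\lgom-c)$, so the cone angle $\angle_{\rom(\lgom)}(\rom(0),\rom(\lgom-c))=0$ (both directions point back along $\rom$); the cited inequality then forces $\limom\angle_{\gk p_k}(r(t),\gk\xim)=0$.

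It remains to assemble the angles at $\gk p_k$ and at $r(t)$. Since $\gk p_k=\gk\eta_k(0)$ lies on the geodesic $\gk\eta_k$ from $\gk\xim$ to $\gk\xik$, one has $\angle_{\gk p_k}(\gk\xim,\gk\xik)=\pi$, whence $\angle_{\gk p_k}(r(t),\gk\xik)\ge\pi-\angle_{\gk p_k}(r(t),\gk\xim)\tendom\pi$. The $\CAT(0)$ inequality for the ideal triangle with apex $\gk\xik$, namely $\angle_{r(t)}(\gk p_k,\gk\xik)+\angle_{\gk p_k}(r(t),\gk\xik)\le\pi$ (obtained by letting a point run to $\gk\xik$ along the ray and using that finite triangles have angle sum $\le\pi$), then gives $\angle_{r(t)}(\gk p_k,\gk\xik)\tendom 0$. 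Combined with $\angle_{r(t)}(\xipp,\gk p_k)\tendom 0$ this yields $\angle_{r(t)}(\xipp,\gk\xik)\tendom 0$, and finally, using $\angle_{r(t)}(\xim,\xipp)=\pi$, we get $\angle_{r(t)}(\xim,\gk\xik)\ge\pi-\angle_{r(t)}(\xipp,\gk\xik)\tendom\pi$, which is the claim that closes the contradiction.
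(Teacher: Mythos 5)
Your proof is correct and follows essentially the same route as the paper's: argue by contradiction, pick geodesics from $\xim$ through the shadow, note that their ultralimits coincide with $\rom$ on $]\minfty,0]$, use the angle-increase property of the asymptotic cone (proposition \ref{prop- passage des angles au conas}), and conclude with proposition \ref{prop- points visuellement presque opposes}. The only difference is one of bookkeeping: the paper applies the angle-increase statement directly to the ideal endpoints to obtain $\angle_{r(t)}(\xipp,\gk\xik)\tendom 0$ in one stroke, while you derive the same limit through the finite points $\gk p_k$, $m_k$ and $r(\lak\lgom)$ together with standard $\CAT(0)$ angle inequalities, which is a more careful handling of the same step given that proposition \ref{prop- passage des angles au conas} is stated only for points of the space.
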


\begin{preuve}
On raisonne par l'absurde. Sinon, il existe $D,t,\eps >0$ et 
une suite $\zk$ dans $\bordinf\Es$ avec $\zk\in \Ombm {r(0)}D$ et
$\gk \zk\notin \Ombm {r(t)}\eps$ \ompp. Soit $\sigk$ une g{\'e}od{\'e}sique de
$\xim$ {\`a} $\zk$ telle que $d(r(0),\sigk(0))\leq D$.  Alors les
g{\'e}od{\'e}siques $\rom$ et  $\sigom
=[\sigk]$ de $\Esom$ sont confondues sur $]{\minfty},0]$, car elles sont
asymptotes en $\minfty$ et $\dom(\rom(0),\sigom(0))=\limom \lslak
d(r(0),\sigk(0))=0$.
\begin{figure}[h]
\centering
\input{lemme_contrac_ombres.pstex_t}
\end{figure}

On a donc $\angle_{\oom}(\rom(\pinfty),\ \gom\sigom(\pinfty))=0$.
Donc, comme le passage au c{\^o}ne asymptotique augmente les angles
(proposition \ref{prop- passage des angles au conas}),
 on a $ \angle_{r(t)}(\xipp,\gk \zk) \tendom 0$
(car $[r(t)]=\oom$).
Par cons{\'e}quent $ \angle_{r(t)}(\xim,\gk \zk) \tendom \pi$, et donc
  $d(r(t),F_{{\xim,\gk \zk}})\tendom 0$ 
par la proposition \ref{prop- points visuellement presque opposes}.
Or par hypoth{\`e}se  $d(r(t),F_{{\xim,\gk \zk}}) \geq \eps$ pour
\omptt{} $k$, contradiction.\end{preuve}

\medskip

\noindent{\em Fin de la preuve de la proposition \ref{prop- passage
au cone}.}
Soit $D>0$. 
 L'ombre $\Ombm {r(0)} D$ est un voisinage de $\xipp$ dans son orbite
$G\xipp$. Comme $G\xipp$ est une vari{\'e}t{\'e}, elle contient donc une boule
topologique euclidienne $B$ contenant $\xipp$ dans son int{\'e}rieur. Comme
les ombres $\Ombm {r(t)}\eps$, ${t,\eps>0}$, forment une base de
voisinages de $\xipp$ dans $G\xipp$ 
(proposition \ref{prop- ombres forment base de vois}),
il existe $t>0$ et $\eps>0$ tels que  ${\Ombm
{r(t)}\eps}\subset B$.
D'apr{\`e}s le lemme \ref{lemm- ombres contractees}, on a alors, pour
\omptt{} $k$, que $\gk(B)=B$ donc que $\gk$ admet un
point fixe $\xippk$ dans ${\Ombm {r(t)}\eps}$ par le th{\'e}or{\`e}me de Brouwer (car
$\gk$ est continue sur $B$).

Comme $\xippom=[\xippk]$ est un point de $\bordinf\Esom$ oppos{\'e} {\`a}
$[\xim]=\rom(\minfty)$ et fix{\'e} par $\gom$, qui translate
$\rom$, il est n{\'e}cessairement {\'e}gal {\`a}
$\rom(\pinfty)=[\xipp]$ (proposition \ref{prop- points fixes isom axiale}).
On peut donc supposer, quitte {\`a} conjuguer chaque $\gk$, que
$\xippk=\xipp$, c'est-{\`a}-dire que $\gk$ fixe $\xipp$ pour \omptt{} $k$.
On peut aussi supposer (en {\'e}changeant les r{\^o}les de $\xipp$ et $\xim$)
que $\gk$ fixe $\xim$ pour \omptt{} $k$.

Pour \omptt{} $k$, on a alors que $\gk$ pr{\'e}serve le faisceau
$\Fximp=F_r=C_r\times r$ et $\gk$ agit dessus comme $(\gpk,t_k)$
(voir section \ref{ss- CAT(0)}).
Soit $\pk=\gk\xo$ et $\qk$ sa projection  sur
$r$. Alors $\qk=t_k\xo$.
\begin{figure}[h]
\centering
\input{prop_v2cone2.pstex_t}
\end{figure}
Par 
la proposition \ref{prop- passage des angles au conas}, 
on a que $q_\om=[\qk]$ est  la projection  de $\gom \oom$ sur $\rom$.
Or $\gom \oom\in\rom$, donc on a $q_\om=\gom \oom$. 
Par cons{\'e}quent $\lslak d(\xo,\gpk\xo)\tendom 0$, et  donc $\limom
\lslak \ell(\gpk)=0$.
On en d{\'e}duit que $\limom \lslak\Cd(\xo,t_k\xo)=\Cd(\oom,\gom \oom)=\vgom$.

Soit $\Cb_F$ une chambre de Weyl ferm{\'e}e de $F=F_r$ et $\Cb_C$ la chambre de Weyl ferm{\'e}e de
$C=C_r$ telle que $\Cb_F=\Cb_C\times \RR$.
Notons $v_F:\Aut(F)\fleche \Cb_F$ et $v_F:\Aut(C)\fleche \Cb_C$
les fonctions vecteur de translation.
D'apr{\`e}s (\ref{eq- vdt du produit}) et (\ref{eq- vdt d'un sous-esp}), on a 
$v_F(\gk)=v_C(\gpk)+t_k$ et $\vgk=\type(v_F(\gk))$.
Or on vient de voir que $\limom \lslak v_C(\gpk)=0$ et que $\limom
\type(\lslak t_k)=\vgom$, donc, $\type$ {\'e}tant continue, on a $\lslak\vgk
\tendom \vgom$.
\end{preuve}

\begin{rema}
On a en fait d{\'e}montr{\'e} de plus les propri{\'e}t{\'e}s suivantes.
On suppose que $\ell(\gom)> 0$. Soit $\rom$ un axe de
 $\gom$ et $\ximom$ et $\xippom$ ses
extr{\'e}mit{\'e}s.
On note (pour $k$ suffisamment grand) $\xippgk$ et $\ximgk$ les points
fixes attractif et r{\'e}pulsif de $\gk$ dans $\bordinf\Es$ (cf. remarque
\ref{rema- pf attractif ES et IMM}), et $\xippk$ (resp. $\ximk$)
l'unique point  de type  $\vgom$ (resp.  $\vgom^\opp$) dans
l'adh{\'e}rence de la facette ouverte de $\bordinf\Es$ contenant $\xippgk$
(resp. $\ximgk$).  On a :
\begin{enumerate}
\item  $\xippom=[\xippgk]$ et $\ximom=[\ximgk]$.

\item 
Il existe
une suite \ombornee{} de g{\'e}od{\'e}siques $\rk$ de $\ximk$ {\`a} $\xippk$ telle
que  $\rom=\classom{\rk}$. 
En particulier $\gk\rk$ est parall{\`e}le {\`a} $\rk$ et $\lslak
d(\rk,\gk\rk)\tendom 0$.
\end{enumerate}
En particulier, 
si $\vgom$ est r{\'e}gulier alors 
$[\Min(\gk)]=\Min(\gom)$.
\end{rema}

\section{Compactification}
\label{s- compactification}

 Soit $\Ga$ un groupe infini de type fini, discret.
Soit $\KK$ un corps local (notons que $\KK$ est ou bien $\RR$ ou
$\CC$, ou bien ultram{\'e}trique).
On suppose que $G$ est un groupe r{\'e}ductif sur $\KK$ et que $\Es$ est
l'espace associ{\'e}, c'est {\`a} dire qu'on reprend les hypoth{\`e}ses et
notations  de la section \ref{ss- espace associe a un GR}.
On note $\xo$ un point-base de $\Es$. 

\subsection{Espaces de repr{\'e}sentations et quotient}
\label{ss- espace de rep et quotient}
On consid{\`e}re l'ensemble, not{\'e} $\R$ ou $\RGaG$ des repr{\'e}sentations de
$\Ga$ dans $G$, muni de la topologie de la convergence simple.
Si $S$ est une partie g{\'e}\-n{\'e}\-ra\-tri\-ce finie de $\Ga$,
l'espace $\R$ s'identifie {\`a} un sous-ensemble ferm{\'e}
de $G^S$ par l'application $\rho\mapsto(\rho(s))_{s\in S}$.
L'espace $\R$  est  m{\'e}trisable, {\`a} base d{\'e}nombrable, localement compact.
Le groupe  $G$ agit con\-ti\-n{\^u}\-ment sur $\R$ par
conjugaison.

L'espace topologique quotient usuel $\RsG$ n'{\'e}tant en g{\'e}n{\'e}ral pas
s{\'e}par{\'e}, 
on le remplace par son plus gros quotient s{\'e}par{\'e} $\Xsep=\RssG$, 
qui peut {\^e}tre d{\'e}crit, comme nous allons le voir, soit comme
le quotient de $\R$ par une relation d'{\'e}quivalence naturelle $\sim$,
soit plus explicitement comme une r{\'e}traction sur un sous-espace
naturel $\Rcr/G$ de $\RsG$ (la {\em semisimplification}).

\paragraph{Repr{\'e}sentations compl{\`e}tement r{\'e}ductibles.}
On dit que $\rho \in \R$ est {\em com\-pl{\`e}\-te\-ment r{\'e}\-duc\-ti\-ble}
(cr) si, pour tout $\alpha\in \bordinf \Es$ fix{\'e} par $\rho$, il existe
$\beta\in\bordinf \Es$, oppos{\'e} {\`a} $\alpha$, {\'e}\-gal\-e\-ment fix{\'e} par
$\rho$.
Cette notion, introduite par J.P.~Serre \cite{SerreCR} pour les
actions sur des immeubles sph{\'e}riques et les groupes alg{\'e}briques r{\'e}ductifs, est peu
restrictive.  
On renvoie {\`a} \cite{ParRepcr} pour
une {\'e}tude g{\'e}o\-m{\'e}\-trique et d'autres caract{\'e}risations naturelles.
Du point de vue alg{\'e}brique cela revient {\`a} dire que pour tout
sous-groupe parabolique $P$ de $G$ contenant $\rho(\Ga)$, il existe un
sous-groupe de Levi de $P$ contenant $\rho(\Ga)$ \cite[3.2.1]{SerreCR}.
Les repr{\'e}sentations Zariski-denses sont cr.
Dans le cas o{\`u} $G=\GLn\KK$, une repr{\'e}sentation est cr si et seulement
si l'action lin{\'e}aire sur $\KK^n$ associ{\'e}e est semi-simple.
Si la caract{\'e}ristique du corps $\KK$ est nulle, alors 
$\rho$ est cr si et seulement si  la composante neutre  de
l'adh{\'e}rence de Zariski de $\rho(\Ga)$ est un groupe r{\'e}ductif
\cite[Proposition 4.2]{SerreCR}.
Cela {\'e}quivaut par \cite{Richardson} {\`a} demander que l'orbite
$\Gb\cdot\rho$ soit Zariski-ferm{\'e}e dans la vari{\'e}t{\'e} alg{\'e}brique
$\R(\Ga,\Gb)$,
 ou encore par \cite{Bremigan} que l'orbite $G\cdot\rho$ soit ferm{\'e}e
 dans $\RGaG$.

\paragraph{Plus gros quotient s{\'e}par{\'e}.}
On note l'espace $\Rcr$ le sous-espace de $\R$
form{\'e} des re\-pr{\'e}\-sen\-ta\-tions cr, qui est stable sous $G$.  On
note $\RcrsG$ l'espace quotient, muni de la topologie quotient (dont
on rappelle qu'elle co{\"\i}ncide avec la topologie induite par l'inclusion
dans l'espace topologique quotient $\R/G$, voir la proposition 10 de
\cite[III, {\S} 2]{BouTG}). Cet espace est {\`a} base d{\'e}nombrable car $\R$
l'est.

Le r{\'e}sultat suivant, qui montre que $\RcrsG$ est le plus gros
quotient s{\'e}par{\'e} de $\R$ sous $G$, 
est bien connu pour $\KK=\CC$, et pour $\KK=\RR$,
il r{\'e}sulte de \cite{Richardson}, \cite{Luna} et \cite{RiSl}. Pour
$\KK$ de caract{\'e}ristique $0$ il r{\'e}sulte de \cite{Bremigan}. On en
trouvera une d{\'e}monstration directe, qui couvre aussi le cas de la
caract{\'e}ristique non nulle, dans \cite{ParRepcr}, auquel on renvoie
pour plus de d{\'e}tails.

\begin{theo}

\begin{enumerate}
\item L'espace $\RcrsG$ est s{\'e}par{\'e} et localement compact.

 \item  Toute orbite de $G$ dans $\R$ contient dans son
adh{\'e}rence une unique orbite cr $\ssO(\rho)$ ({\em
  semisimplification}).

\item La propri{\'e}t{\'e} $\ov{G\cdot\rho}\cap \ov{G\cdot\rho'}\neq \emptyset$
  d{\'e}finit une relation d'{\'e}quivalence $\rho\sim\rho'$ sur $\R$.
La projection $G$-invariante $\ssO
:\R \fleche \RcrsG$ est continue, et induit un ho\-m{\'e}o\-mor\-phisme $h$ de
l'espace quotient $\RssG:=\R/\sim$ sur $\RcrsG$, tel que
$\ssO=h\circ\psep$, o{\`u} $\psep:\R\fleche \RssG$ est la projection
canonique.
En particulier, $\RssG$ (resp. $\RcrsG$) est le plus gros quotient s{\'e}par{\'e} de
$\R$ sous $G$ (toute application continue $G$-invariante $f$ de
$\R$ vers un espace s{\'e}par{\'e} factorise {\`a} travers $\psep$).
\end{enumerate}
\end{theo}

On notera au passage que la topologie quotient issue de la
semisimplification $\ssO$ (utilis{\'e}e dans la litt{\'e}rature)
co{\"\i}ncide avec la topologie quotient usuelle de $\RcrsG$.
On identifiera dor{\'e}navant $\RssG$ et $\RcrsG$ (et les projections
$\psep$ et $\ssO$) via $h$. 
On notera cet espace $\XsepGaG$ ou $\Xsep$.
On notera $[\rho]$ la classe de
 $\rho\in\R$ pour $\sim$.
Remarquons que l'image dans $\Xsep$ d'un ferm{\'e} $G$-stable $F$ de $\R$
est un ferm{\'e} (car $F$ est stable par la semisimplification $\ssO$).
\paragraph{Lien avec le quotient alg{\'e}bro-g{\'e}om{\'e}trique.}
En ca\-rac\-t{\'e}\-ris\-ti\-que nulle,
il est {\'e}galement classique
de travailler avec le {\em quotient alg{\'e}bro-g{\'e}om{\'e}trique}
$\R\sslash{\Gb}$, qui est d{\'e}fini comme l'image canonique de $\R$
dans   le quotient alg{\'e}bro-g{\'e}om{\'e}trique
$\R(\Ga,\Gb)\sslash{\Gb}$ de la vari{\'e}t{\'e} alg{\'e}brique affine
$\R(\Ga,\Gb)$ sous l'action de $\Gb$. 

L'espace $\Xsep=\RssG$ consid{\'e}r{\'e} dans cet article est un peu ``plus
gros'' que $\R\sslash{\Gb}$, dans
le sens suivant.
On peut identifier $\R(\Ga,\Gb)\sslash{\Gb}$ {\`a} l'espace
des $\Gb$-orbites Zariski-ferm{\'e}es dans $\R(\Ga,\Gb)$, et $\Xsep$ {\`a} l'espace
des $G$-orbites ferm{\'e}es dans $\R$. 
La projection canonique $t: \R \fleche \R\sslash{\Gb}$ 
factorise en l'application naturelle 
$P: G\cdot\rho \mapsto \Gb\cdot\rho$  
de $\Xsep$ dans $\R\sslash{\Gb}$.
  Cette application (continue et surjective) est ferm{\'e}e et {\`a} fibres
  finies (voir \cite{Luna}, \cite{RiSl} pour $\KK=\RR$ et
  plus g{\'e}n{\'e}ralement \cite[5.17]{Bremigan} en caract{\'e}ristique nulle).

  \paragraph{Action de $\Out(\Ga)$}
Le groupe $\Aut (\Ga)$ des automorphismes de $\Ga$ agit {\`a} droite, par
pr{\'e}\-com\-po\-si\-tion  sur l'espace
$\R$.
Cette action est continue et commute avec l'action de $G$,
 donc induit une action 
de $\Aut(\Ga)$ sur $\Xsep$.
Le sous-groupe $\Int(\Ga)$ des automorphismes int{\'e}rieurs de $\Ga$ agit
trivialement sur $\Xsep$.
On obtient donc une action (par hom{\'e}omorphismes) du groupe $\Out(\Ga)
=\Aut(\Ga) /\Int(\Ga)$
sur $\Xsep$.

\bigskip

On va maintenant construire une compactification de $\Xsep$,
reposant sur la notion de vecteur de translation d{\'e}velopp{\'e}e en section
\ref{s- VdT}. 

\subsection{Spectre marqu{\'e} des vecteurs de translation}
\label{ss- spectre vdt}
On fixe un plat maximal $\Aa$ de $\Es$, de
groupe de Weyl vectoriel $\Wvect$, et une chambre de Weyl ferm{\'e}e $\Cb$ de $\Aa$.
Le groupe $G$ agit sur $\Es$ par automorphismes, 
donc le vecteur de translation $v : G \fleche \Cb$ est bien d{\'e}fini
(voir section \ref{s- VdT}).

\begin{rema*}[Remplacement par les longueurs]
 Dans tout ce qui suit, on peut remplacer 
$v:G\fleche\Cb$ par la  longueur de
translation $\ell=\norme{v}:G\fleche \RRp$ 
dans $\Es$ (cf. section \ref{ss- CAT(0)}). 
Les compactifications obtenues sont alors moins fines.
\end{rema*}

On appellera {\em spectre marqu{\'e} des vecteurs de translation} d'une
re\-pr{\'e}\-sen\-ta\-tion $\rho:\fleche G$ la fonction
$v\circ\rho:\Ga\fleche \Cb$. 
L'espace $\CGa$
est muni de la topologie produit, qui est m{\'e}trisable de type
d{\'e}nombrable (mais pas localement compacte).
Le groupe $\Aut(\Ga)$ agit naturellement 
sur $\CGa$.
L'application $\Vhat:\rho\mapsto v\circ \rho$ de $\R$ dans $\CGa$ est
$\Aut(\Ga)$-{\'e}quivariante, invariante sous l'action par conjugaison de
$G$, et continue (car $v$ l'est, proposition \ref{prop v continue}).
Elle passe donc au quotient en une
application continue $\Aut(\Ga)$-{\'e}quivariante
$$\V:\begin{array}{ccl} 
\Xsep & \fleche & \CGa\\[0ex]%
[\rho] & \mapsto & v\circ\rho
\end{array}\; .$$

\subsection{Le th{\'e}or{\`e}me de compactification}
Soit $\PCGa$ le projectifi{\'e} de $\CGa$, 
c'est-{\`a}-dire l'espace topologique quotient $\CGao$
par la relation d'{\'e}\-qui\-va\-lence $u \simeq w$ 
si et seulement s'il existe $t\in\RR_{>0}$ tel que $u=t w$. 
Cet espace est  s{\'e}par{\'e}, {\`a} base d{\'e}nombrable (mais pas localement compact).
On note $[w]$ la classe dans $\PCGa$ de $w\in\CGao$, et
$\PP:\CGao\fleche\PCGa$ la projection canonique, qui est continue et
ouverte.
L'action de $\Aut(\Ga)$ sur $\CGa$ passe au quotient en une action
(par hom{\'e}omorphismes) sur $\PCGa$.
\begin{theo} 
\label{theo- comp}
\begin{enumerate}
\item Le sous-ensemble $\Xsepo=\V^{-1}(0)$ est un compact.

\item L'application continue
$$\PV=\PP\circ\V :\Xsep - \Xsepo \fleche \PCGa \;.$$ est ``d'image
  relativement compacte {\`a} l'infini'', i.e.  il existe un compact
  $\Ksep$ de $\Xsep$, contenant $\Xsepo$, tel que $\PV(\Xsep -\Ksep)$
  est relativement compact.

En particulier, elle induit 
une compactification (m{\'e}trisable) $\Xsept$
de $\Xsep$, dont le bord $\bordinf\Xsept=\Xsept-\Xsep$
s'identifie {\`a} une partie  de $\PCGa$, et caract{\'e}ris{\'e}e 
par la propri{\'e}t{\'e} suivante :
 une suite $[\rok]_k$ dans $\Xsep$ converge dans $\Xsept$ vers $[w]$
 dans $\bordinf\Xsept\subset\PCGa$ si et seulement si $[\rok]_k$
sort de tout compact de $\Xsep$
et $[\vrok]\tend [w]$ dans $\PCGa$.

\item
L'action naturelle de $\Out(\Ga)$ sur $\Xsep$ se prolonge contin{\^u}ment
au compactifi{\'e} $\Xsept$, avec $[\phi].[w]=[w\circ\phi^{-1}]$ pour tous
$\phi\in\Aut(\Ga)$ et $w\in\CGa$.

\item Si $[w]\in \bordinf\Xsept$, alors 
$[w]=[\vro]$ avec $\rho$ une
action de $\Ga$ sur un immeuble affine $\Delta$ de type $(\Aa,\Wvect)$, sans
point fixe global.
On peut de plus supposer que cette action provient d'une
repr{\'e}sentation non born{\'e}e $\rho :\Ga\fleche \Gb(\KKom)$, avec $\KKom$
un corps valu{\'e} ultram{\'e}trique et $\Delta$ un sous-immeuble 
de l'immeuble de Bruhat-Tits de $\SLn(\KKom)$.  
\end{enumerate}
\end{theo}

 \begin{remas*} 
1. La compactification obtenue ne d{\'e}pend pas du choix du compact
 $\Ksep$. Le corps $\KKom$ est complet mais sa valuation n'est pas a
 priori discr{\`e}te (et l'immeuble impliqu{\'e} n'est pas simplicial).

2. (Un crit{\`e}re de rigidit{\'e})
En particulier, s'il n'existe pas d'action de $\Ga$ sans point fixe
global sur un immeuble affine de type $(\Aa,\Wvect)$, ou s'il n'existe
pas de repr{\'e}sentation $\rho$ non born{\'e}e de $\Ga$ dans $\Gb(\KKom)$
avec $\KKom$ corps valu{\'e} ultram{\'e}trique, alors l'espace $\Xsep$ est compact.
3. (Injectivit{\'e} du spectre)
L'application $\V$ n'est a priori pas injective sur $\Xsep$ tout
entier, ne serait-ce que parce que toutes les repr{\'e}sentations {\`a}
valeurs dans un sous-groupe compact $K$ de $G$ ont le m{\^e}me spectre (nul).
On renvoie {\`a} \cite{DaKi} pour des r{\'e}sultats sur l'injectivit{\'e} (qui
n'est pas utilis{\'e}e ici) du spectre des longueurs $\mathcal{L}$ (donc
de $\V$) et de son projectifi{\'e} $\PP\mathcal{L}$ (donc de
$\PV$) pour les repr{\'e}sentations Zariski-denses dans $G$ semisimple,
dans le cas o{\`u} $\KK=\RR$.
\end{remas*}
Avant de d{\'e}montrer ce th{\'e}or{\`e}me (en section \ref{ss- demo theo comp}),
on commence par expliquer dans un cadre purement topologique la
construction de la compactification associ{\'e}e {\`a} une application d{\'e}finie
hors d'un compact, continue, d'image relativement compacte {\`a} l'infini.

\subsection{Pr{\'e}liminaires topologiques sur la notion de compactification}
\label{ss- methode compactification}

Dans cette section $\E$ d{\'e}signe un espace topologique quelconque.

\begin{defi} Une {\em compactification} de $\E$ est une paire $(g,\Et)$, o{\`u}
$\Et$ est un espace topologique compact et $g : \E \longrightarrow
\Et$ un hom{\'e}omorphisme sur son image, avec $g(\E)$ ouvert et dense
dans $\Et$. Le compl{\'e}mentaire de $g(\E)$ dans $\Et$ est appel{\'e} le {\em
bord ({\`a} l'infini)} de $\Et$ et not{\'e} $\bordinf \Et$.  \end{defi}
\begin{rema*}
Pour que $\E$ admette une compactification
(m{\'e}trisable), il est n{\'e}cessaire qu'il soit localement
compact ({\`a} base d{\'e}nombrable), en particulier s{\'e}par{\'e}.
\end{rema*}

On suppose d{\'e}sormais que $\E$ est localement compact, non compact.
On pose $\Ehat=\E \cup \{ \infty \}$, muni de la topologie {\'e}tendant
celle de $\E$ o{\`u} les compl{\'e}mentaires des compacts de $\E$ forment une
base de voisinages de $\infty$. Alors $(\id, \Ehat)$ est une
compactification de $\E$, appel{\'e}e {\em compactification d'Alexandroff}
de $\E$.
Si $\E$ est m{\'e}trisable, d{\'e}nombrable {\`a} l'infini, ou, de mani{\`e}re
{\'e}quivalente, si $\E$ est {\`a} base d{\'e}nombrable, alors $\Ehat$ est
m{\'e}trisable.
Soit $C$ un espace topologique s{\'e}par{\'e} et $f:\E\fleche C$ continue.
On dira que $\tilde{f} :\Et \fleche C$ continue est un {\em prolongement
de $f$ {\`a} $\Et$} si $\tilde{f}\circ g = f$. 
Le prolongement est unique s'il existe. 
On dit qu'une compactification $(g_1, \E_1)$ de $\E$ est {\em plus
fine} qu'une compactification $(g_2, \E_2)$ de $\E$ si $g_2$ poss{\`e}de
un (unique) prolongement continu $h$ {\`a} $\E_1$ (qu'on appellera le {\em
morphisme canonique}).  On a alors $h(\E_1)= \E_2$ et
$h(\bordinf\E_1)= \bordinf\E_2$.
Les compactifications $\E_1$ et
$\E_2$ sont {\em isomorphes} (i.e. $\E_1$ est plus fine que $\E_2$ et
$\E_2$ est plus fine que $\E_1$) si et seulement si $h$ est un
hom{\'e}omorphisme.
La compactification d'Alexandroff est la compactification la moins
fine.

\paragraph{Compactifier hors d'un compact.}
Soit $\K$ un compact de $\E$ et $\F$ le compl{\'e}mentaire de $\interieur{\K}$ dans
$\E$. Soit $(h,\Ft)$ une compactification de $\F$. Soit $\Et$ l'espace
topologique obtenu en recollant $\K$ et $\Ft$ sur le ferm{\'e} $\bord \K$
au moyen de $h_{|\bord \K}$, 
et $g$ l'application de $\E$ dans
$\Et$, {\'e}gale {\`a} $\id$ sur $\K$ et {\`a} $h$ sur $\F$.
On voit facilement que 
$(g,\Et)$ est une compactification
de $\Et$, dite {\em induite} par $(h,\F)$, dont le bord $\bordinf \Et$
s'identifie  {\`a} celui de $\Ft$.
 Si $(h,\Ft)$ provient (par restriction) d'une compactification
$(g,\Et)$ de $\E$, alors la compactification de $\E$ induite par
$(h,\Ft)$ est isomorphe {\`a} la compactification de d{\'e}part $(g,\Et)$.
\paragraph{Compactification induite par une application continue}
(voir aussi \cite{MoSh}).
Soit $C$ un espace topologique s{\'e}par{\'e} et $f:\E\fleche C$ continue
 d'image relativement compacte.
 Soit $\Ehat=\E\cup\{\infty\}$ le compactifi{\'e} d'Alexandroff de $\E$.
 Soit $g : \E \fleche \Ehat\times C$ l'application continue $x\mapsto
 (x,f(x))$, et $\Et$ l'adh{\'e}rence de $g(\E)$,
qui est
compacte, et de la forme $\Et=g(\E) \cup (\{\infty\}\times B)$ avec
$B\subset C$. Alors $(g,\Et)$ est une compactification de $\E$, dite
{\em associ{\'e}e {\`a}} ou {\em induite par} $f$, 
dont le bord s'identifie {\`a} $B\subset C$. 
Soit $C'$ un autre espace topologique s{\'e}par{\'e} et $h :C\fleche C'$
 continue. Alors $f'=h\circ f$ induit une compactification $(g',\Et')$
de $\E$ moins fine que $\Et$.

Plus g{\'e}n{\'e}ralement, supposons que $f:E\fleche C$ est seulement 
d{\'e}finie sur le
com\-pl{\'e}\-men\-tai\-re $\E-\A$ d'une partie relativement compacte $\A$
de $\E$,
continue, 
et d'image relativement compacte {\`a} l'infini,  
i.e il existe un compact $\K$ de $\E$ tel que $f(\E-\K)$ est relativement
compact dans $C$.
Alors, d'apr{\`e}s ce qui pr{\'e}c{\`e}de, pour tout compact $\K$ de $\E$
contenant $\A$ dans son int{\'e}rieur avec $\ov{f(\E-\interieur{\K})}$ compact,
$f$ d{\'e}finit une compactification
$\Ft$ de $\F=\E-\interieur{\K}$, donc une compactification $\Et$ de $\E$,
dite {\em induite} par $f$. Cette compactification ne d{\'e}pend pas du
choix de $\K$ ({\`a} isomorphisme pr{\`e}s).

Si $\E$ et $C$ sont {\`a} base d{\'e}nombrable, alors $\Et$ est m{\'e}trisable.
Une suite $\xk$ de $\E$ converge dans $\Et$ vers $b$ dans $\bord\Et$
identifi{\'e} {\`a} $B\subset C$ si et seulement si $\xk$ sort de tout compact
de $\E$ et $f(\xk)\tend b$ dans $C$.

\subsection{Fonctions de d{\'e}placement}
\label{ss- deplacement}

On introduit maintenant quelques outils suppl{\'e}\-men\-taires venant de la
g{\'e}om{\'e}trie $\CAT(0)$, dont on aura besoin en section \ref{ss- demo theo
  comp} pour d{\'e}montrer le th{\'e}or{\`e}me de compactification.

Soit $S$ une partie g{\'e}n{\'e}ratrice finie de $\Ga$, et $\lmotsS{\cdot}$ la longueur des mots sur $\Ga$ associ{\'e}e. 
On voit $\rho:\Ga\longrightarrow G$ comme 
une action de $\Ga$ sur $\Es$ (l'espace $\CAT(0)$  associ{\'e} {\`a} $G$).
On appelle {\em fonction de d{\'e}placement} de $\rho$
(relativement {\`a} la partie g{\'e}\-n{\'e}\-ra\-tri\-ce $S$) et on note $\dro$ la
fonction convexe continue 
$$\begin{array}{cccl}\dro :& \Es & \longrightarrow &\RR_+ \\
                     & x & \mapsto & \sqrt{\sumS d(x,\rho(s)x)^2}
\end{array}$$ 
On appelle {\em minimum de d{\'e}placement} de $\rho$ (relativement {\`a} la
partie g{\'e}\-n{\'e}\-ra\-tri\-ce $S$) et on note $\laro$ le nombre r{\'e}el positif
$$\laro=\inf_{x\in \Es}\dro(x),$$ qui est un invariant de conjugaison.
 La fonction $\la : \R\fleche \RRp$ va nous permettre de contr{\^o}ler les
  repr{\'e}sentations partant {\`a} l'infini dans le quotient $\Xsep$,
 et de renormaliser le spectre des vecteurs $\V$.

\begin{prop} \label{prop- majoration de la lT}
Pour tout $\rho\in\R$,
on a 
\(\ \ell\circ\rho\leq \laro\lmotsS{\cdot}\ \)
 sur $\Ga$.
\qed\end{prop}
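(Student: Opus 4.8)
The plan is to bound the translation length of $\rho(\ga)$ by the displacement of $\rho$ at an arbitrary point, using a telescoping triangle inequality along the $\rho$-orbit, and then to take the infimum over the basepoint.

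First I would fix $x \in \Es$ and $\ga \in \Ga$, and write $\ga = s_1 \cdots s_n$ as a word of minimal length $n = \lmotsS{\ga}$ in the generators, with each $s_i \in S \cup S^{-1}$. Applying the triangle inequality along the chain of points $x, \rho(s_1)x, \rho(s_1 s_2)x, \ldots, \rho(\ga)x$, and using that each $\rho(s_1 \cdots s_{i-1})$ is an isometry (so that $d(\rho(s_1\cdots s_{i-1})x,\, \rho(s_1 \cdots s_i)x) = d(x, \rho(s_i)x)$), I obtain
\[
d(x, \rho(\ga)x) \leq \sum_{i=1}^n d(x, \rho(s_i)x).
\]

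The second step is to dominate each summand by $\dro(x)$. Since $\rho(s)$ is an isometry one has $d(x, \rho(s^{-1})x) = d(x, \rho(s)x)$, so every term $d(x, \rho(s_i)x)$ equals $d(x, \rho(s)x)$ for some $s \in S$; and each such quantity is at most $\dro(x) = \sqrt{\sumS d(x, \rho(s)x)^2}$, being a single term under the square root. Hence $d(x, \rho(s_i)x) \leq \dro(x)$ for each $i$, which gives $d(x, \rho(\ga)x) \leq n\, \dro(x) = \lmotsS{\ga}\, \dro(x)$.

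Finally, since $\ell(\rho(\ga)) = \inf_{y \in \Es} d(y, \rho(\ga)y) \leq d(x, \rho(\ga)x)$, combining with the previous bound yields $\ell(\rho(\ga)) \leq \lmotsS{\ga}\,\dro(x)$ for every $x \in \Es$. As the left-hand side does not depend on $x$, taking the infimum over $x \in \Es$ on the right gives $\ell(\rho(\ga)) \leq \lmotsS{\ga}\, \laro$, which is the claimed inequality. I expect no genuine obstacle here: the argument is elementary, and the only point requiring a little care is the passage between the $\ell^2$-type displacement $\dro$ and the individual generator displacements, together with the treatment of the inverse generators through the isometry property.
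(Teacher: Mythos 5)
Your proof is correct, and it is precisely the elementary argument the paper has in mind: the proposition is stated there with an immediate \qed and no written proof, so the intended justification is exactly this telescoping triangle-inequality along the word $s_1\cdots s_n$, the isometry property to handle inverse generators and reduce each term to some $d(x,\rho(s)x)\leq\dro(x)$, and the passage to the infimum over the basepoint. Nothing is missing; the handling of $S\cup S^{-1}$ and of the $\ell^2$-type displacement $\dro$ is exactly the care the statement requires.
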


Pour tout r{\'e}el positif $D$, notons 
$\RleqD=\{\rho\in\R\ |\ \dro(\xo)\leq D\}$,
qui est un compact de $\R$ (car
l'action de $G$ sur $\Es$ est propre), et $\XsepD=\psep(\RleqD)$, qui
est un compact de $\Xsep$. 
On d{\'e}montre facilement la propri{\'e}t{\'e} suivante.
 
\begin{prop}
  \label{prop- lambda borne implique compact}
Il existe $L>0$ tel que, pour tout $D>0$, si $\laro < D$, alors 
il existe $g\in G$ tel que $g\cdot \rho \in \Rleq {D+L}$. 
En particulier $[\rho]$ est dans le compact $\Xsepleq {D+L}$.
\qed\end{prop}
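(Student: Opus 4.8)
Le plan est de ramener l'{\'e}nonc{\'e} {\`a} une simple propri{\'e}t{\'e} de densit{\'e} grossi{\`e}re de l'orbite $G\xo$, via l'observation que la conjugaison {\em translate} la fonction de d{\'e}placement. Comme chaque $g\in G$ agit par isom{\'e}trie sur $\Es$, on a $d(x,g\rho(s)g^{-1}x)=d(g^{-1}x,\rho(s)g^{-1}x)$ pour tous $x\in\Es$ et $s\in S$, d'o{\`u} $d_{g\cdot\rho}(x)=\dro(g^{-1}x)$, et en particulier $d_{g\cdot\rho}(\xo)=\dro(g^{-1}\xo)$. Il suffira donc de trouver $g\in G$ tel que $g^{-1}\xo$ soit assez proche d'un point o{\`u} $\dro$ prend une valeur strictement inf{\'e}rieure {\`a} $D$, ce qui est possible puisque $\laro=\inf_{x\in\Es}\dro(x)<D$.

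Pour contr{\^o}ler cette approximation, je m'appuierais sur deux bornes uniformes en $\rho$. D'une part, chaque $x\mapsto d(x,\rho(s)x)$ {\'e}tant $2$-lipschitzienne (in{\'e}galit{\'e} triangulaire), la fonction $\dro$, norme euclidienne du vecteur $(d(x,\rho(s)x))_{s\in S}$, est $C$-lipschitzienne avec $C=2\sqrt{\Card(S)}$ ind{\'e}pendant de $\rho$. D'autre part, l'action de $G$ sur $\Es$ {\'e}tant cocompacte (et transitive dans le cas archim{\'e}dien), l'orbite $G\xo$ est $R$-dense pour un r{\'e}el $R\geq 0$ ind{\'e}pendant de $\rho$ (avec $R=0$ dans le cas archim{\'e}dien)~: si $\Omega$ est un compact de $\Es$ avec $G\Omega=\Es$, on prend $R=\sup_{y\in\Omega}d(\xo,y)$.

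La constante $L=CR+1$ conviendrait alors. {\'E}tant donn{\'e} $\rho$ avec $\laro<D$, on choisirait $x_1\in\Es$ tel que $\dro(x_1)<D$, puis, par $R$-densit{\'e}, un $h\in G$ tel que $d(h\xo,x_1)\leq R$. Le contr{\^o}le lipschitzien donnerait $\dro(h\xo)\leq\dro(x_1)+C\,d(h\xo,x_1)\leq\dro(x_1)+CR<D+L$. En posant $g=h^{-1}$, on obtiendrait $d_{g\cdot\rho}(\xo)=\dro(h\xo)<D+L$, soit $g\cdot\rho\in\Rleq{D+L}$. Comme $g\cdot\rho$ et $\rho$ sont conjugu{\'e}es, donc {\'e}quivalentes pour $\sim$, on conclurait $[\rho]=[g\cdot\rho]\in\psep(\Rleq{D+L})=\Xsepleq{D+L}$, qui est compact.

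Le seul point d{\'e}licat, qui n'est pas une v{\'e}ritable difficult{\'e}, est l'uniformit{\'e} de $L$ vis-{\`a}-vis de $\rho$ et de $D$~: elle est automatique d{\`e}s que l'on remarque que ni la constante de Lipschitz $C$ ni le rayon de densit{\'e} $R$ ne d{\'e}pendent de $\rho$ ou de $D$. L'unique ingr{\'e}dient g{\'e}om{\'e}trique est donc la densit{\'e} grossi{\`e}re de l'orbite $G\xo$, fournie par la cocompacit{\'e} de l'action rappel{\'e}e en section \ref{ss- espace associe a un GR}.
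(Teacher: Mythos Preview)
Your proof is correct and is precisely the straightforward argument the paper has in mind: the proposition is stated with a \qed{} and the remark ``On d{\'e}montre facilement la propri{\'e}t{\'e} suivante'', so no proof is given beyond the implicit use of cocompactness of $G$ on $\Es$ (section~\ref{ss- espace associe a un GR}) together with the uniform Lipschitz bound on $\dro$. Your explicit constants $C=2\sqrt{\Card(S)}$ and $L=CR+1$ fill in exactly the details the author leaves to the reader.
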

\begin{rema*}
On peut en fait montrer  que $\la$
est continue sur $\R$ et passe au quotient en une fonction continue
propre sur $\Xsep$, voir \cite[Proposition 25]{ParRepcr}.
 \end{rema*}

\subsection{La d{\'e}monstration du th{\'e}or{\`e}me  de compactification \ref{theo- comp}}
\label{ss- demo theo comp}

On commence par {\'e}tablir une version un peu plus faible, 
qui d{\'e}\-crit le comportement asymptotique  
d'une suite de repr{\'e}sentations partant {\`a} l'infini  dans $\Xsep$.

\begin{theo}
\label{thm- larok tend vers l'infini}
 Soit $(\rok)_{k\in\NN}$
une suite dans $\R$ telle que $\lak=\larok\tend \pinfty$.
Alors, quitte {\`a} extraire, $\lslak \vrok\tend w$ dans $\CGa$, 
avec $w=\vro$ o{\`u} $\rho$ est une action de $\Ga$ sur un immeuble
affine $\Delta$ de type $(\Aa,\Wvect)$, sans point fixe global.
On peut de plus supposer que cette action provient d'une
repr{\'e}sentation $\rho$ non born{\'e}e de $\Ga$ dans
$\Gb(\KKom)\subset\SLn(\KKom)$, o{\`u} $\KKom$ est un corps valu{\'e}
ultram{\'e}trique
et que $\Delta$ est
un sous-immeuble de l'immeuble de Bruhat-Tits de
$\SLn(\KKom)$, pr{\'e}serv{\'e} par $\Gb(\KKom)$.

En particulier, $w$ n'est pas identiquement nul.  
\end{theo}

\begin{preuve} 
Comme $G$ agit cocompactement sur $\Es$, quitte {\`a} conjuguer chaque
repr{\'e}sentation $\rok$, on peut supposer qu'il existe un point $\xk$ de $\Es$,
{\`a} distance born{\'e}e par $M>0$ de $\xo$, tel que $\drok (\xk) \leq \larok
+ {1 \over i}$.
Soit $\om$ un ultrafiltre sur $\NN$, plus fin que le filtre de
Fr{\'e}chet.  On consid{\`e}re le c{\^o}ne asymptotique $\Esom$ de $(\Es, \xo,\ {1
\over \lak}d)$ suivant l'ultrafiltre $\om$ (voir section \ref{ss- prel
conas}). C'est un immeuble affine complet de type $(\Aa,\Wvect)$ (cf. Thm
\ref{theo- cone as = immeuble}).
Pour tout $\ga$ de $\Ga$, on a $d(\xo,\ \rok(\ga)\xo) \leq
\lmotsS{\ga}\drok(\xo) \leq \lmotsS{\ga} (\larok + {1 \over i}+2M\sqrt{|S|})$.
donc ${1 \over \lak}d(\xo,\rok(\ga)\xo)$ est born{\'e} pour $k\in \NN$.
On peut donc consid{\'e}rer l'action asymptotique $\room$ de $\Ga$ sur
$\Esom$, qui est d{\'e}finie par
 $\room(\ga) [\xk]=[\rok(\ga) \xk]$ pour tous $\ga$ de $\Ga$ et
$[\xk]$ de $\Esom$.
Pour tout $\yom=[\yk]\in\Esom$, on a
$\droom(\yom)=\limom\lslak\drok(\yk)$,
donc $\droom(\yom)\geq 1$, avec {\'e}galit{\'e} si $\yom=[\xk]=[\xo]$. 
Donc $\room$ n'a pas de point fixe global dans $\Esom$.
Comme $v$ passe contin{\^u}ment au c{\^o}ne asymptotique (proposition
\ref{prop- passage au cone}), pour tout $\ga$ de $\Ga$, on a $\lslak
v(\rok(\ga))\tendom v(\room(\ga))$ dans $\Cb$.
En particulier 
il existe  une sous-suite extraite de $\lslak \vrok$ convergeant vers
$\vroom$ dans $\CGa$.  
Dans le cas o{\`u} $\KK=\RR$ ou $\CC$, on peut 
supposer
que $G$ est un sous-groupe ferm{\'e} auto-adjoint de $\SLn(\KK)$. 
En particulier $\Es$ est un sous-espace totalement g{\'e}od{\'e}sique passant
par $\xo$ de l'espace sym{\'e}trique $\Esn$ associ{\'e} {\`a} $\SLn(\KK)$.
Dans le cas o{\`u} $\KK$ est ultram{\'e}trique, on peut supposer que $\Gb$ est
un sous-groupe alg{\'e}brique d{\'e}fini sur $\KK$ de $\SLn$, et d'apr{\`e}s le
th{\'e}or{\`e}me de plongement de Landvogt \cite[Thm. 2.2.1]{Landvogt},
l'immeuble de Bruhat-Tits $\Es$ de $\Gb$ sur $\KK$ se plonge par une
isom{\'e}trie {\'e}quivariante par $G$ dans l'immeuble de Bruhat-Tits $\Esn$
de $\SLn$ sur $\KK$. On identifie $\Es$ {\`a} son image dans $\Esn$.

Le c{\^o}ne asymptotique $\Esom$ de $(\Es,\xo, \lslak d)$ suivant l'ultrafiltre
$\om$ est donc un sous-immeuble du c{\^o}ne asymptotique $\Esnom$ de $(\Esn,\xo,
\lslak d)$ suivant l'ultrafiltre $\om$.
D'apr{\`e}s ce qui a {\'e}t{\'e} fait dans la section \ref{s- modele alg de Esom}
(voir le corollaire \ref{coro- isom SLn(KKom)} et la remarque
qui suit, et le th{\'e}or{\`e}me \ref{theo- modele alg Esom}), l'action
asymptotique provient en fait
d'une repr{\'e}sentation
$\room$ de $\Ga$ dans $\Gb(\KKom)\subset\SLn(\KKom)$, o{\`u} $\KKom$ est un corps
valu{\'e}  ultram{\'e}trique
et $\SLn(\KKom)$ agit sur son immeuble de
Bruhat-Tits.
Comme cette action n'a pas de point fixe global dans $\Esom$, donc dans
$\Esnom$, 
qui est un immeuble affine complet (cf. Thm \ref{theo- cone as =
immeuble}), $\room(\Ga)$ n'est pas born{\'e}, et le spectre des longueurs
 $\ell\circ\room$ ne peut {\^e}tre nul \cite[Coro. 3]{ParEll}, donc
$w=\vroom$ n'est pas  nul.
\end{preuve} %
On en d{\'e}duit imm{\'e}diatement la propri{\'e}t{\'e} suivante, qui permet de con\-tr{\^o}\-ler
le spectre des longueurs $\mathcal{L}$ (donc {\'e}galement celui des vecteurs $\V$)
en fonction de $\la$,  {\`a} l'infini de $\Xsep$.

\begin{coro}
\label{coro- controle fin de V hors de XsepD}
Il existe une partie finie $F$ de $\Ga$, un r{\'e}el $c>0$ et un r{\'e}el
$D_0\geq 0$ tels qu'on a la propri{\'e}t{\'e} suivante.

Pour tout $\rho\in\R$, si $\laro\geq D_0$, il existe $\ga\in F$ tel que
$\ell(\rho(\ga)) > c\laro$. 
\end{coro}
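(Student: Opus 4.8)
Le plan est de raisonner par l'absurde en se ramenant directement au th{\'e}or{\`e}me \ref{thm- larok tend vers l'infini}, dont le point essentiel est la \emph{non nullit{\'e}} du spectre limite $w$. Tout le travail g{\'e}om{\'e}trique ayant d{\'e}j{\`a} {\'e}t{\'e} accompli dans ce th{\'e}or{\`e}me, il ne resterait qu'une manipulation de quantificateurs. Le groupe $\Ga$ {\'e}tant d{\'e}nombrable, je commencerais par fixer une {\'e}num{\'e}ration $\Ga=\{\ga_1,\ga_2,\ldots\}$ et poser $F_k=\{\ga_1,\ldots,\ga_k\}$, de sorte que tout {\'e}l{\'e}ment de $\Ga$ appartienne {\`a} $F_k$ d{\`e}s que $k$ est assez grand.

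Supposons la conclusion du corollaire fausse. Sa n{\'e}gation fournit alors, pour tout triplet $(F,c,D_0)$, une repr{\'e}sentation {\'e}chappant {\`a} la borne ; je l'appliquerais successivement aux triplets $(F_k,\frac{1}{k},k)$ pour produire, pour chaque $k$, une repr{\'e}sentation $\rok\in\R$ telle que $\larok\geq k$ et $\ell(\rok(\ga))\leq \frac{1}{k}\larok$ pour tout $\ga\in F_k$. En posant $\lak=\larok$, on aurait $\lak\geq k\tend\pinfty$, ce qui placerait la suite $(\rok)$ exactement dans les hypoth{\`e}ses du th{\'e}or{\`e}me \ref{thm- larok tend vers l'infini}. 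Quitte {\`a} extraire, on obtiendrait donc $\lslak\vrok\tend w$ dans $\CGa$, avec $w$ non identiquement nul.

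Il resterait {\`a} contredire cette non nullit{\'e} en montrant que chaque coordonn{\'e}e $w(\ga)$ est nulle. Pour $\ga\in\Ga$ fix{\'e}, d{\`e}s que $k$ est assez grand on a $\ga\in F_k$, d'o{\`u} $\lslak\ell(\rok(\ga))\leq \frac{1}{k}\tend 0$. Comme $\ell=\norme{v}$ et que la convergence dans $\CGa$ est la convergence coordonn{\'e}e par coordonn{\'e}e, la continuit{\'e} de la norme donnerait $\norme{w(\ga)}=\lim_k\lslak\ell(\rok(\ga))=0$, donc $w(\ga)=0$ pour tout $\ga$, c'est-{\`a}-dire $w=0$ : contradiction.

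La seule chose {\`a} surveiller serait l'agencement des choix diagonaux : il faut que le coefficient $\frac{1}{k}$ tende vers $0$ (et ne reste pas constant) pour forcer l'annulation de chaque $w(\ga)$, et que les parties $F_k$ {\'e}puisent $\Ga$ afin que tout $\ga$ finisse par {\^e}tre contr{\^o}l{\'e}. Aucun obstacle g{\'e}om{\'e}trique ne subsiste, celui-ci {\'e}tant enti{\`e}rement absorb{\'e} par la non nullit{\'e} du spectre limite {\'e}tablie au th{\'e}or{\`e}me \ref{thm- larok tend vers l'infini}.
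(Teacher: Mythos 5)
Votre preuve est correcte et suit essentiellement le même chemin que celle du papier : négation du corollaire appliquée aux triplets $(F_k,\frac{1}{k},k)$ avec les $F_k$ épuisant $\Ga$, puis application du théorème \ref{thm- larok tend vers l'infini} pour obtenir un spectre limite $w$ non nul, contredit par la convergence coordonnée par coordonnée de $\lslak\ell\circ\rok$ vers $0$. Le papier rédige exactement ce même argument (de façon plus condensée), votre version explicitant simplement le passage de $\ell=\norme{v}$ à l'annulation de chaque $w(\ga)$.
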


\begin{preuve}
Soit $F_k$, $k\in\NN^*$ une suite croissante de parties finies
recouvrant $\Ga$. Supposons par l'absurde que pour tout $k\in\NN^*$ non nul, 
il existe $\rok\in\R$ tel que
$\la(\rok)\geq k$ et pour tout
$\ga\in F_k$ on a $\lsla\ell(\rok(\ga))\geq \frac{1}{k}$. 
Alors
$\lsla\ell\circ\rok \tend 0$ dans $\RRp^\Ga$, 
ce qui est impossible par le th{\'e}or{\`e}me \ref{thm- larok tend vers l'infini}.
\end{preuve}

\begin{remas*}
1)  On a donc (pour $\laro\geq D_0$) l'encadrement
$$c\laro<\max_{\ga\in F}\ell(\rho(\ga)) \leq c'\laro$$
o{\`u} $c'=\max_{\ga\in F}\lmotsS{\ga}$ (cf proposition \ref{prop-
    majoration de la lT}).

2) Une autre cons{\'e}quence directe du corollaire
\ref{coro- controle fin de V hors de XsepD}, que nous n'utiliserons pas, mais qui
m{\'e}rite d'{\^e}tre signal{\'e}e, est que
l'application $\V:\Xsep \fleche \CGa$ est propre~ :
en effet, si $Q$ est un compact de $\CGa$, pour $D$ sup{\'e}rieur {\`a} $D_0$
et {\`a} $\frac{1}{c}\max_{\ga\in F,  w\in Q}\norme{w(\ga)}$,
on a que $\la < D$ sur $\V^{-1}(Q)$. 
Donc $\V^{-1}(Q)$ est inclus dans le compact $\Xsepleq {D+L}$
(proposition \ref{prop- lambda borne implique compact}).

\end{remas*}

\begin{preuve}[Fin de la preuve du th{\'e}or{\`e}me \ref{theo- comp}]
  Nous pouvons maintenant montrer l'ex\-is\-ten\-ce du compact $\Ksep$
  tel que $\Xsepo\subset \K$ et $\PV(\Xsep-\Ksep)$ est inclus dans un
  compact.  
Soit $D_0$ donn{\'e} par le corollaire \ref{coro- controle fin de V hors
  de XsepD} et $L$ donn{\'e} par la propostion \ref{prop- lambda borne
  implique compact}. 
On prend alors $\Ksep=\XsepD$ pour $D\geq D_0+L$.
Notons $\Y=\Xsep-\Ksep$ et $\Yhat=\psep^{-1}(\Y)=\R -\psep^{-1}(\Ksep)$.
D'apr{\`e}s le corollaire \ref{coro- controle fin de V hors de XsepD},
comme $\rho\in\Yhat$ implique $\laro \geq D-L \geq D_0$ (d'apr{\`e}s la
proposition \ref{prop- lambda borne implique compact}), on a que
$\lsla\Vhat(\Yhat)$ ne rencontre pas l'ensemble
$\{w\in\CGa\ |\ \forall \ga\in F,\ \norme{w(\ga))} \leq c\}$
qui est un voisinage de $0$ dans $\CGa$ (en particulier $\Ksep$ contient $\V^{-1}(0)$).
Donc l'ad\-h{\'e}\-ren\-ce  $C$  de 
$\lsla\Vhat(\Yhat)$ dans $\CGa$ ne contient pas $0$.
Or $\lsla\Vhat$ est d'image relativement compacte,
car incluse dans le compact 
$\prod_{\ga\in\Ga} 
\{u\in \Cb;\    \norme{u} \leq \lmotsS{\ga}\}$ 
par la proposition \ref{prop- majoration de la lT}.
Donc $C$ est un compact de $\CGao$, et $\PP(C)$ est un compact de
$\PCGa$, contenant
$\PP(\lsla\Vhat(\Yhat))=\PP(\Vhat(\Yhat))
=\PP(\V(\Y))=\PV(\Xsep-\Ksep)$, ce qui
conclut.

La construction (purement topologique) de la compactification de
$\Xsep$ induite par $\PV$ 
a {\'e}t{\'e} d{\'e}crite en d{\'e}tail en section \ref{ss- methode
  compactification}.
Le reste des assertions du th{\'e}or{\`e}me \ref{theo- comp} d{\'e}coule du th{\'e}or{\`e}me
\ref{thm- larok tend vers l'infini}.

Pour  l'action de $\Out(\Ga)$ : comme  l'application 
$\PV:\Xsep - \V^{-1}(0) \fleche \PCGa$  est $\Aut(\Ga)$-{\'e}quivariante, 
 l'action de $\Aut (\Ga)$ sur $\Xsep$  se prolonge continument {\`a}
$\Xsept$ par l'action naturelle sur $\PCGa$.
Comme $\Int(\Gamma)$
agit trivialement sur $\Xsep$, donc sur $\Xsept$, l'action induit une action de $\Out(\Ga)$ sur $\Xsept$ par hom{\'e}omorphismes.
\end{preuve}

\section{Repr{\'e}sentations fid{\`e}les et discr{\`e}tes}
\label{ss- Rfd -ES}

 Soit $\Ga$ un groupe infini de type fini, discret, et $G$ un groupe
 topologique localement compact.
 On dit qu'une repr{\'e}sentation de $\Ga$ dans $G$ est {\em fid{\`e}le et
discr{\`e}te} (fd) si elle injective et d'image discr{\`e}te. 
On note $\RfdGaG$ ou $\Rfd$ l'espace des re\-pr{\'e}\-sen\-ta\-tions
fd de $\Ga$ dans $G$.  

\subsection{L'hypoth{\`e}se (H)}
On dira qu'un groupe abstrait $\Gamma$ {\em v{\'e}rifie l'hypoth{\`e}se \textup{(H)}}
s'il n'admet pas de sous-groupe d'indice fini contenant un sous-groupe
ab{\'e}lien distingu{\'e} infini. 
Cette hypoth{\`e}se n'est pas tr{\`e}s restrictive dans notre cadre, comme le
montrent les exemples suivants.

Si $\Gamma'$ est un groupe commensurable {\`a} $\Gamma$ (c'est-{\`a}-dire si
$\Gamma \cap \Gamma'$ est d'indice fini dans $\Gamma$ et dans
$\Gamma'$), alors $\Gamma$ v{\'e}rifie (H) si et seulement si $\Gamma'$ v{\'e}rifie
(H).
Si $\Gamma$ est un sous-groupe discret Zariski-dense (par exemple un
r{\'e}seau) d'un groupe de Lie lin{\'e}aire semimple $H$, alors $\Gamma$
v{\'e}rifie (H) par \cite[Lemma 1.2]{GoMi}.
Plus g{\'e}\-n{\'e}\-ra\-le\-ment, ce qui suit montre que 
les groupes agissant
proprement, par automorphismes, sans orbite finie au
bord, sur un espace sym{\'e}trique sans facteur compact
et sans facteur euclidien 
 v{\'e}rifient l'hypoth{\`e}se (H). 
 C'est par exemple le cas pour les
  sous-groupes fortement irr{\'e}ductibles de $\SLn(\RR)$ (i.e. tels que
  tous les sous-groupes d'indice fini sont irr{\'e}ductibles).
Les groupes divisant des convexes fournissent beaucoup
d'exemples de tels sous-groupes (voir \cite{Benoist2}).

\begin{prop}
On suppose que $\Es$ est un espace sym{\'e}trique sans facteur compact.
Soit $\Ga$ un groupe agissant proprement par automorphismes sur $\Es$, 
poss{\`e}dant un sous groupe ab{\'e}lien $A$ distingu{\'e} infini.  
Alors il y a deux possibilit{\'e}s.
\begin{enumerate}
\item $A$ poss{\`e}de un {\'e}l{\'e}ment parabolique non trivial. Alors  $\Ga$ admet
  un point fixe global dans $\bordinf\Es$.

\item $A$ n'a que des {\'e}l{\'e}ments semisimples. Alors 
$M=\cap_{a\in A} \Min(a)$ est non vide, et  se d{\'e}compose
de mani{\`e}re canonique en un produit $C\times \RR^k$ (avec $k> 0$
minimal), de telle mani{\`e}re que tous les {\'e}l{\'e}ments $a$ de $A$ op{\`e}rent
sur $M$ comme $(\id,a')$, o{\`u} $a'$ est une translation de $\RR^k$.  
Le groupe $\Ga$ stabilise $M$ et pr{\'e}serve $\bordinf\RR^k$.

\end{enumerate}
En particulier $\Ga$ a une orbite finie dans $\bordinf\Es$.
\end{prop}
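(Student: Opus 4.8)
The plan is to split according to the stated dichotomy, which is genuine since every isometry of the complete proper $\CAT(0)$ space $\Es$ is either semisimple ($\Min\neq\emptyset$) or parabolic.

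First I would dispose of Case 1, where some $a\in A$ is parabolic. Writing the Jordan decomposition $a=heu$ recalled above, parabolicity forces the unipotent part $u\neq\id$: if $u=\id$ then $a=he$ is semisimple, because on $\Min(h)=\Cr\times r$ the commuting elliptic $e$ fixes a point of the $\Cr$-factor while $h$ translates $r$, so $\Min(a)\neq\emptyset$. Let $H=\overline A$ be the Zariski closure of $A$ in $G$; it is abelian, and its set $H_u$ of unipotent elements is a nontrivial closed subgroup, as it contains $u$. Since $A$ is normal in $\Ga$ and conjugation by $\ga\in\Ga$ is an algebraic automorphism, $\Ga$ normalizes $H$ and hence the canonical subgroup $H_u$. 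By the Borel--Tits theorem the normalizer of the nontrivial unipotent group $H_u$ lies in a proper parabolic subgroup $P$ of the semisimple factor $G'$ of $G$. Thus the projection of $\Ga$ to $G'$ lies in $P$; as the Euclidean translations of $G_0$ fix $\bordinf\Es'$ pointwise, all of $\Ga$ fixes the barycenter $\xi\in\bordinf\Es'\subseteq\bordinf\Es$ of the simplex stabilized by $P$. This $\xi$ is the desired global fixed point, yielding the singleton (finite) orbit.

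Next, Case 2: every $a\in A$ is semisimple, so each has trivial unipotent part and $H=\overline A$ consists of semisimple elements, i.e. is a commutative $\RR$-group of multiplicative type whose identity component is an almost-direct product of a compact torus and a split torus. I would first show $M:=\bigcap_{a\in A}\Min(a)\neq\emptyset$: the compact part fixes a nonempty totally geodesic symmetric subspace (Cartan), the split part translates a common flat inside it, and since $\Es$ has finite rank the nested family of convex totally geodesic symmetric subspaces $\bigcap_{a\in A_0}\Min(a)$, over finite $A_0\subseteq A$, has nonincreasing dimension and therefore stabilizes (nested symmetric subspaces of equal dimension coincide); hence finitely many elements already cut out $M$, reducing matters to the flat torus theorem. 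On $M$ each $a$ has constant displacement $\ell(a)$, i.e. acts as a Clifford translation; by the de Rham splitting $M=M_0\times M'$ \cite[II.6.15]{BrHa} and the absence of nontrivial Clifford translations in noncompact type, each $a$ acts trivially on $M'$ and translates the Euclidean factor $M_0$. Taking $\RR^k$ to be the span of the translation vectors $\{a'\}$ and $C$ the complementary factor gives the canonical decomposition $M=C\times\RR^k$ with $a=(\id,a')$; here $k>0$, for otherwise $A$ would fix $M$ pointwise and embed into a point stabilizer, which is finite by properness, contradicting that $A$ is infinite.

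Finally $\Ga$ stabilizes $M$, since $\ga\,\Min(a)=\Min(\ga a\ga^{-1})$ and $a\mapsto\ga a\ga^{-1}$ permutes $A$; as isometries preserve the de Rham factors and $\Ga$ conjugation-permutes the directions $\{a'\}$, the group $\Ga$ preserves the subspace $\RR^k$ and hence $\bordinf\RR^k$. For the finite orbit I would observe that the translation group $\Lambda=\{a':a\in A\}$ is \emph{discrete} in $\RR^k$ (a non-discrete $\Lambda$ would give infinitely many elements of $A$ displacing a point of $M$ by a bounded amount, contradicting properness) and spans $\RR^k$, hence is a full lattice; the linear part of the $\Ga$-action lies in $\Ort(k)$ and preserves $\Lambda$, and $\{g\in\Ort(k):g\Lambda=\Lambda\}$ is finite, so $\Ga$ acts on $\bordinf\RR^k$ through a finite group and every orbit there is finite. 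The main obstacle is the middle step of Case 2, namely establishing that the \emph{infinitely generated} group $A$ has nonempty common minimal set with the precise product structure; the reduction to finitely many elements via finite rank, together with the Clifford-translation rigidity, is the delicate point, whereas Case 1 is comparatively immediate from Borel--Tits.
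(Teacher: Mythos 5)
Your proof is correct, but it follows a genuinely different route from the paper's, whose own proof is essentially two citations plus two short observations: case 1 is exactly Eberlein's Corollary 4.4.5; the nonemptiness of $M$ and its canonical splitting $C\times\RR^k$ with each $a\in A$ acting as $(\id,a')$ is exactly Lemma 7.1 of Ballmann--Gromov--Schroeder; the paper then argues, just as you do, that $k>0$ (otherwise $A$ fixes a point of $M$ and properness makes it finite) and that normality of $A$ forces $\Ga$ to preserve $M$, the splitting, and hence $\bordinf\RR^k$; finally, for the finite orbit, the paper simply notes that $\bordinf\RR^k$ lies in the boundary apartment of a maximal flat, so that any $\Ga$-orbit in it, consisting of points of a single type, has at most $|\Wvect|$ elements. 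Your substitutes are sound. In case 1, you replace Eberlein's geometric statement by Borel--Tits applied to the unipotent part of the Zariski closure of (the projection to the semisimple factor of) $A$; your preliminary reduction -- a parabolic isometry of a symmetric space has nontrivial unipotent Jordan part -- is correct and can be read off from the paper's own preliminaries on faisceaux and the Jordan decomposition; this costs algebraic-group machinery but produces an explicit proper parabolic containing the image of $\Ga$. In case 2 you partially reprove the cited BGS lemma: the reduction of the infinitely generated case to the finitely generated one by stabilization of dimensions of nested complete totally geodesic submanifolds is valid (nested such submanifolds of equal dimension coincide via the exponential map), and since properness passes to the subgroup generated by any finite subset of $A$, the finitely generated case is indeed available from the standard Flat Torus Theorem, so there is no circularity; the splitting then follows, as you say, from Wolf's theorem on Clifford translations together with the invariance of the Euclidean de Rham factor. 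For the finite orbit you use properness a second time to make the translation group $\Lambda=\{a'\}$ a full lattice of $\RR^k$ and invoke finiteness of its stabilizer in $\Ort(k)$; this gives the stronger conclusion that $\Ga$ acts on $\bordinf\RR^k$ through a finite group, whereas the paper's type-and-apartment argument is softer and needs no discreteness of $\Lambda$. Two minor points: your opening appeal in case 2 to the multiplicative-type Zariski closure is dispensable once the dimension argument runs, and ``finite rank'' there should read ``finite dimension''.
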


\begin{preuve}[Preuve de la proposition]
Le premier point
d{\'e}coule de \cite[Corollary 4.4.5]{Ebe}.
Supposons maintenant que $A$ n'a que des {\'e}l{\'e}ments semisimples.
La premi{\`e}re assertion du deuxi{\`e}me point d{\'e}coule de \cite[Lemma 7.1]{BGS}.
Comme $A$ contient au moins un {\'e}l{\'e}ment non elliptique (car sinon $A$
 fixe $M$ non vide donc est fini),
alors $k\neq 0$.
Comme $A$ est distingu{\'e} dans $\Ga$, le groupe $\Ga$ stabilise $M$ et
pr{\'e}serve la d{\'e}composition $M= C\times\RR^k$, donc pr{\'e}serve 
$\bordinf\RR^k$ non vide.
Or $\bordinf\RR^k$ est inclus dans un appartement 
de $\bordinf\Es$.
Le groupe $\Ga$ a donc une orbite finie dans $\bordinf\Es$.  
\end{preuve}

\subsection{Fermeture}
On suppose ici que $\Ga$ v{\'e}rifie l'hypoth{\`e}se $(H)$ et que $G$
est un groupe de Lie r{\'e}el lin{\'e}aire.

\begin{theo}[Goldman-Millson]  
\label{theo- Rfd ferme - espace sym}
L'espace $\Rfd$ des re\-pr{\'e}\-sen\-ta\-tions fi\-d{\`e}\-les et dis\-cr{\`e}\-tes de
$\Ga$ dans $G$ est un ferm{\'e} de l'espace $\R$ des repr{\'e}sentations de
$\Ga$ dans $G$.\end{theo}

\begin{preuve} Si $\Rfd$ est non vide, alors $\Ga$ est lin{\'e}aire donc il
contient un sous-groupe sans torsion $\Ga'$ d'indice fini, d'apr{\`e}s le
lemme de Selberg \cite{Alperin}.  Alors $\Ga'$ n'a pas de sous-groupe
ab{\'e}lien distingu{\'e} non trivial (car il v{\'e}rifie (H) et n'a pas de
sous-groupe fini non trivial), donc, par Goldman-Millson \cite[Lemma
  1.1]{GoMi}, 
l'espace $\Rfd(\Ga',G)$ est ferm{\'e}.

Soit $\rok\in\Rfd$, $k\in\NN$ avec $\rok\tend \rho$ dans $\R$.
La restriction de $\rho$ {\`a} $\Ga'$ est donc fd.
Comme $\ker \rho \cap \Ga' =\{e\}$, on a que $\ker \rho$ est fini.
Or $1\in G$ n'est pas limite d'{\'e}l{\'e}ments d'ordre inf{\'e}rieur {\`a} une
constante donn{\'e}e.
La restriction de $\rok$ {\`a} $\ker \rho$ tend vers la repr{\'e}sentation
triviale, donc est constante {\`a} partir d'un certain rang.  Comme $\rok$
est fid{\`e}le pour tout $k$, on en d{\'e}duit que $\ker \rho$ est trivial,
autrement dit que $\rho$ est fid{\`e}le.
De plus, on voit facilement qu'un sous-groupe de $G$ contenant un
sous-groupe discret d'indice fini est discret, donc $\rho$ est
discr{\`e}te.
\end{preuve}

\subsection{Compactification}
\label{ss- comp Xfd}

 On suppose ici que  $\Ga$ v{\'e}rifie l'hypoth{\`e}se (H), 
 et que $G$ est un groupe r{\'e}ductif r{\'e}el 
(voir section \ref{ss- espace  associe a un GR})
agissant sur son espace sym{\'e}trique associ{\'e} $\Es$.

Comme $\Rfd$ est un ferm{\'e} de $\R$ (Th{\'e}or{\`e}me \ref{theo- Rfd ferme -
  espace sym}), 
l'espace $\Xsepfd=\psep(\Rfd)=(\Rfd\cap\Rcr)/G$ est 
un ferm{\'e} de $\Xsep=\RcrsG$. 
La compactification $\Xsept$ de $\Xsep$ construite pr{\'e}c{\'e}demment
(Th{\'e}or{\`e}me \ref{theo- comp}) induit donc une compactification
(m{\'e}trisable) $\Xsepfdt$ de $\Xsepfd$ (qui s'obtient en prenant
simplement l'adh{\'e}rence de $\Xsepfd$ dans $\Xsept$). Elle est munie
d'une action naturelle de $\Out(\Ga)$ (car $\Xsepfd$ est stable sous
$\Out(\Ga)$, donc son adh{\'e}rence aussi).

En rang $1$, cela redonne (par construction) les compactifications
connues, voir \cite{FLP}, \cite{MoSh}, \cite{Bestvina}, \cite{PauInv}.

Si $x$ est dans $\bordinf \Xsepfdt
\subset\PCGa$, alors
$x=[\vro]$, o{\`u} $\rho$ est une action sans point fixe global de $\Ga$
sur un immeuble affine de type $(\Aa,\Wvect)$, sans point fixe global.
Dans ce cas, F.~Paulin a d{\'e}montr{\'e} que cette action est {\`a}
stabilisateurs de germes d'appartements virtuellement r{\'e}solubles \cite{PauDg}.

\begin{rema*}
En fait, 
on peut construire la compactification de $\Xsepfd$ 
un peu plus directement, car  $\PV$ 
est  d{\'e}finie sur $\Xsepfd$ tout entier, et {\`a} valeurs dans un
compact de $\PCGa$.
En effet, on peut montrer que si $\rho\in\Rfd$ alors
$\ell\circ\rho\neq 0$
(on peut se ramener par
semisimplification {\`a} $\rho$ cr, et cela d{\'e}coule alors du th{\'e}or{\`e}me de
Burnside, voir par exemple \cite{Bass}, \cite{Benoist}).
On peut aussi montrer que la fonction $\la$
reste sup{\'e}rieure {\`a} $\eps>0$ sur $\Rfd$.
\label{rem- lambda non nul sur Rfd}
On montre alors de la m{\^e}me mani{\`e}re qu'en section \ref{ss- demo theo
  comp} que $\lsla\Vhat(\Rfd)$ est d'adh{\'e}rence compacte, incluse
dans $\CGao$.
\end{rema*}

\noindent {\footnotesize  
{\bf Remerciements.} Je remercie Fr{\'e}d{\'e}ric Paulin pour son soutien, 
sa disponibilit{\'e} constante et ses nombreuses suggestions et corrections.
 }

\end{document}